\documentclass[reqno, 12pt]{amsart} 

\usepackage[T1]{fontenc}
\usepackage[expansion=false]{microtype} 
\usepackage{amsfonts,amsthm,amsmath,amssymb,amscd,mathrsfs,mathtools} 
\allowdisplaybreaks
\usepackage{latexsym} 
\usepackage{cite} 
\usepackage[colorlinks=true,linkcolor=blue,citecolor=red,urlcolor=black]{hyperref} 
\usepackage{graphicx}
\usepackage{indentfirst} 
\usepackage{color} 
\usepackage{lmodern} 
\usepackage[pagewise]{lineno} 

\usepackage{enumitem}
\usepackage[nameinlink,noabbrev]{cleveref}

\theoremstyle{plain} 
\newtheorem{Thm}{Theorem}[section] 
\newtheorem{Lem}[Thm]{Lemma}     
\newtheorem{Prop}[Thm]{Proposition}
\newtheorem{Cor}[Thm]{Corollary}

\theoremstyle{definition}
\newtheorem{Def}[Thm]{Definition}

\theoremstyle{remark}
\newtheorem{Rem}[Thm]{Remark}

\numberwithin{equation}{section} 

\newcommand{\beq}{\begin{equation}}
	\newcommand{\eeq}{\end{equation}}
\newcommand{\ben}{\begin{eqnarray}}
	\newcommand{\een}{\end{eqnarray}}
\newcommand{\beno}{\begin{eqnarray*}}
	\newcommand{\eeno}{\end{eqnarray*}}

\usepackage[
    letterpaper,                 
    textheight=8.35in,           
    headsep=20pt,                
    footskip=36pt,               
    marginparwidth=0pt,          
    marginparsep=0pt,            
    left=0.75in, 
    right=0.75in
]{geometry}

\newcommand{\R}{\mathbb R}
\newcommand{\PP}{\mathbb P}
\newcommand{\dd}{\,\mathrm d}
\newcommand{\diver}{\nabla\!\cdot}
\newcommand{\BUC}{\mathrm{BUC}}

\newcommand{\Acal}{\mathcal A}
\newcommand{\Ucal}{\mathcal U}
\newcommand{\Ecal}{\mathcal E}
\DeclareMathOperator{\supp}{supp}
\DeclareMathOperator{\tr}{tr}
\setlist{nosep}
\Crefname{Def}{Definition}{Definitions}
\crefname{Thm}{theorem}{theorems}
\Crefname{Thm}{Theorem}{Theorems}
\crefname{Lem}{lemma}{lemmas}
\Crefname{Lem}{Lemma}{Lemmas}
\crefname{Prop}{proposition}{propositions}
\Crefname{Prop}{Proposition}{Propositions}
\crefname{Cor}{corollary}{corollaries}
\Crefname{Cor}{Corollary}{Corollaries}
\crefname{Rem}{remark}{remarks}
\Crefname{Rem}{Remark}{Remarks}
\hypersetup{
 pdftitle={Global Regularity for the Chemotaxis--Stokes System with Signal Consumption in R3},
 pdfauthor={Fengqiang Shi, Wendong Wang, Guoxu Yang},
 pdfsubject={Three-dimensional chemotaxis with signal consumption and Stokes coupling},
 pdfkeywords={Chemotaxis, Stokes equations, signal consumption, global classical regularity},
 bookmarksnumbered=true
 }

\title[Global Regularity for Chemotaxis--Stokes]{Global Regularity for the
 Chemotaxis--Stokes System with Signal Consumption in $\mathbb{R}^3$}

\author[F. Shi, W. Wang and G. Yang]{
Fengqiang~Shi$^{1}$ \and
Wendong~Wang$^{1}$ \and
Guoxu~Yang$^{1,*}$
}

\thanks{$^{1}$School of Mathematical Sciences, Dalian University of Technology, 
Dalian 116024, China.}

\thanks{$^{*}$Author to whom correspondence should be addressed.}

\thanks{E-mail addresses:
2192593228@mail.dlut.edu.cn (Fengqiang~Shi),
wendong@dlut.edu.cn (Wendong~Wang), 
guoxu\_dlut@outlook.com (Guoxu~Yang)}

\subjclass[2020]{Primary 35K55; Secondary 35B65, 35Q35, 92C17}
\keywords{Chemotaxis, Stokes equations, global classical solution, epsilon regularity}
\date{September 28, 2026}

\begin{document}

\begin{abstract}
In 2005, Tuval et al. introduced a continuum model for the
interaction of aerobic bacteria, oxygen, and an incompressible fluid.
For the associated three-dimensional chemotaxis-consumption systems,
global classical regularity for large data has remained a longstanding
question, even after omitting the nonlinear convection term from the
fluid equation. We answer this question and prove global classical solvability of the
chemotaxis--Stokes Cauchy problem in \(\R^3\), with linear
cell diffusion and bilinear signal consumption, under the following
data assumptions. The initial density and signal are smooth, bounded, and nonnegative,
with finite density mass and second moment and finite signal Fisher
energy; the divergence-free initial velocity belongs to \(H^4\).
No smallness assumption is imposed, and a positive constant signal
background is allowed. The solution remains bounded on every finite
time interval.
The main new ingredient is a localized entropy estimate uniform
under density normalization, even when the resulting consumption
coefficient becomes arbitrarily large. A terminal representation
of the signal separates two complementary mechanisms: consumption
excludes normalized density concentration at positive-signal
points, while a localized weighted estimate improves density
integrability at zero-signal points. Together, these mechanisms
yield decay of a scale-invariant density quantity and allow a
local $\varepsilon$-regularity criterion to be applied without
a separate smallness assumption on the signal gradient.

\end{abstract}

\maketitle

\tableofcontents

\section{Introduction}\label{sec:intro}

\subsection{Chemotaxis--fluid models and aerobic bacteria}

Chemotaxis is the directed movement of cells in response to a chemical
gradient. When the cells are suspended in a fluid, their migration is
coupled to fluid transport, and their density can in turn drive the
fluid through buoyancy. Experiments on swimming bacteria reveal
collective motion, self-concentration, and enhanced transport
\cite{Dombrowski2004}. In 2005, Tuval--Cisneros--Dombrowski--Wolgemuth--Kessler--Goldstein
\cite{Tuval2005} developed a continuum model for aerobic bacteria
swimming toward oxygen and consuming it in a viscous incompressible
fluid. This model motivated the study of chemotaxis--fluid systems
with signal consumption.
The full chemotaxis--Navier--Stokes model takes the form
\begin{equation}\label{eq:NS-background}
\begin{cases}
n_t+u\cdot\nabla n=\Delta n-\nabla\cdot(n\nabla c),\\
c_t+u\cdot\nabla c=\Delta c-nc,\\
u_t+(u\cdot\nabla)u-\Delta u+\nabla\pi=n\nabla\Phi,
 \qquad \nabla\cdot u=0,
\end{cases}
\end{equation}
where \(n\ge0\) is the cell density, \(c\ge0\) the oxygen or signal
concentration, \(u\) the fluid velocity, and \(\pi\) the pressure.
The prescribed potential \(\Phi\) represents the external force.
The drift \(-\nabla\cdot(n\nabla c)\) drives cells toward higher
signal concentrations, whereas \(-nc\) describes consumption.
All diffusion, chemotactic, and consumption coefficients in this
prototype are fixed to one. Omitting the velocity self-advection
\((u\cdot\nabla)u\) gives the nonstationary chemotaxis--Stokes system.
The two transport terms \(u\cdot\nabla n\) and \(u\cdot\nabla c\)
remain, so the coupling between chemotaxis and fluid motion persists.

\subsection{Weak solutions, small-data regularity, and logistic damping}
 Liu--Lorz \cite{LL2011}  proved the global existence of weak solutions to the two-dimensional system  for arbitrarily large initial data.
The regularity  was obtained 
by Chae--Kang--Lee \cite{ChaeKangLee2013} with some conditions in 2D. A basic large-data
result is due to Winkler \cite{Winkler2012}: in bounded convex domains,
the consumption model admits global classical solutions in two
space dimensions, including Navier--Stokes coupling; see also a different proof by Zhang-Zheng in \cite{ZZ2014}. For the
three-dimensional Navier--Stokes system, Winkler subsequently
constructed global weak solutions \cite{Winkler2016} and established
eventual smoothness and stabilization for an appropriate
energy-solution class \cite{Winkler2017}. In the whole space,
Kang--Lee--Winkler \cite{KangLeeWinkler2022} proved global weak
existence in \(\R^3\) under integrability and entropy assumptions.
Chen--Li--Wang--Wang \cite{ChenLiWangWang2025} constructed global
suitable weak solutions for the three-dimensional
chemotaxis--Navier--Stokes equations under their structural and
initial-data assumptions, with a local energy inequality.
These results provide a large-data existence theory, while leaving
the possible occurrence of singularities before the eventual
regularization time unresolved.

The existence of strong solution was established by Duan--Lorz--Markowich
for coupled chemotaxis--fluid equations with small data
in  \cite{DuanLorzMarkowich2010}.
Generally,
smallness of the signal is particularly effective because the signal
is consumed rather than produced. For the fluid-free system, Tao
\cite{Tao2011} proved boundedness under a smallness condition on
\(\|c_0\|_\infty\). Chae--Kang--Lee \cite[Theorem~2]{ChaeKangLee2014} obtained global classical solutions
for the two-dimensional chemotaxis--Navier--Stokes and the
three-dimensional chemotaxis--Stokes Cauchy problems when the initial
signal is sufficiently small, under their regularity hypotheses.
For three-dimensional bounded domains, Cao--Lankeit
\cite{CaoLankeit2016} established global classical small-data
solutions for a chemotaxis--Navier--Stokes system with matrix-valued
sensitivities. A complementary approach studies the size of possible
singular sets. Chen--Li--Wang \cite{ChenLiWang2025} proved
partial regularity at the first blow-up time for three-dimensional
chemotaxis--Navier--Stokes equations using local entropy and
\(\varepsilon\)-regularity methods.

Another way to oppose concentration is to add a logistic source
\(\kappa n-\mu n^2\), where \(\mu>0\). For the consumption system
without fluid, Lankeit--Wang \cite{LankeitWang2017} proved global
bounded classical solvability when \(\mu\) is sufficiently large
relative to the initial signal amplitude and the chemotactic
sensitivity, and global weak solvability for every \(\mu>0\).
For a consumption model coupled to Navier--Stokes, Lankeit
\cite{Lankeit2016} established global weak solutions together with
eventual smoothness and stabilization. Logistic damping has also
been studied for production models: Winkler \cite{Winkler2019}
proved global weak solvability and asymptotic stabilization for a
three-dimensional Keller--Segel--Navier--Stokes system with
\(c_t+u\cdot\nabla c=\Delta c-c+n\). These results concern different
signal mechanisms, and the quadratic damping supplies density
control absent from the undamped system studied here.

\subsection{Keller--Segel concentration and mechanisms of suppression}

The competition between diffusion and chemotactic attraction is
central to Keller--Segel theory, originating in
\cite{KellerSegel1970}. For the normalized two-dimensional
parabolic--elliptic production model on \(\R^2\), the critical mass
\(8\pi\) separates subcritical global existence from supercritical
finite-time blow-up under the usual moment assumptions
in Blanchet--Dolbeault--Perthame \cite{BlanchetDolbeaultPerthame2006}. In dimensions at least three,
Winkler \cite{Winkler2013} proved finite-time blow-up for radial
solutions of the fully parabolic production system in balls.
More recently, Li--Zhou \cite{LiZhou2025} constructed smooth
finite-time blow-up solutions with nonnegative finite-mass density
for a three-dimensional Keller--Segel--Navier--Stokes system with
buoyancy. In their model the signal satisfies the elliptic production
relation \(-\Delta c=n\). Thus neither diffusion nor fluid coupling
by itself gives a general principle excluding chemotactic collapse.

Fluid transport can also suppress concentration. Kiselev--Xu
\cite{KiselevXu2016} proved suppression of chemotactic explosion by
suitable mixing flows. Bedrossian--He \cite{BedrossianHe2017} and \cite{BedrossianHe2018} established suppression
through strong shear flows for parabolic--elliptic
Patlak--Keller--Segel equations, and He \cite{He2018} treated a
parabolic--parabolic system with a strictly monotone shear.
These results use prescribed flows with quantitative mixing or
shear properties. For an active coupling, Hu--Kiselev--Yao
\cite{HuKiselevYao2025} proved global regularity for a
two-dimensional parabolic--elliptic Patlak--Keller--Segel system
coupled through buoyancy to Darcy's law in a periodic strip,
even for arbitrarily small nonzero coupling strength. Recently, Cui-Wang-Wang \cite{CWW2026}, Cui-Wang-Wang-Wei-Yang \cite{CWWWY2026} suppressed the blow-up in the 3D Patlak--Keller--Segel--Navier-Stokes system via Couette flow. Such results
show that the equation determining the fluid, the geometry, and the
signal law all matter in deciding between blow-up and regularity.

For the standard consumption law, the sink \(-nc\) suggests a
different mechanism: a large density depletes the signal responsible
for attraction. Turning this observation into a regularity argument
requires control of spatial gradients and concentration scales;
small signal values at individual points do not directly control
\(\nabla c\). Tao--Winkler \cite{TaoWinkler2012} constructed
large-data weak solutions of the fluid-free consumption system
that become smooth and stabilize in three-dimensional bounded
convex domains. Lankeit--Winkler \cite{LankeitWinkler2023}
surveyed this theory and subsequently established weak solvability
and eventual regularity in general higher-dimensional bounded
domains \cite{LankeitWinkler2025}. The choice of chemotactic
sensitivity is also important.

\subsection{The large-data regularity question}
The gap between global weak existence and global classical
regularity has been explicitly emphasized in the literature.
In the introduction to \cite{Winkler2017}, Winkler discusses the
possibility of finite-time singularities even after removing
\((u\cdot\nabla)u\), as well as for the fluid-free consumption
subsystem, and said that "{\it Even in this simplified setting,
all these solutions constructed so far are merely weak solutions, with widely
unknown boundedness and regularity properties which in fact might be poor so as
to be consistent with several conceivable types of blow-up phenomena}". For the three-dimensional fluid-free  problem,
Jiang--Wu--Zheng \cite{JiangWuZheng2018}
state that "{\it However, when d = 3, to the best of our knowledge, it is still an open problem whether
the classical solution to problem (1.1)–(1.3) exists globally or blows up in finite time with
arbitrary large regular initial data.}"
Their work gives
blow-up criteria; it does not construct a blow-up solution of the
standard consumption system. The same large-data issue is
identified in the survey \cite[Section~2]{LankeitWinkler2023} by  Lankeit--Winkler,
following Theorem~2.2. The introduction to
\cite{LankeitWinkler2025} by    Lankeit--Winkler again explains the role of small initial
signal in the available higher-dimensional classical theory.

These discussions concern several geometries and solution classes.
The question we address here is the following whole-space version:
\begin{quote}
\itshape {\bf Can smooth, arbitrarily large initial data for the
three-dimensional undamped chemotaxis--Stokes consumption system
produce a finite-time singularity?}
\end{quote}
We establish global classical solvability for the Cauchy problem
under the finite-mass, moment, and signal-energy assumptions stated
below. The argument uses the depletion structure of the signal
and the linear Stokes estimates, without a smallness assumption on
the initial signal or density. The corresponding fluid-free
consumption result follows as a special case. 

\subsection{Main results}

Consider the Cauchy problem
\begin{equation}\label{eq:system}
\begin{cases}
n_t+u\cdot\nabla n=\Delta n-\nabla\cdot(n\nabla c),
 &x\in\R^3,\ t>0,\\
c_t+u\cdot\nabla c=\Delta c-nc,
 &x\in\R^3,\ t>0,\\
u_t-\Delta u+\nabla\pi=nF,\qquad \nabla\cdot u=0,
 &x\in\R^3,\ t>0,
\end{cases}
\end{equation}
with \((n,c,u)|_{t=0}=(n_0,c_0,u_0)\), where \(F=\nabla\Phi\) is prescribed.

Fix \(0<\beta<1\). Assume
\begin{align}
&n_0,c_0\in C_b^{2+\beta}(\R^3),\qquad n_0,c_0\ge0,
\label{eq:data-smooth}\\
&M:=\int_{\R^3}n_0<\infty,\qquad
 \int_{\R^3}|x|^2n_0(x)\dd x<\infty,\qquad
 I_0:=2\int_{\R^3}|\nabla\sqrt{c_0}|^2<\infty,
\label{eq:data-integral}\\
&u_0\in H^4(\R^3;\R^3),\qquad \nabla\cdot u_0=0,\qquad
 F=\nabla\Phi\in W^{3,\infty}(\R^3;\R^3).
\label{eq:data-fluid}
\end{align}
Here \(C_b^{2+\beta}\) denotes bounded functions with bounded derivatives
through order two and a globally \(\beta\)-H\"older continuous second
derivative. The derivative in \(I_0\) is understood weakly; no
integrability of \(c_0\) itself is imposed.

We specify the mild-solution class before stating the main theorem.
Let \(\BUC\) denote bounded uniformly continuous functions, and let
\(\BUC^1\) require this property for the function and its first
derivatives. All the following spaces are over \(\R^3\), with
vector-valued spaces for the velocity. Set
\begin{equation}\label{eq:Xspace}
X=(L^1\cap\BUC)\times\BUC^1
\times(L^2\cap\BUC)_\sigma ,
\end{equation}
where the subscript means distributionally divergence-free, and equip
this space with the norm
\begin{equation}\label{eq:Xnorm}
\|(n,c,u)\|_X
=\|n\|_1+\|n\|_\infty+\|c\|_{W^{1,\infty}}
+\|u\|_2+\|u\|_\infty .
\end{equation}
Write \(e^{t\Delta}\) for the heat semigroup on \(\R^3\), acting
componentwise on vector fields, and \(\PP\) for the Helmholtz--Leray
projection onto divergence-free vector fields in \(L^2(\R^3;\R^3)\).

\begin{Def}\label{def:mild}
Let \(0<T\le\infty\) and \((n_0,c_0,u_0)\in X\).
A triple \((n,c,u)\) is a bounded mild solution of
\eqref{eq:system} on \([0,T)\) with these initial data if
\((n,c,u)\in C([0,T'];X)\) for every \(0<T'<T\), it attains
\((n_0,c_0,u_0)\) at \(t=0\), and, for every \(0<t<T\),
\begin{equation}\label{eq:mild-definition}
\begin{aligned}
n(t)&=e^{t\Delta}n_0
-\int_0^t\nabla\cdot e^{(t-s)\Delta}
       [n(\nabla c+u)](s)\dd s,\\
c(t)&=e^{t\Delta}c_0
-\int_0^t e^{(t-s)\Delta}
       [nc+u\cdot\nabla c](s)\dd s,\\
u(t)&=e^{t\Delta}u_0
+\int_0^t e^{(t-s)\Delta}\PP(nF)(s)\dd s.
\end{aligned}
\end{equation}
The integrals are understood in the respective component spaces of
\(X\). In this definition, boundedness is required on each compact
time interval \([0,T']\subset[0,T)\). The solution is called global
if \(T=\infty\). The associated pressure is recovered, up to a
function of time, from
\(\nabla\pi=(I-\PP)(nF)\) in the sense of distributions.
\end{Def}
\begin{Thm}\label{thm:main}
Under \eqref{eq:data-smooth}--\eqref{eq:data-fluid}, system
\eqref{eq:system} with initial data \((n_0,c_0,u_0)\) has a unique
global classical solution in the bounded mild-solution class
defined in \Cref{def:mild}. The pressure is unique up to a function
of time. For all \(t\ge0\),
\begin{equation}\label{eq:mass-max}
n(t)\ge0,\qquad 0\le c(t)\le A:=\|c_0\|_\infty,\qquad
\int_{\R^3}n(t)=M.
\end{equation}
For every finite \(T>0\),
\begin{equation}\label{eq:main-bound}
\sup_{0\le t\le T}\!
\left(\|n(t)\|_\infty+\|c(t)\|_{W^{1,\infty}}
+\|u(t)\|_2+\|u(t)\|_\infty\right)<\infty .
\end{equation}
No smallness condition is imposed on the data.
\end{Thm}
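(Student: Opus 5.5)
We will argue by continuation. Local well-posedness in the bounded mild-solution class is standard: a fixed point for \eqref{eq:mild-definition} in \(C([0,T_0];X)\) uses the heat-kernel bounds \(\|\nabla e^{t\Delta}f\|_p\lesssim t^{-1/2}\|f\|_p\) together with the boundedness of \(\PP\) on \(L^2\) and on the relevant H\"older spaces. Parabolic Schauder theory, applied with the \(C_b^{2+\beta}\) and \(H^4\) data, upgrades the mild solution to a classical one. Uniqueness follows from a Gronwall estimate in \(X\). This produces a maximal time \(T^*\), and \(T^*<\infty\) forces \(\|(n,c,u)(t)\|_X\to\infty\). The comparison principle gives \(n\ge0\) and \(0\le c\le A\), and integration gives conservation of mass; together these yield \eqref{eq:mass-max}. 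The pressure is then recovered from \(\nabla\pi=(I-\PP)(nF)\).

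Next come the a priori bounds on \([0,T]\) with \(T<T^*\). The Stokes part is linear: \(\|nF\|_{L^1\cap L^2}\) controls \(u\) in \(L^2\), and once \(n\) is bounded, \(u\) is bounded in \(W^{1,\infty}\). We use the classical entropy functional
\[
\int n\log n+\tfrac12\int \frac{|\nabla c|^2}{c},
\]
whose derivative is bounded by \(\int|\nabla u||\nabla c|^2/c\) plus good dissipative terms. The second-moment assumption bounds \(\int n\log^- n\), and \(\|u\|_{L^\infty_tW^{1,\infty}}\)-type control is needed only in a bootstrap. Since the Stokes forcing \(nF\) only needs \(n\in L^1\cap L^{p}\), we expect a Gronwall closure giving \(n\log n\in L^\infty_tL^1\) and \(\nabla\sqrt c\in L^\infty_tL^2\) on finite intervals. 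This is, however, supercritical in 3D; \(L\log L\) does not suffice to exclude concentration.

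The central step, and the main obstacle, is to rule out concentration of \(n\) at a hypothetical first singular time \(T^*\). We plan a blow-up/\(\varepsilon\)-regularity argument. First we prove a local \(\varepsilon\)-regularity criterion: if the scale-invariant quantity \(r^{-1}\sup_t\int_{B_r(x_0)}n\) is small on a parabolic cylinder \(Q_r(x_0,t_0)\), then \(n\) is bounded near \((x_0,t_0)\). This uses a local entropy estimate with a cutoff, with \(c\le A\) and \(\nabla\sqrt c\in L^2\) bounding the drift, and the Stokes velocity treated as a lower-order transport term.

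To verify this smallness without assuming \(|\nabla c|\) small, we rescale by \(n_r(x,t)=r^2n(x_0+rx,t_0+r^2t)\) and \(c_r(x,t)=c(x_0+rx,t_0+r^2t)\). The consumption term becomes \(r^{2}\,n\,c\) with coefficient \(1\) in rescaled variables, but after normalizing the density mass on the unit ball, the effective consumption coefficient grows. We therefore need the localized entropy estimate to be uniform in this coefficient, which we regard as the heart of the matter. We then split according to the terminal value \(c(x_0,T^*)\), defined via the mild representation of \(c\) (Duhamel up to \(T^*\)). If \(c(x_0,T^*)>0\), the large consumption coefficient forces the normalized density to vanish in the limit, since concentrated mass would deplete the signal. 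If \(c(x_0,T^*)=0\), we plan a weighted estimate for \(\int n^p c^{-\gamma}\)-type or \(\int n\,\psi(c)\) functionals localized near \(x_0\) to gain integrability of \(n\) beyond \(L^{3/2}\). Either way the scale-invariant quantity tends to \(0\), so \(\varepsilon\)-regularity applies at every point, giving \(n\in L^\infty\) up to \(T^*\). This contradicts blow-up and yields \eqref{eq:main-bound}.
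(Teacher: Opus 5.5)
Your outline has the same architecture as the paper: continuation in \(X\), a global entropy budget, a density-only \(\varepsilon\)-regularity criterion, amplitude normalization with a possibly unbounded consumption coefficient \(\lambda_r\), and a split on the terminal signal value. But the steps you leave as plans are where the proof's content lies, and several would fail as stated.

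\textbf{The velocity and the \(\varepsilon\)-regularity criterion.} First, the Stokes velocity is not lower order. Under \(b_r(y,s)=ru(x_0+ry,T+r^2s)\) one has \(\|b_r\|_{L^5(Q_1)}=\|u\|_{L^5(Q_r)}\) and \(\|\nabla b_r\|_{L^{5/2}(Q_1)}=\|\nabla u\|_{L^{5/2}(Q_r)}\). An energy-level density (\(n\in L^{5/3}_{t,x}\cap L^2_tL^{3/2}_x\)) yields exactly these scale-invariant norms. The rescaled force \(r\lambda_rN_rF\) is not small, since \(\lambda_r\) may blow up. Your claim that \(u\in W^{1,\infty}\) once \(n\) is bounded is circular at \(T^*\). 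You need the \(\dot H^{1/2}\) Stokes estimate of \Cref{lem:fluid}, which gives \(u\in L^\infty_tL^3\cap L^5\), \(\nabla u\in L^{5/2}\) and \(\pi\in L^2_tL^3\), small near \(T\) by absolute continuity, uniformly in \(x_0\). This smallness absorbs the strain \((W\otimes W):\nabla b\) in the normalized entropy uniformly in \(\lambda\), and it supplies \(U\) and \(P\) in \Cref{thm:epsilon}.

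Second, your \(\varepsilon\)-regularity sketch bounds the drift by the global \(\nabla\sqrt c\in L^2\), which is not scale invariant: \(r^{-1}\int_{B_r}|\nabla\sqrt c|^2\) need not be small. The missing idea is the caloric splitting \(c=a+w\) on \(Q_{3/4}\).
\begin{itemize}
\item The caloric part satisfies \(|\nabla a|\le CA\), so it contributes only \(CA^{2p}\kappa^{2p}\) at scale \(\kappa\).
\item The part \(w\) has zero parabolic boundary data and source \(-nc-u\cdot\nabla c\). It is controlled by Dirichlet maximal \(L^p\) regularity and \(\int|\nabla w|^{2p}\le C\|w\|_\infty^p\int|D^2w|^p\), giving \eqref{eq:gradient-reduction}.
\end{itemize}
This mechanism needs a space-time \(L^p\) density input with \(p>1\), and \(p>3/2\) for \Cref{prop:point}. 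Your \(L^\infty_tL^1\) quantity \(r^{-1}\sup_t\int_{B_r}n\) cannot feed it, since maximal regularity fails in \(L^1\).

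\textbf{The terminal signal, the two branches, and uniform continuation.} Third, the terminal value cannot be defined from the heat Duhamel formula, because its source contains the signless, merely critical term \(u\cdot\nabla c\). The paper instead uses the Qian--Xi fundamental solution of \(\partial_t-\Delta+u\cdot\nabla\) with \(u\in L^\infty_tL^3\). Its two-sided Gaussian bounds and conservativity give \(c=H_0-V_0\), with \(H_0\) continuous up to \(T\) and \(V_0\ge0\) lower semicontinuous and \(\le A\) at \(T\). This produces an upper semicontinuous \(c_*\) and strong \(L^a\) convergence of the rescaled signals to the \emph{constant} \(c_*(T,x_0)\). Both facts are indispensable:
\begin{itemize}
\item The constant tangent is what makes \(\iint\lambda_rN_rc_r\varphi=\iint c_r(\varphi_s+\Delta\varphi+b_r\cdot\nabla\varphi)\to0\) force \(c_\star N=0\).
\item Upper semicontinuity is what provides a whole backward cylinder with \(c\le1\) when \(c_*(T,x_0)=0\).
\end{itemize}

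Fourth, \(N_r\rightharpoonup0\) does not make the original scale quantity tend to zero. You first need strong convergence: \(L^1\) convergence by testing with \(1\) and using positivity, then interpolation with the \(\lambda\)-uniform \(L^{5/3}\) gain, which is why \(p<5/3\). With your \(\sup_t\) quantity you would also need convergence uniform in time. Since \(\lambda_r\) may be unbounded, you then need the exact recurrence \(\iint_{Q_{1/2}}N_r^p=2^{-\gamma}\Acal(r/2)/\max\{1,\Acal(r)\}\), which gives \(a_{j+1}\le\frac12\max\{1,a_j\}\).

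Fifth, in the zero branch a weight like \(c^{-\gamma}\) is singular exactly where \(c=0\), and \(\int n\psi(c)\) gains nothing beyond \(L^1\). What works is \(\int\psi N^p\varphi(c)\) with \(p>3/2\) and the bounded weight \(\varphi=(\cos c)^{-(p-1)}\) on \(\{c\le1\}\). Its coefficient \(d=(p-1)(1-p/4)>0\) makes the quadratic form coercive, lets an \(L^\infty_tL^3\) drift of arbitrary size be absorbed, and gives decay like \(\rho^{1/6}\).

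Finally, pointwise \(\varepsilon\)-regularity gives radii that depend on \(x_0\). To restart with a uniform lifespan you must fix \(\rho\) from the fluid norms, then a spatial tail radius from \(\iint n^p<\infty\), and only then take a finite cover of the remaining compact set.
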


\begin{Rem}\label{rem:scope}
The above theorem answers the questions asked by Winkler in \cite{Winkler2017}, Jiang--Wu--Zheng in \cite{JiangWuZheng2018},   Lankeit--Winkler in \cite{LankeitWinkler2023} and so on.
For the three-dimensional Stokes coupling on \(\R^3\),
Chae--Kang--Lee \cite[Theorem~2]{ChaeKangLee2014} establish global
classical solvability under a sufficiently small
\(\|c_0\|_{L^\infty}\), together with their Sobolev-data and
coefficient assumptions. Their admissible coefficients include
the choice of constant sensitivity and linear consumption in
\eqref{eq:system}. Under the present data hypotheses
\eqref{eq:data-smooth}--\eqref{eq:data-fluid},
\Cref{thm:main} removes this initial-signal smallness requirement.
The comparison concerns global classical existence; the finite-mass,
moment, and Fisher assumptions of the present theorem are retained.
The theorem allows a positive constant signal background.
It gives boundedness on each finite time interval; it does not assert
a bound uniform as \(T\to\infty\), nor the algebraic decay asserted
in \cite[Theorem~2]{ChaeKangLee2014} under its own hypotheses.
\end{Rem}

Taking \(F=0\) and \(u_0=0\) gives the fluid-free consequence.

\begin{Cor}\label{cor:ks}
Assume \eqref{eq:data-smooth} and \eqref{eq:data-integral}. The Cauchy problem
\begin{equation}\label{ks:system}
 n_t=\Delta n-\diver(n\nabla c),\qquad
 c_t=\Delta c-nc,\qquad (n,c)|_{t=0}=(n_0,c_0)
\end{equation}
has a unique global nonnegative classical solution.
For every finite $T>0$,
\begin{equation}\label{ks:finite-bound}
 \sup_{0\leq t\leq T}
 \bigl(\|n(t)\|_\infty+\|c(t)\|_{W^{1,\infty}}\bigr)<\infty.
\end{equation}
The mass and signal bounds in \eqref{eq:mass-max} hold.
\end{Cor}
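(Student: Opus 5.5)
The corollary should follow directly from \Cref{thm:main} by specializing the fluid data. I would take \(F=0\) and \(u_0=0\). Both choices are admissible in \eqref{eq:data-fluid}: \(0\in H^4\) is divergence-free, and \(F=\nabla\Phi\) with \(\Phi\equiv0\) lies in \(W^{3,\infty}\). \Cref{thm:main} then gives a unique global classical solution \((n,c,u)\) of \eqref{eq:system} in the bounded mild-solution class of \Cref{def:mild}. It satisfies \eqref{eq:mass-max} and \eqref{eq:main-bound}.

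The first step is to show that the velocity vanishes identically. With \(F=0\) and \(u_0=0\), the third line of \eqref{eq:mild-definition} reads \(u(t)=e^{t\Delta}0+\int_0^t e^{(t-s)\Delta}\PP(0)\dd s=0\) for every \(t\). So \(u\equiv0\), and the pressure satisfies \(\nabla\pi=(I-\PP)(0)=0\), i.e.\ \(\pi\) is a function of time only. Substituting \(u=0\) into the first two equations of \eqref{eq:system} shows that \((n,c)\) is a classical solution of \eqref{ks:system}. It is nonnegative by \eqref{eq:mass-max}. The bound \eqref{ks:finite-bound} is \eqref{eq:main-bound} with the vanishing velocity terms removed, and the mass and signal bounds are inherited verbatim from \eqref{eq:mass-max}.

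The second step is uniqueness, and I expect it to be the only point needing a small argument. Let \((\tilde n,\tilde c)\) be another global nonnegative classical solution of \eqref{ks:system} that is bounded on finite time intervals in the norm \(\|\tilde n\|_1+\|\tilde n\|_\infty+\|\tilde c\|_{W^{1,\infty}}\). The class must be read this way; \Cref{thm:main} asserts uniqueness only within such bounded solutions. Then \((\tilde n,\tilde c,0)\) with constant pressure solves \eqref{eq:system} classically with \(F=0\). A bounded classical solution satisfies the Duhamel formulas \eqref{eq:mild-definition} by the variation-of-constants formula for the heat semigroup on bounded uniformly continuous functions. Hence \((\tilde n,\tilde c,0)\) is a bounded mild solution in the sense of \Cref{def:mild}, and the uniqueness part of \Cref{thm:main} forces \((\tilde n,\tilde c)=(n,c)\).

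The one routine check inside this step is that a classical solution lies in \(C([0,T'];X)\). Time-continuity in \(L^1\cap\BUC\) for \(n\) and in \(\BUC^1\) for \(c\) follows from the same Duhamel representation together with standard heat-semigroup estimates. With that verified, the corollary follows.
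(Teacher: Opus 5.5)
Your proposal follows the paper's proof: specialize \Cref{thm:main} to $F=0$ and $u_0=0$, note that the mild Stokes formula forces $u\equiv0$, and read off nonnegativity, \eqref{ks:finite-bound} and \eqref{eq:mass-max}. The only difference is uniqueness: the paper argues directly from the scalar mild equations with heat-semigroup estimates (an $L^\infty$ Gronwall-type bound, as in the proof of \Cref{cor:stability}), which gives uniqueness in $C([0,T];\BUC\times\BUC^1)$ with no $L^1$ requirement on $n$, whereas your reduction to the uniqueness clause of \Cref{thm:main} is also correct but covers only competitors that are additionally bounded in $L^1$.
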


\begin{Rem}\label{rem:ks-comparison}
Tao \cite{Tao2011} proves global bounded classical solvability for
the Neumann problem on bounded smooth domains under smallness of
the initial signal relative to the chemotactic sensitivity.
For the corresponding whole-space Stokes system, the smallness
condition in \cite[Theorem~2]{ChaeKangLee2014} also applies when
\(F=0\) and \(u_0=0\). Corollary~\ref{cor:ks} gives global
classical solvability of \eqref{ks:system} for arbitrary
\(\|c_0\|_\infty\) under
\eqref{eq:data-smooth}--\eqref{eq:data-integral}, without logistic
damping. The conclusion here is for \(\R^3\); it does not, by
itself, extend the bounded-domain theorem or its uniform-in-time
boundedness conclusion.
\end{Rem}

\begin{Cor}\label{cor:stability}
Under the hypotheses of Corollary~\ref{cor:ks}, let $(n,c)$ be its solution
and let $(n_\mu,c_\mu)$ denote the maximal classical solution with the same
initial data of
\begin{equation}\label{ks:damped}
 \begin{cases}
 (n_\mu)_t=\Delta n_\mu-\diver(n_\mu\nabla c_\mu)-\mu n_\mu^2,\\
 (c_\mu)_t=\Delta c_\mu-n_\mu c_\mu.
 \end{cases}
\end{equation}
For every finite $T>0$, there exist $\mu_T>0$ and $C_T<\infty$ such
that these solutions exist on $[0,T]$ for $0\leq\mu\leq\mu_T$ and
\begin{equation}\label{ks:stability}
 \sup_{0\leq t\leq T}
 \left(\|n_\mu(t)-n(t)\|_\infty
       +\|c_\mu(t)-c(t)\|_{W^{1,\infty}}\right)\leq C_T\mu.
\end{equation}
\end{Cor}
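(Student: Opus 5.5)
The plan is a continuation argument: the damped problem stays close to the undamped global solution from Corollary~\ref{cor:ks} as long as the damped solution is known to exist. Fix $T>0$ and let $(n,c)$ be that global solution. By \eqref{ks:finite-bound} and parabolic Schauder theory it satisfies
\[
K:=\sup_{0\le t\le T}\bigl(\|n(t)\|_\infty+\|c(t)\|_{W^{1,\infty}}\bigr)<\infty .
\]
For the damped system \eqref{ks:damped}, local existence and uniqueness in $(L^1\cap\BUC)\times\BUC^1$ follow from the same fixed-point argument that underlies \Cref{def:mild}, with $u\equiv0$. The extra term $-\mu n_\mu^2$ is locally Lipschitz on bounded sets of $L^1\cap L^\infty$. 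The resulting maximal time $T_\mu$ has the standard extensibility criterion: if $T_\mu<\infty$, then $\|n_\mu\|_\infty+\|c_\mu\|_{W^{1,\infty}}$ is unbounded near $T_\mu$. Also $n_\mu\ge0$, $0\le c_\mu\le A$, and $\int n_\mu\le M$.

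Set $w=n_\mu-n$ and $z=c_\mu-c$. In Duhamel form,
\[
w(t)=-\int_0^t\nabla\cdot e^{(t-s)\Delta}\bigl[w\nabla c_\mu+n\nabla z\bigr]\dd s-\mu\int_0^t e^{(t-s)\Delta}n_\mu^2\dd s,
\]
\[
z(t)=-\int_0^t e^{(t-s)\Delta}\bigl[wc_\mu+nz\bigr]\dd s .
\]
Let $T^*_\mu=\sup\{t<T_\mu\wedge T:\ \|w\|_{L^\infty(0,t)}+\|z\|_{L^\infty(0,t;W^{1,\infty})}\le1\}$. On $[0,T^*_\mu]$ the damped solution is bounded by $K+1$. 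I will estimate the quantity
\[
\phi(t)=\|w(t)\|_\infty+\|z(t)\|_{W^{1,\infty}} .
\]
The heat-kernel bound $\|\nabla e^{\tau\Delta}f\|_\infty\le C\tau^{-1/2}\|f\|_\infty$ gives
\[
\phi(t)\le C_K\int_0^t(t-s)^{-1/2}\phi(s)\dd s+C\mu t(K+1)^2 .
\]
The singular Gronwall lemma then yields $\phi(t)\le C_T\mu$ on $[0,T^*_\mu]$.

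Now choose $\mu_T$ so small that $C_T\mu_T\le\tfrac12$. Continuity of $\phi$ then rules out $T^*_\mu<\min(T_\mu,T)$. The extensibility criterion rules out $T_\mu\le T$, because the solution stays bounded by $K+1$ there. Hence the damped solution exists on $[0,T]$ and \eqref{ks:stability} holds.

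The only delicate point is the $W^{1,\infty}$ bound for $z$: it requires $\nabla e^{\tau\Delta}$ acting on $wc_\mu+nz$. This is handled by the same $\tau^{-1/2}$ kernel estimate, so it introduces no new difficulty. All the real work is contained in Corollary~\ref{cor:ks}.
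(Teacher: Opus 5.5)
Your proposal is correct and follows the paper's proof essentially step for step. Both use a bootstrap keeping $\|n_\mu-n\|_\infty+\|c_\mu-c\|_{W^{1,\infty}}$ below $1$, the same Duhamel formulas for the differences with the $(t-s)^{-1/2}$ heat-gradient bound, a Volterra inequality forced by $\mu(K+1)^2t$, and the extensibility criterion to get $T_\mu>T$; the paper closes the Volterra inequality with an exponential time weight instead of citing the singular Gronwall lemma, and your $K$ comes straight from \eqref{ks:finite-bound} with no Schauder theory needed.

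One small fix: your bootstrap quantity adds two separate suprema, so $\phi\le C_T\mu\le\tfrac12$ only gives the non-strict bound $\le1$, and continuity then cannot push $T^*_\mu$ further. Take $C_T\mu_T\le\tfrac14$, or bootstrap on $\sup\phi<1$ as the paper does.
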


\begin{Rem}\label{rem:damping-scope}
Lankeit--Wang \cite[Theorem~1.1]{LankeitWang2017} obtain global
bounded classical solutions of a bounded-domain consumption model
when the quadratic damping coefficient is sufficiently large
relative to the initial signal and the sensitivity.
\Cref{cor:stability} addresses a different question:
convergence on each prescribed finite time interval as damping
vanishes, after the undamped solution has been constructed by
\Cref{cor:ks}. No initial-signal smallness is used in this
corollary. Its constants may depend on \(T\), and the admissible
range \(0\le\mu\le\mu_T\) may shrink as \(T\) increases.
Thus \eqref{ks:stability} is a consequence of the undamped theorem,
as proved in \Cref{ks:section}; it is not an assumption used to
prove that theorem by a vanishing-damping limit.
\end{Rem}

The local statement needed in the proof is also of independent use.
For \(z_0=(x_0,t_0)\), write
\begin{equation}\label{eq:cylinders}
Q_r(z_0)=B_r(x_0)\times(t_0-r^2,t_0),\qquad
(\pi)_{B_r}(t)=\frac{1}{|B_r|}\int_{B_r(x_0)}\pi(x,t)\dd x .
\end{equation}
Using the cylinders and pressure averages in \eqref{eq:cylinders}, set
\begin{equation}\label{eq:scale-quantities}
\begin{split}
\Acal_p(r;z_0)&=r^{2p-5}\iint_{Q_r(z_0)}n^p,\\
U(r;z_0)&=\|u\|_{L^5(Q_r(z_0))},\\
P(r;z_0)&=\|\pi-(\pi)_{B_r}(t)\|_{L_t^2L_x^3(Q_r(z_0))}.
\end{split}
\end{equation}

\begin{Thm}\label{thm:epsilon}
Let \(A,G<\infty\) and \(3/2<p\le5/3\). There are
\(\varepsilon_*=\varepsilon_*(A,G,p)>0\) and
\(C=C(A,G,p)\) with the following property.
Suppose that \eqref{eq:system} holds classically in the open-top
cylinder \(Q_R(z_0)\), with
\(n>0\), \(0\le c\le A\), and \(R\|F\|_\infty\le G\).
If
\begin{equation}\label{eq:epsilon-hyp}
\Acal_p(R;z_0)+U(R;z_0)+P(R;z_0)\le\varepsilon_*,
\end{equation}
then
\begin{equation}\label{eq:epsilon-conc}
\sup_{Q_{R/2}(z_0)}\left(n+|\nabla c|^2+|u|^2\right)
\le CR^{-2}.
\end{equation}
The constants are independent of any upper bound for \(n\),
any positive lower bound for \(n\), and the distance to the top
time \(t_0\).
\end{Thm}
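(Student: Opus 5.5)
By the scaling $n_\lambda(x,t)=\lambda^2n(x_0+\lambda x,t_0+\lambda^2t)$, $c_\lambda(x,t)=c(x_0+\lambda x,t_0+\lambda^2 t)$, $u_\lambda=\lambda u(\cdot)$, $\pi_\lambda=\lambda^2\pi(\cdot)$, $F_\lambda=\lambda F(x_0+\lambda\,\cdot)$, the system \eqref{eq:system} is invariant. The quantities $\Acal_p,U,P$, the bound $0\le c\le A$, the quantity $R\|F\|_\infty$ and the conclusion \eqref{eq:epsilon-conc} are all scale invariant. So I first reduce to $R=1$, $z_0=0$.

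The plan is a Campanato-type decay iteration for the scale-invariant functional
\[
\Ecal(r)=\Acal_p(r)+U(r)+P(r)+r^{\gamma}\,\| \nabla c\|_{L^{q}(Q_r)}\quad(\text{suitably normalised}),
\]
followed by a bootstrap. The key point is that no smallness of $\nabla c$ is assumed. I would therefore split, on each cylinder $Q_r$, $c=h+w$. Here $h$ solves $h_t-\Delta h=0$ with boundary/initial values of $c$ on the parabolic boundary of $Q_r$ (after a cutoff), and $w$ collects the Duhamel contributions of $-u\cdot\nabla c-nc$.

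Since $0\le h\le A$, interior caloric estimates give $\sup_{Q_{r/2}}|\nabla h|\le CA/r$. Hence on $Q_{\theta r}$ the rescaled drift satisfies $\theta r|\nabla h|\le CA\theta$, which is small once $\theta=\theta(A)$ is fixed small. This is where boundedness of $c$ substitutes for smallness of the gradient. For $w$, the source $nc$ is bounded by $A n$, which is small in $L^p$ with $p>3/2$. The advection $u\cdot\nabla c$ is small because $U$ is small and $L^5$ is critical for $u$. Maximal parabolic regularity then gives $\nabla w$ small in $L^{q}$ for $q=5p/(5-p)$ (the parabolic Sobolev exponent, $>3$ precisely because $p>3/2$), after absorbing the $u\cdot\nabla w$ term using smallness of $U$.

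With the drift $b=\nabla c+u$ thus small at scale $\theta r$ in a supercritical sense, I would prove the decay step by testing the $n$-equation with $n^{p-1}\varphi$. This yields a local $L^p$-energy inequality
\[
\sup_t\int n^p\varphi+\iint|\nabla n^{p/2}|^2\varphi\lesssim \iint n^p(|\nabla c|^2+|u|^2)\varphi+\iint n^p(|\varphi_t|+|\nabla\varphi|^2),
\]
where $n>0$ is used so that $n^{p-1}$ is a legitimate test function. The first term on the right is absorbed by Sobolev and the smallness of $b$. Comparing with a solution of the heat equation then gives $\Acal_p(\theta r)\le C\theta^{\alpha}\Acal_p(r)+C\,\Ecal(r)^{1+\delta}$.

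The velocity and pressure parts decay by standard Stokes $\varepsilon$-regularity arguments. Local energy for $u$ with forcing $nF$ is controlled via $\|F\|_\infty r\le G$ and $\Acal_p$. The pressure splits into a harmonic part, which has Campanato decay, and a Calderón–Zygmund part bounded by $\|nF\|$ and $u$.

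Choosing $\theta$ and then $\varepsilon_*$ small gives $\Ecal(\theta r)\le\frac12\Ecal(r)$ at all centres in $Q_{1/2}$ and all scales. This yields the Morrey bound $r^{2p-5}\iint_{Q_r(z)}n^p\le Cr^{\alpha}$. The Morrey bound places $nc$ in a parabolic Morrey space beyond the critical one, so $\nabla c$ is Hölder. Then $n$ has a bounded drift, and De Giorgi/Moser for the $n$-equation gives $n\le C$. Finally Stokes regularity gives $|u|\le C$ on $Q_{1/2}$, which is \eqref{eq:epsilon-conc} at $R=1$.

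The constants depend only on $A,G,p$: the iteration uses only the scale-invariant quantities and never an a priori bound on $n$ or the distance to $t_0$, since all cylinders are backward.

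The step I expect to be the main obstacle is the decay inequality for $\Acal_p$ with only $p\le5/3$ integrability. In this range $nc$ is not a subcritical source, so the splitting $c=h+w$ must be done carefully so that the $w$-gradient is controlled nonlinearly, $\|\nabla w\|\lesssim\Acal_p^{1/p}$, with no loss of scale. If the direct splitting fails there, I would first try a contradiction/compactness version of the one-step decay: normalise by $\Ecal$, pass to a limit which solves a linear heat equation with drift $\nabla h$ where $h$ is caloric and bounded, and read off the decay from that limit.
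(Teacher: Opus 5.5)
There is a genuine gap, and it is at the step you flag as the main obstacle.

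\textbf{The drift is not small in a critical norm.} First, the exponent claim is wrong. One has $5p/(5-p)>3$ only for $p>15/8$, so for $3/2<p\le5/3$ the maximal-regularity gradient exponent lies in $(15/7,5/2]$. The exponent that exceeds $3$ exactly when $p>3/2$ is $2p$. It comes from $D^2w\in L^p$ together with $|w|\le A$, via $\|\nabla w\|_{2p}^{2p}\le C\|w\|_\infty^p\|D^2w\|_p^p$, as in \eqref{eq:zero-trace-gradient}--\eqref{eq:w-small}.

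More importantly, even $L^{2p}$ with $2p\le10/3$ is below the critical drift space $L^5_{t,x}$. In your $L^p$-energy inequality, the left side controls $n^p$ only in $L^{5/3}_{t,x}$ (parabolic Sobolev applied to $n^{p/2}$). Absorbing $\iint n^p|\nabla c|^2\varphi$ therefore needs $\|\nabla w\|_{L^5}$ small, and smallness of $\Acal_p+U$ does not give this.

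\textbf{Why the decay step does not close.} You could feed $n\in L^{5p/3}$ back into maximal regularity, which would put the $nc$-part of $\nabla w$ into $L^{10p/3}\subset L^5$. But the resulting term is superlinear in the energy, and it needs $n$ on a larger cylinder than the one the cutoff energy controls. Your one-step inequality $\Acal_p(\theta r)\le C\theta^\alpha\Acal_p(r)+C\Ecal(r)^{1+\delta}$ assumes this cross-scale circularity away rather than deriving it. The compactness fallback meets the same obstruction, because strong $L^p$ compactness of normalized densities needs a uniform local energy bound. The pressure step has the same missing input: the Calder\'on--Zygmund part is controlled in $L^2_tL^3_x$ by $\|n\|_{L^2_tL^{3/2}_x}$, which does not follow from smallness of $\Acal_p$ alone.

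\textbf{The missing idea: use the consumption structure.} Do not treat $\nabla c$ as an external drift. Take $v=\sqrt{1+c}$, $h=\nabla v$, $H=n\log n$ as in \eqref{eq:local-entropy-variables}, and pair the density entropy with $2|h|^2$. The cross term $\nabla n\cdot\nabla c$ then cancels up to $2h\cdot\nabla n/v$, which Young's inequality absorbs; this gives \eqref{eq:local-entropy}.

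The joint local inequality \eqref{eq:joint-local} then contains only fluxes such as $|H||h|$, $n|h|$, $|h|^3$, $|h|^2|u|$, a force term, and a signed pressure term. All of these are controlled by smallness of $S=\iint_{Q_1}(n^{q/2}+|h|^q+|u|^q)$ with $q=2p\in(3,10/3]$, so no critical norm of $\nabla c$ is needed. A CKN-type backward-heat-kernel induction, with the pressure decomposition and the weighted-mass control of the negative entropy, then gives the point bound of \Cref{prop:point}.

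\textbf{Where your splitting fits.} Your splitting $c=a+w$ with $|\nabla a|\le CA$ is the paper's device too, but its only role is the following. Through \eqref{eq:gradient-reduction} it shows that $\kappa^{2p-5}\iint_{Q_\kappa}|\nabla\sqrt{1+c}|^{2p}$ is small at one fixed small scale $\kappa$, which supplies the $|h|^q$ part of $S$. Even this needs the interior bound \eqref{eq:interior-MR} and \eqref{eq:c-gradient-L2p} first. Those bounds place $u\cdot\nabla c$ in $L^p$ on the whole splitting cylinder, including near its boundary, where your caloric part has no bounded gradient.
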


\begin{Rem}\label{rem:epsilon-comparison}
The local entropy calculation in \Cref{sec:epsilon} is related to
Chen--Li--Wang \cite[Lemma~15 and Theorem~7]{ChenLiWang2025}.
For the construction of suitable weak solutions satisfying a
local energy inequality, see Chen--Li--Wang--Wang
\cite{ChenLiWangWang2025}.
Here the fluid equation is Stokes, the pressure oscillation is
measured in \(L_t^2L_x^3\), and the signal-gradient input is
derived in \eqref{eq:gradient-reduction} from the density and
velocity inputs. Consequently \eqref{eq:epsilon-hyp} contains no
separate smallness assumption on \(\nabla c\). It is a local
smallness criterion at an arbitrary bounded signal amplitude
\(A\), not a smallness assumption on \(c_0\). The strict range
\(p>3/2\) in \Cref{thm:epsilon} is retained throughout the proof.
\end{Rem}

\subsection{Main ideas and organization of the proof}

Suppose that the maximal classical existence time is finite, and
denote it by \(T\). The global energy estimate gives
\(n\in L^1_{t,x}\cap L^{5/3}_{t,x}\) on \((0,T)\), but this
does not by itself make the scale quantity
\(\Acal_p(r;x_0,T)=r^{2p-5}\iint_{Q_r(x_0,T)}n^p\) small.
The factor \(r^{2p-5}\) is singular. We must obtain decay of the
normalized integral, rather than just absolute continuity of its
unscaled numerator. For the concentration argument we fix
\begin{equation}\label{eq:fixed-p}
p=\frac{19}{12},\qquad p'=\frac{19}{7},\qquad
\gamma=5-2p=\frac{11}{6}.
\end{equation}
The two strict inequalities \(3/2<p<5/3\) serve different purposes:
the first permits local regularity, while the second permits strong
\(L^p\) convergence by interpolation with the entropy gain.
The proof has four principal ingredients.

\smallskip
\noindent\textbf{A local criterion with no separate signal-gradient input.}
The local entropy calculation builds on the shifted-signal method of
Chen--Li--Wang \cite[Lemma~15]{ChenLiWang2025}. We retain the density
equation alongside the joint entropy inequality: the weighted mass
estimate \eqref{eq:weighted-mass} controls the negative entropy on
the entire support of the test, before any restriction to a smaller
ball. A pressure decomposition adapted to the linear Stokes forcing
then gives the summable estimate \eqref{eq:pressure-flux} and closes
the backward-kernel induction \eqref{eq:induction}. The additional
step needed for the present concentration argument is the heat
decomposition leading to \eqref{eq:gradient-reduction}. {\bf A new observation} is that it recovers
the required small signal-gradient input from the density and
velocity inputs. Thus \Cref{thm:epsilon} requires only
\(\Acal_p+U+P\), applies on open-top cylinders, and has constants
independent of a preterminal classical bound. The strong Stokes
norms in \eqref{eq:u-integrable} make its fluid inputs small near
\(T\), uniformly in the spatial center.

\smallskip
\noindent\textbf{The central estimate: entropy uniform under density normalization.}
At a candidate terminal point, set
\(\lambda_r=\max\{1,\Acal_p(r;x_0,T)^{1/p}\}\) and
\(N_r=n_r/\lambda_r\), where \(n_r\) is the parabolically rescaled
density. This bounds \(N_r\) in \(L^p(Q_1)\), but changes the
signal consumption to \(-\lambda_rN_rc_r\); the amplitudes
\(\lambda_r\) need not be bounded. The main quantitative estimate
is \eqref{eq:normalized-gain}, obtained by the absorption inequality
\eqref{eq:Y-absorb}. It gives a local \(L^{5/3}\) bound for the
normalized density with a constant independent of \(\lambda_r\).
The reason for this uniformity is the exact cancellation in
\eqref{eq:normalized-identity}: weighting the shifted signal Fisher
energy by \(\lambda_r^{-1}\) balances the density entropy. The
remaining strain is controlled by the small, scale-invariant norms
\(\|u_r\|_5^2+\|\nabla u_r\|_{5/2}\). This is the compactness
estimate that prevents a potentially unbounded amplitude from
invalidating the positive-signal argument.

\smallskip
\noindent\textbf{A terminal signal value constructed before regularity is known.}
It would be circular to assume continuity of \(c\) at the proposed
singular time. Instead, the critical bound
\(u\in L^\infty_tL^3_x\) permits the Gaussian kernel theory of
Qian--Xi \cite{QianXi2019}. In the signal representation, positivity
and boundedness of \(c\) imply that the nonnegative absorption
potential is finite at every terminal point; see
\eqref{eq:potential-finite}. Subtracting this lower semicontinuous
potential from the continuous homogeneous evolution defines an
upper semicontinuous representative \(c_*\). The finite potential
tail, together with both Gaussian bounds, gives
\eqref{eq:constant-tangent}: every backward parabolic rescaling
converges strongly in each finite \(L^a\) space to the constant
\(c_*(T,x_0)\). This conclusion is pointwise in the center, not
merely an almost-everywhere terminal-trace statement, and requires
no bound for the density at \(T\).

\smallskip
\noindent\textbf{Two depletion mechanisms and a uniform continuation time.}
If \(c_*(T,x_0)>0\), the rescaled signal equation annihilates every
weak limit of \(N_r\), which is like the singular points of Type II.  {\bf A key observation} is positivity of $N_r$ gives strong \(L^1\) convergence,
and the uniform \(L^{5/3}\) gain upgrades it to strong \(L^p\)
convergence. The exact recurrence \eqref{eq:exact-recurrence}, rather
than a direct removal of \(\lambda_r\), then proves
\(\Acal_p(r;x_0,T)\to0\). If \(c_*(T,x_0)=0\), upper
semicontinuity supplies a whole backward cylinder with small
signal. The cosine-weight identity \eqref{eq:cos-identity} yields
the quantitative decay \eqref{eq:zero-rate}; the critical transport
norm need only be finite in this branch. These two estimates cover
all terminal points. Finally, choosing a fixed radius before
taking a spatial tail, and then covering the remaining compact set,
gives the uniform bound \eqref{eq:global-top-bound}. The solution
can therefore be restarted strictly before \(T\) with a common
lifespan, without assuming the existence of classical data at \(T\).

The organization follows these dependencies. Section~\ref{sec:energy}
states local well-posedness and proves the global budgets; the
detailed local construction is in Appendix~\ref{app:local}.
Section~\ref{sec:epsilon} proves the local regularity criterion.
Section~\ref{sec:terminal-normalization} develops the terminal
signal and the uniform normalized entropy, and
Section~\ref{sec:concentration} treats the positive- and zero-signal
branches. Section~\ref{sec:global} proves the main theorem, and
Section~\ref{ks:section} gives the fluid-free and vanishing-damping
consequences. The damping limit is derived after undamped
regularity has been established and is not used to prove it.

Write \(\|\cdot\|_q\) for spatial \(L^q(\R^3)\) norms when
the domain is clear. A cylinder norm without separate space and
time indices is a space-time norm, and \(Q_r=Q_r(0,0)\).
Integrals without a specified domain are over \(\R^3\), or over
the support of the displayed test in a local calculation.
The letters used for shifted signal variables are local to their
respective calculations. Constants \(C\) may change from line to
line; dependence on fixed parameters is indicated when needed.

\section{Local theory and global energy estimates}\label{sec:energy}

\subsection{Local construction and continuation}\label{sec:local}


\begin{Lem}\label{lem:local}
For data in \(X\), the mild system associated with
\eqref{eq:system} has a unique solution in \(C([0,h];X)\)
for some \(h>0\). If the data norm is at most \(M_*\), \(h\)
can be chosen depending only on \(M_*\) and \(\|F\|_\infty\).
Under \eqref{eq:data-smooth} and \eqref{eq:data-fluid}, the solution
is classical. Its maximal existence time \(T_{\max}\) satisfies
\begin{equation}\label{eq:continuation}
T_{\max}<\infty\quad\Longrightarrow\quad
\limsup_{t\uparrow T_{\max}}\|(n,c,u)(t)\|_X=\infty .
\end{equation}
\end{Lem}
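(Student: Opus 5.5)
We will prove the lemma by a Banach fixed point argument for the Duhamel map defined by the right-hand side of \eqref{eq:mild-definition}, posed on the closed ball of radius \(2M_*\) in \(C([0,h];X)\). The inputs are the standard heat-semigroup bounds on \(\R^3\):
\[
\|e^{t\Delta}f\|_q\le\|f\|_q,\qquad
\|\nabla e^{t\Delta}f\|_q\le Ct^{-1/2}\|f\|_q,\qquad 1\le q\le\infty,
\]
together with the fact that \(e^{t\Delta}\) preserves \(\BUC\), \(\BUC^1\) and \(L^1\), and is strongly continuous on these spaces.

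We estimate each component in turn.
\begin{itemize}
\item \emph{Density.} Put both \(L^1\) and \(L^\infty\) on \(\nabla\cdot e^{(t-s)\Delta}[n(\nabla c+u)]\). This gives a bound \(C\sqrt h\,\|n\|_{L^1\cap L^\infty}(\|\nabla c\|_\infty+\|u\|_\infty)\).
\item \emph{Signal.} For \(c\) in \(W^{1,\infty}\), the term \(nc\) is estimated directly. For the gradient we use \(\nabla e^{(t-s)\Delta}[nc+u\cdot\nabla c]\), with an \(L^\infty\) bound of order \((t-s)^{-1/2}\). This yields \(C\sqrt h\) times a quadratic expression in the norm.
\item \emph{Velocity.} The map \(e^{t\Delta}\PP\) is bounded on \(L^2\). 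For \(L^\infty\) we write \(e^{(t-s)\Delta}\PP(nF)\) as an Oseen-type kernel acting on \(nF\); that kernel is integrable, with \(\|e^{\tau\Delta}\PP\|_{L^1\cap L^\infty\to L^\infty}\lesssim 1\). Alternatively, we interpolate \(nF\in L^1\cap L^\infty\subset L^q\) for some \(q>3\) and use \(\|e^{\tau\Delta}\PP f\|_\infty\le C\tau^{-3/(2q)}\|f\|_q\). Either route gives a factor \(h^{1-3/(2q)}\|F\|_\infty\|n\|_{L^1\cap L^\infty}\).
\end{itemize}
Since all nonlinearities are bilinear, the differences satisfy the same estimates. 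Hence for \(h=h(M_*,\|F\|_\infty)\) small, the map sends the ball into itself and is a contraction. Continuity in time with values in \(X\), including at \(t=0\), follows from the strong continuity of the semigroup on the BUC spaces and from the integrable singularities. Uniqueness in all of \(C([0,h];X)\) follows from the same difference estimate combined with a Gronwall/bootstrap on short subintervals.

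The continuation criterion \eqref{eq:continuation} is then immediate from the uniform lifespan. Suppose \(\|(n,c,u)(t)\|_X\le K\) on \([0,T_{\max})\). Restarting at \(t_0=T_{\max}-h(K,\|F\|_\infty)/2\) extends the solution past \(T_{\max}\), using uniqueness to glue. This contradicts maximality.

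\textbf{Classical regularity (the main obstacle).} Under \eqref{eq:data-smooth} and \eqref{eq:data-fluid}, we bootstrap to show that the mild solution is classical. First, from the mild formulas we obtain Hölder continuity: \(\nabla c\), \(n\), and \(u\) are \(C^{\theta,\theta/2}\) locally in \((0,h]\), and up to \(t=0\) using the regularity of the data.
\begin{itemize}
\item The Stokes part is linear with forcing \(nF\), \(F\in W^{3,\infty}\), and \(u_0\in H^4\subset C^{2+\beta}\). Parabolic Schauder theory for the heat equation applied to \(\PP(nF)\) then gives \(u\in C^{2+\theta,1+\theta/2}\), once \(nF\) is Hölder. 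The Hölder regularity of \(\PP(nF)\) requires a Calderón–Zygmund/Schauder estimate for \(\PP\) on Hölder–\(L^2\) functions; this is where \(F=\nabla\Phi\) and the \(L^1\) bound on \(n\) are used.
\item Next we treat the \(c\)-equation as linear heat with Hölder coefficients and source, which gives \(c\in C^{2+\theta,1+\theta/2}\).
\item Then we write the \(n\)-equation in nondivergence form, \(n_t-\Delta n+(u+\nabla c)\cdot\nabla n+n\Delta c=0\), which gives \(n\in C^{2+\theta,1+\theta/2}\).
\end{itemize}
Each step needs the previous one at a slightly better Hölder level, which is why the bootstrap, and not the contraction, is the delicate part.

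Once the solution is classical, nonnegativity of \(n\) and \(c\), and the bound \(c\le\|c_0\|_\infty\), follow from the maximum principle. These properties keep the local theory consistent with \Cref{thm:main}.
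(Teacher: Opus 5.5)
Your proposal is correct and is essentially the paper's argument: a Banach fixed point in \(C([0,h];X)\) built on the \(L^1/L^\infty\) heat bounds with the \(s^{-1/2}\) gradient singularity, an \(L^q\to L^\infty\) smoothing bound for \(e^{s\Delta}\PP\) applied to \(nF\in L^1\cap L^\infty\) (the paper takes \(q=2\), giving \(s^{-3/4}\)), a restart with lifespan depending only on the \(X\) norm for \eqref{eq:continuation}, and a Schauder bootstrap for classical regularity. Only your first velocity route is inaccurate: the Oseen kernel decays only like \((|x|+\sqrt s)^{-3}\), so it is not integrable, and \(e^{s\Delta}\PP\) is not uniformly bounded from \(L^1\cap L^\infty\) to \(L^\infty\) (there is a \(\log(1/s)\) loss, which is still time-integrable); your \(L^q\) alternative is exactly right, and in the bootstrap it is cleaner to obtain \(\nabla n\) first from divergence-form Schauder estimates, as the paper does, before rewriting the density equation in nondivergence form.
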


The proof is given in Appendix~\ref{app:local}. It includes the
componentwise heat and Stokes estimates, the difference estimates
for the contraction, and the continuation argument. The dependence
of the existence time only on the \(X\) norm will be used at the
end of the proof; higher derivatives of the restarting data are
not required to be uniformly bounded.
For nonnegative data, the maximum principle and the \(L^1\) mild
density equation give \eqref{eq:mass-max} on
\([0,T_{\max})\). If \(n_0\not\equiv0\), the strong maximum
principle gives \(n(x,t)>0\) for every \(t>0\). Thus logarithmic
tests can be made on strictly positive-time intervals; the
initial-time approximation is detailed below.

\subsection{Shifted entropy, kinetic energy, and the first moment}
Replace \(c\) by \(v=c+1/4\) in the denominator of the Fisher energy so that no positive lower bound for \(c\) is needed. We also include the first moment of \(n\), which controls the negative part of \(\int n\log n\) on \(\mathbb R^3\).
Fix \(\tau<T_{\max}\), and put
\begin{equation}\label{eq:global-shift}
\delta=\frac14,\qquad v=c+\delta,\qquad V=A+\delta .
\end{equation}
Introduce the energy and dissipation quantities
\begin{equation}\label{eq:global-energy-quantities}
\begin{aligned}
H&=\int n\log n,&
I&=\frac12\int\frac{|\nabla c|^2}{v},&
m&=\int\langle x\rangle n,\qquad \langle x\rangle=(1+|x|^2)^{1/2},\\
D_n&=\int\frac{|\nabla n|^2}{n},&
D_c&=\int v|D^2\log v|^2,&
J&=\int n\frac{|\nabla c|^2}{v},\qquad
D_u=\int|\nabla u|^2.
\end{aligned}
\end{equation}
Zero density is interpreted by approximation.

\begin{Lem}\label{lem:energy}
For every finite \(T\), there is \(C_T<\infty\), depending only
on the data, \(F\), and \(T\), such that
\begin{equation}\label{eq:energy-budget}
\begin{split}
&\sup_{0<t<\min\{T,T_{\max}\}}
\left(\int n\log_+n+I+\|u\|_2^2+m\right)\\
&\hspace{12mm}+
\int_0^{\min\{T,T_{\max}\}}(D_n+D_c+J+D_u)\dd t
\le C_T.
\end{split}
\end{equation}
The same constant, enlarged if necessary, also bounds
\begin{equation}\label{eq:global-signal-norms}
\begin{split}
&\sup_{0<t<\min\{T,T_{\max}\}}\|\nabla c(t)\|_2^2\\
&\quad+\int_0^{\min\{T,T_{\max}\}}\!
\left(\|D^2c\|_2^2+\|\nabla c\|_4^4
      +\int n|\nabla c|^2\right)\dd t\le C_T.
\end{split}
\end{equation}
The constants contain no classical norm of the solution at the
upper endpoint. These are gradient estimates; no finite
\(L^2(\R^3)\) norm of \(c\) is asserted or needed.
\end{Lem}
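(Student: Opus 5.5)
Compute the time derivative of the shifted entropy functional
\[
\mathcal F(t)=H+2I=\int n\log n+\int\frac{|\nabla c|^2}{v},
\]
with $v=c+\delta$, following the standard Winkler/Tao--Winkler identity for the consumption model. The plan is to do this on $[\varepsilon,\tau]$, where $n>0$, and let $\varepsilon\downarrow0$ at the end. Since $u$ is divergence free, the transport terms in the $n$-entropy cancel exactly. Testing the density equation with $\log n$ gives
\[
\frac{d}{dt}H+D_n=\int\nabla n\cdot\nabla c .
\]
For the signal, I write $\frac{|\nabla c|^2}{v}=4|\nabla\sqrt v|^2$ and use the equation $v_t+u\cdot\nabla v=\Delta v-n(v-\delta)$. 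The standard Bochner-type computation (Winkler's identity for $\int|\nabla v|^2/v$) then gives
\[
\frac{d}{dt}\int\frac{|\nabla c|^2}{v}+2D_c
\le -2\int\nabla n\cdot\nabla c+\int n\frac{(v-\delta)|\nabla c|^2}{v^2}-\int\frac{\nabla u:\nabla v\otimes\nabla v}{v}\cdot2+\dots
\]
I will have to check the constants. The cross terms $\pm\int\nabla n\cdot\nabla c$ cancel when one forms $H+\frac12\int|\nabla c|^2/v$ with the correct weight.

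The consumption term has a sign. Using $c/v\le1$ and $\delta>0$, the term $-n c|\nabla c|^2/v^2$ combined with the $-\int n\nabla c\cdot\nabla(c/v)$-type contribution produces $-\kappa J$ with $\kappa>0$; this is why the shift $\delta$ is placed in the denominator. The strain term is controlled by
\[
\Bigl|\int\frac{\nabla u:\nabla v\otimes\nabla v}{v}\Bigr|\le\|\nabla u\|_2\,\Bigl\|\frac{|\nabla v|^2}{v}\Bigr\|_2 .
\]
Using the known inequality $\int|\nabla v|^4/v^3\le C\int v|D^2\log v|^2$ together with $v\le V$, this is at most $\eta D_c+C_\eta V\|\nabla u\|_2^2$.

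For the fluid, testing the Stokes equation with $u$ gives
\[
\frac12\frac{d}{dt}\|u\|_2^2+D_u=\int nF\cdot u .
\]
Here $\int nF\cdot u\le\|F\|_\infty\|n\|_{6/5}\|u\|_6$. I interpolate $\|n\|_{6/5}$ between $\|n\|_1=M$ and $\|\sqrt n\|_6^2\lesssim D_n$, namely $\|n\|_{6/5}\le M^{3/4}\cdots$; more simply, $\|n\|_{6/5}^2\le C M^{3/2}\|\sqrt n\|_6\cdot\ldots$, and I will check the exponents so that the term is $\le\eta(D_n+D_u)+C$. To handle the strain term $C V\|\nabla u\|_2^2$, I add $K\|u\|_2^2$ with $K$ large. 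This works because that term must be absorbed by $2KD_u$, and the fluid forcing only produces $\eta D_n$ plus a constant. I therefore expect to combine with weights
\[
\mathcal E=H+\tfrac12\textstyle\int|\nabla c|^2/v+K\|u\|_2^2,
\]
choosing $K$ first and then $\eta$ small relative to $1/K$, using $\|n\|_{6/5}\lesssim M^{3/4}D_n^{1/4}$.

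For the moment, I compute $\frac{d}{dt}m=\int n\Delta\langle x\rangle+\int n\nabla c\cdot\nabla\langle x\rangle+\int nu\cdot\nabla\langle x\rangle$. Since $|\nabla\langle x\rangle|\le1$, this is bounded by
\[
CM+\int n|\nabla c|+\int n|u|\le CM+\sqrt{MVJ}+\|n\|_{6/5}\|u\|_6 .
\]
Each piece is absorbed or integrable in time. The negative part of $\int n\log n$ is controlled by the standard estimate $\int n\log_-n\le \int n\langle x\rangle+C$, so Gronwall closes \eqref{eq:energy-budget}. The initial values are finite because $\int n_0\langle x\rangle$ is bounded via the second moment, $n_0$ is bounded, and $\int|\nabla c_0|^2/v_0\le I_0$ since $\frac{|\nabla c_0|^2}{c_0+\delta}\le\frac{|\nabla c_0|^2}{c_0}$. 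I expect the main obstacle to be the $\varepsilon\to0$ and decay-at-infinity justifications in $\R^3$: $c$ is not integrable, so the integrations by parts require cutoffs, and $\nabla c\in L^2$ near $t=0$ follows from $I_0$ and $c_0$ bounded.

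Finally, \eqref{eq:global-signal-norms} follows from $v\in[\delta,V]$. We have $\|\nabla c\|_2^2\le 2VI$ and $\int n|\nabla c|^2\le VJ$. The bound on $\int|\nabla c|^4\le V^3\int|\nabla v|^4/v^3\lesssim D_c$ uses the functional inequality above. The bound on $\|D^2c\|_2^2$ comes from $v D^2\log v=D^2v-\nabla v\otimes\nabla v/v$, which gives $|D^2c|^2\le 2V\,v|D^2\log v|^2+2|\nabla c|^4/\delta^2$.
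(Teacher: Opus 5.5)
The overall plan matches the paper's: density entropy plus the shifted Fisher functional with $v=c+\delta$, the quartic inequality to bound the strain by $\eta D_c+C_\eta VD_u$, a large multiple of the kinetic energy with the forcing split via $\|n\|_{6/5}\lesssim M^{3/4}D_n^{1/4}$, the first moment to control $(n\log n)_-$, and the pointwise Hessian comparison for the signal norms. The gap is in the step you treat as exact: once the shift is present, the mixed derivatives do \emph{not} cancel.

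With $g=-nc$ one has $\nabla g=-c\nabla n-n\nabla c$, so the mixed term carries the factor $c/v=1-\delta/v$. The correct identity is
\[
\frac{d}{dt}\int\frac{|\nabla c|^2}{v}+2D_c+J+\delta\int\frac{n|\nabla c|^2}{v^2}
=-2\int\Bigl(1-\frac{\delta}{v}\Bigr)\nabla n\cdot\nabla c
-2\int\frac{\nabla v\otimes\nabla v}{v}:\nabla u .
\]
The consumption contribution is entirely dissipative. The positive term $\int n(v-\delta)|\nabla c|^2/v^2$ on the right of your display has the wrong sign and is not what remains.

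Adding half of this identity to $H'+D_n=\int\nabla n\cdot\nabla c$ leaves the sign-indefinite residual $\delta\int v^{-1}\nabla n\cdot\nabla c$, which your proposal never accounts for. It has to be absorbed by Young's inequality,
\[
\frac{\delta}{v}|\nabla n||\nabla c|\le\frac14\frac{|\nabla n|^2}{n}+\delta^2\frac{n|\nabla c|^2}{v^2}.
\]
The last term fits under the extra dissipation $\frac{\delta}{2}\int n|\nabla c|^2/v^2$ only when $\delta$ is below an absolute threshold. The paper takes $\delta=1/4$, so that $\delta^2=\delta/4$, and obtains $(H+I)'+\frac34D_n+D_c+\frac12J\le|\text{strain}|$.

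So the shift cannot be left unspecified. For large $\delta$ one has $\delta/v\to1$, so the residual becomes essentially the full chemotactic cross term, while $J$ and $\delta\int n|\nabla c|^2/v^2$ become small, and the argument does not close. With $\delta$ fixed small and this absorption added, the rest of your scheme (choosing $K$ first, then the small parameters) goes through as in the paper.

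On the justification: letting $\varepsilon\downarrow0$ from $[\varepsilon,\tau]$ requires bounding $H+I+m$ at $t=\varepsilon$ by data quantities. That amounts to continuity of these functionals at $t=0$, which you leave open. The paper instead perturbs $n_0$ to $n_0+\varepsilon e^{-|x|^2}$ and justifies the whole-space integrations with auxiliary fixed-interval bounds. It then passes to the limit with Fatou's lemma and lower semicontinuity.
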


\begin{proof}
We first calculate for positive density on \([0,\tau]\), where
\(\tau<T_{\max}\). The approximation and spatial cutoff limits
are justified at the end of the proof. As in the local-energy
calculation of \cite{ChenLiWang2025}, we identify separately the
contributions of the density equation and the signal equation,
and only then combine them.

\smallskip
\noindent\textit{Step 1: the density entropy.}
Multiply the first equation in \eqref{eq:system} by \(1+\log n\).
The diffusion, chemotaxis, and fluid transport terms give,
respectively,
\[
\begin{aligned}
\int(1+\log n)\Delta n&=-\int\frac{|\nabla n|^2}{n},\\
-\int(1+\log n)\diver(n\nabla c)&=\int\nabla n\cdot\nabla c,\\
\int(1+\log n)u\cdot\nabla n
&=\int u\cdot\nabla(n\log n)=0.
\end{aligned}
\]
The last equality uses \(\diver u=0\). Since \(\nabla c=\nabla v\),
we obtain
\begin{equation}\label{eq:density-entropy-global}
H'+D_n=\int\nabla n\cdot\nabla v.
\end{equation}
This mixed derivative will be canceled using the consumption
term in the second equation of \eqref{eq:system}.

\smallskip
\noindent\textit{Step 2: the shifted signal Fisher identity.}
Write \(L_u=\partial_t-\Delta+u\cdot\nabla\).
If \(L_uv=g\) and \(v>0\), the differentiated equation and the
chain rule give
\[
\begin{aligned}
L_u|\nabla v|^2
 &=2\nabla v\cdot\nabla g-2|D^2v|^2
   -2(\nabla v\otimes\nabla v):\nabla u,\\
L_u(v^{-1})&=-v^{-2}g-2v^{-3}|\nabla v|^2.
\end{aligned}
\]
Use \(L_u(fg)=fL_ug+gL_uf-2\nabla f\cdot\nabla g\).
The three second-order terms combine as
\[
-\frac{|D^2v|^2}{v}
+\frac{2D^2v:(\nabla v\otimes\nabla v)}{v^2}
-\frac{|\nabla v|^4}{v^3}
=-v|D^2\log v|^2.
\]
Consequently,
\begin{equation}\label{eq:Fisher-identity}
L_u\!\left(\frac{|\nabla v|^2}{2v}\right)
=-v|D^2\log v|^2+\frac{\nabla v\cdot\nabla g}{v}
-\frac{|\nabla v|^2g}{2v^2}
-\frac{\nabla v\otimes\nabla v}{v}:\nabla u.
\end{equation}
For the signal equation, \(g=-n(v-\delta)\), so that
\(\nabla g=-(v-\delta)\nabla n-n\nabla v\). In particular,
\[
\frac{\nabla v\cdot\nabla g}{v}
-\frac{|\nabla v|^2g}{2v^2}
=-\left(1-\frac\delta v\right)\nabla n\cdot\nabla v
-\frac{n|\nabla v|^2}{2v}
-\frac{\delta n|\nabla v|^2}{2v^2}.
\]
Integrating \eqref{eq:Fisher-identity}, the diffusion integral and
the divergence-free transport integral vanish. Thus
\begin{equation}\label{eq:signal-Fisher-global}
\begin{split}
I'+D_c+\frac12J+\frac\delta2\int\frac{n|\nabla v|^2}{v^2}
={}&-\int\left(1-\frac\delta v\right)\nabla n\cdot\nabla v\\
&-\int\frac{\nabla v\otimes\nabla v}{v}:\nabla u.
\end{split}
\end{equation}
The last integral is a strain term; incompressibility alone
does not make it zero.

\smallskip
\noindent\textit{Step 3: cancellation and control of the strain.}
Adding \eqref{eq:density-entropy-global} and
\eqref{eq:signal-Fisher-global} leaves only
\(\delta\int\nabla n\cdot\nabla v/v\) as a mixed derivative.
For the shift \(\delta=1/4\) fixed in \eqref{eq:global-shift},
Young's inequality gives
\[
\frac\delta v|\nabla n||\nabla v|
\le\frac14\frac{|\nabla n|^2}{n}
+\delta^2\frac{n|\nabla v|^2}{v^2}
=\frac14\frac{|\nabla n|^2}{n}
+\frac\delta4\frac{n|\nabla v|^2}{v^2}.
\]
Absorb these terms and discard the remaining nonnegative
\(\delta n|\nabla v|^2/(4v^2)\) integral. We obtain
\begin{equation}\label{eq:global-mixed}
(H+I)'+\frac34D_n+D_c+\frac12J
\le\left|\int\frac{\nabla v\otimes\nabla v}{v}:\nabla u\right|.
\end{equation}
For the quartic gradient, put \(q=\nabla\log v\). A direct
divergence calculation yields
\begin{equation}\label{eq:log-gradient-divergence}
\diver(v|q|^2q)
=v|q|^4+2vD^2\log v:(q\otimes q)
+v|q|^2\Delta\log v.
\end{equation}
Integrate \eqref{eq:log-gradient-divergence} and use
\(|\Delta\log v|\le\sqrt3|D^2\log v|\). Then
\[
\int v|q|^4
\le(2+\sqrt3)
\left(\int v|D^2\log v|^2\right)^{1/2}
\left(\int v|q|^4\right)^{1/2}.
\]
If the quartic integral is nonzero, divide by its square root;
otherwise the conclusion is immediate. Hence
\begin{equation}\label{eq:global-quartic}
\int\frac{|\nabla v|^4}{v^3}
\le(2+\sqrt3)^2D_c.
\end{equation}
Because \(v\le V\), the strain in \eqref{eq:global-mixed} obeys
\[
\begin{aligned}
\left|\int\frac{\nabla v\otimes\nabla v}{v}:\nabla u\right|
&\le\left(\int\frac{|\nabla v|^4}{v^3}\right)^{1/2}
      \left(\int v|\nabla u|^2\right)^{1/2}\\
&\le\frac12D_c+\frac{(2+\sqrt3)^2}{2}VD_u.
\end{aligned}
\]
Thus, with an absolute constant \(C\),
\begin{equation}\label{eq:entropy-strain}
(H+I)'+\frac34D_n+\frac12D_c+\frac12J\le CVD_u.
\end{equation}

\smallskip
\noindent\textit{Step 4: the kinetic energy and the density moment.}
Mass conservation and the Gagliardo--Nirenberg inequality yield
\begin{equation}\label{eq:force-interpolation}
\|n\|_{6/5}=\|\sqrt n\|_{12/5}^2
\le CM^{3/4}D_n^{1/4},\qquad \|u\|_6\le CD_u^{1/2}.
\end{equation}
Testing the third equation in \eqref{eq:system} with \(u\)
eliminates the pressure by incompressibility and gives
\[
\frac12(\|u\|_2^2)'+D_u=\int nF\cdot u.
\]
Use \eqref{eq:force-interpolation} and then Young's inequality,
first for \(D_u^{1/2}\) and then for \(D_n^{1/2}\).
For each
\(\epsilon>0\),
\begin{equation}\label{eq:kinetic}
\frac12(\|u\|_2^2)'+\frac34D_u
\le\epsilon D_n+C_{\epsilon,M,\|F\|_\infty}.
\end{equation}
Indeed, its forcing is at most
\(C\|F\|_\infty M^{3/4}D_n^{1/4}D_u^{1/2}\).
Next multiply the density equation by \(\langle x\rangle\).
Its divergence form gives
\begin{equation}\label{eq:moment-identity}
m'=\int n\Delta\langle x\rangle
+\int n(\nabla c+u)\cdot\nabla\langle x\rangle.
\end{equation}
In \eqref{eq:moment-identity}, use
\(|\nabla\langle x\rangle|\le1\), the boundedness of its Laplacian,
and \eqref{eq:force-interpolation} to obtain
\[
\int n|\nabla c|\le(MV)^{1/2}J^{1/2},\qquad
\int n|u|\le CM^{3/4}D_n^{1/4}D_u^{1/2}.
\]
Hence, for arbitrary positive \(\epsilon_1,\epsilon_2,\epsilon_3\),
\begin{equation}\label{eq:moment}
m'\le\epsilon_1J+\epsilon_2D_n+\epsilon_3D_u
+C_{M,V,\epsilon_1,\epsilon_2,\epsilon_3}.
\end{equation}

\smallskip
\noindent\textit{Step 5: a coercive combined energy.}
On the whole space the signed entropy \(H\) is not bounded below
by mass alone. Its negative part is controlled by the moment:
minimizing \(z\log z+az\) over \(z\ge0\), with
\(a=\langle x\rangle\), gives pointwise
\[
z\log z+\langle x\rangle z\ge-e^{-\langle x\rangle-1},
\]
so that, with \(C_w=\int e^{-\langle x\rangle-1}\),
\begin{equation}\label{eq:negative-global}
H+m\ge-C_w,\qquad \int(n\log n)_-\le m+C_w.
\end{equation}
Choose \(B=B(A)\) so large that \(3B/4-CV>1\).
Add \eqref{eq:entropy-strain}, \(B\) times \eqref{eq:kinetic},
and twice \eqref{eq:moment}. Choosing the small parameters after
\(B\) gives
\begin{equation}\label{eq:closed-energy}
\Ecal'+c_A(D_n+D_c+J+D_u)\le C_{A,M,\|F\|_\infty},
\quad
\Ecal=H+I+\frac B2\|u\|_2^2+2m+C_w.
\end{equation}
By \eqref{eq:negative-global},
\(\Ecal\ge I+(B/2)\|u\|_2^2+m\).
Also \(\int n\log_+n\le \Ecal\).
The initial entropy is finite by boundedness, mass, and moment.
Moreover \(I(0)\le I_0\). Integrating \eqref{eq:closed-energy}
proves \eqref{eq:energy-budget} for the quantities in
\eqref{eq:global-energy-quantities}. To obtain the explicit signal
norms, use \(\delta\le v\le V\) from \eqref{eq:global-shift} and
the pointwise identity and estimate
\begin{equation}\label{eq:signal-Hessian-comparison}
\begin{aligned}
D^2c&=vD^2\log v+\frac{\nabla c\otimes\nabla c}{v},\\
\frac{|D^2c|^2}{v}
&\le2v|D^2\log v|^2+2\frac{|\nabla c|^4}{v^3}.
\end{aligned}
\end{equation}
Equations \eqref{eq:global-quartic} and
\eqref{eq:signal-Hessian-comparison} bound the Hessian and quartic
gradient by \(C_A D_c\). The remaining terms satisfy
\(\|\nabla c\|_2^2\le2VI\) and
\(\int n|\nabla c|^2\le VJ\). Integrating and using
\eqref{eq:energy-budget} proves \eqref{eq:global-signal-norms}.

\smallskip
\noindent\textit{Step 6: approximation and integration over the whole space.}
We justify the global integrations before letting \(\tau\)
approach the maximal time. On the fixed interval \([0,\tau]\),
the local construction bounds \(n,u,\nabla c\). Bounded drift
propagates the first moment by truncated weights. This also
makes \(n|\log n|\) integrable. The separate entropy estimate
\[
H'+\frac12D_n\le\frac12M\|\nabla c\|_\infty^2
\]
first gives finite density Fisher dissipation on this fixed
interval. The initial signal assumption gives
\(\nabla c_0\in L^2\). Testing the signal equation with
\(-\Delta c\) gives
\[
\frac12(\|\nabla c\|_2^2)'+\|\Delta c\|_2^2
\le\left(A\|n\|_2+\|u\|_\infty\|\nabla c\|_2\right)
       \|\Delta c\|_2.
\]
Thus \(\nabla c\in L^\infty_tL^2_x\) and \(D^2c\in L^2_{t,x}\)
on \([0,\tau]\). These auxiliary estimates justify all spatial
cutoff limits in \eqref{eq:global-mixed}--\eqref{eq:closed-energy},
including \eqref{eq:global-quartic}. Their classical constants
do not enter \eqref{eq:closed-energy}.

For initial density zeros, use
\(n_0+\varepsilon e^{-|x|^2}\), keeping \(c_0,u_0\) fixed.
Local continuous dependence, iterated a finite number of times,
gives convergence on each fixed \([0,\tau]\). Initial mass,
moment, and absolute entropy are uniformly bounded.
The pointwise nonnegative combination
\[
n\log n+2\langle x\rangle n+e^{-\langle x\rangle-1}
\ge\langle x\rangle n
\]
allows Fatou's lemma for the terminal entropy and moment.
Lower semicontinuity handles Fisher and kinetic terms.
First let \(\varepsilon\downarrow0\), and then let
\(\tau\uparrow\min\{T,T_{\max}\}\).
\end{proof}

\subsection{Critical velocity and pressure bounds}

\begin{Lem}\label{lem:fluid}
For every finite \(0<T\le T_{\max}\), the preterminal solution satisfies
\begin{align}
&n\in L^1(\R^3\times(0,T))\cap L^{5/3}(\R^3\times(0,T))
       \cap L^2(0,T;L^{3/2}),\label{eq:n-integrable}\\
&u\in L^\infty(0,T;L^3)\cap L^5(\R^3\times(0,T)),\qquad
\nabla u\in L^{5/2}(\R^3\times(0,T)).\label{eq:u-integrable}
\end{align}
The canonical pressure
\begin{equation}\label{eq:canonical-pressure}
\pi=\Delta^{-1}\nabla\cdot(nF)
\end{equation}
belongs to \(L^2(0,T;L^3)\). In particular,
\begin{equation}\label{eq:uniform-fluid-tail}
\sup_{x_0\in\R^3}\!
\left(U(r;x_0,T)+\|\nabla u\|_{L^{5/2}(Q_r(x_0,T))}
+P(r;x_0,T)\right)\longrightarrow0
\end{equation}
as \(r\downarrow0\).
\end{Lem}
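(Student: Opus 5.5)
We derive everything from the energy budget of \Cref{lem:energy}, together with mass conservation and linear Stokes theory; no classical norm at the endpoint enters. The plan has three steps: density integrability, velocity bounds, and the pressure and uniform tail.

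\textbf{Density integrability.} The \(L^1_{t,x}\) bound is immediate from \(\int n(t)=M\). For the other two spaces we use \(\sqrt n\) together with \(\|\nabla\sqrt n\|_2^2=D_n/4\).
\begin{itemize}
\item By Sobolev, \(\|n\|_3=\|\sqrt n\|_6^2\le CD_n\). Interpolating between \(L^1\) and \(L^3\) gives \(\|n\|_{3/2}\le M^{1/4}\|n\|_3^{3/4}\le CM^{1/4}D_n^{3/4}\). This alone gives only \(L^{4/3}_t\); to reach \(L^2_t\) we instead write \(\|n\|_{3/2}=\|\sqrt n\|_3^2\le C\|\sqrt n\|_2\|\nabla\sqrt n\|_2\le CM^{1/2}D_n^{1/2}\), and then \(\int_0^T\|n\|_{3/2}^2\le CM\int_0^TD_n\).
\item For \(L^{5/3}_{t,x}\) we use \(\|n\|_{5/3}^{5/3}\le\|n\|_1^{2/3}\|n\|_3\le M^{2/3}CD_n\), which is integrable in time.
\end{itemize}

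\textbf{Velocity bounds.} We write \(u=e^{t\Delta}u_0+\int_0^te^{(t-s)\Delta}\PP(nF)\,\dd s\). The first term is harmless, since \(u_0\in H^4\subset L^2\cap L^\infty\) and it satisfies all the claimed bounds, with \(\nabla e^{t\Delta}u_0\in L^{5/2}_{t,x}\) by interpolation of \(L^2\) and \(L^\infty\) bounds on \(\nabla u_0\). For the Duhamel term we apply maximal regularity for the Stokes/heat equation with forcing \(f=\PP(nF)\).
\begin{itemize}
\item Since \(f\in L^{5/3}_{t,x}\), maximal regularity gives \(D^2w,\partial_tw\in L^{5/3}_{t,x}\). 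Parabolic Sobolev embedding in five space-time dimensions then gives \(\nabla w\in L^{5/2}_{t,x}\) (from \(1/(5/3)-1/5=2/5\)) and \(w\in L^5_{t,x}\) (from \(3/5-2/5=1/5\)).
\item For \(L^\infty_tL^3_x\) we use \(f\in L^2_tL^{3/2}_x\). The heat estimate gives \(\|e^{(t-s)\Delta}f\|_3\le C(t-s)^{-1/2}\|f\|_{3/2}\), and \(\int_0^t(t-s)^{-1/2}\|f(s)\|_{3/2}\,\dd s\) is exactly the borderline Hardy--Littlewood--Sobolev case, which fails by a logarithm. We avoid this by combining the energy bound \(u\in L^\infty_tL^2\) from \Cref{lem:energy} with an \(L^\infty_tL^{q}\) bound for some \(q>3\), obtained from \(f\in L^\infty_tL^1\cap L^{5/3}_{t,x}\) via the kernel estimate \((t-s)^{-\frac32(\frac35-\frac1q)}\) and Hölder in time. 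Interpolation then yields \(L^\infty_tL^3\). For example, \(q=4\): the exponent is \(\frac32\cdot\frac{7}{20}=\frac{21}{40}\), which lies in \(L^{5/2}_t\) near \(0\) as \(\frac{21}{40}\cdot\frac52<1\).
\end{itemize}

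\textbf{Pressure and the uniform tail.} Calderón--Zygmund gives \(\|\pi\|_3\le C\|nF\|_{3/2}\le C\|F\|_\infty\|n\|_{3/2}\), so \(\pi\in L^2_tL^3\), and \(P(r)\le2\|\pi\|_{L^2_tL^3(Q_r)}\). The uniform limit \eqref{eq:uniform-fluid-tail} should then follow because each of \(|u|^5\), \(|\nabla u|^{5/2}\) and \(\|\pi(t)\|_{L^3(B_r)}^2\) is a fixed integrable function on \(\R^3\times(0,T)\), integrated over a set of measure \(\sim r^5\) (the time slab has length \(r^2\)). Uniform integrability of a single \(L^1\) function gives smallness uniformly in \(x_0\).

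The main obstacle is this uniformity for the pressure. Here \(\|\pi(t)\|_{L^3(B_r(x_0))}^2\le\|\pi(t)\|_{L^3(\R^3)}^2=:g(t)\in L^1(0,T)\), so \(P(r)^2\le4\int_{T-r^2}^Tg\to0\) independently of \(x_0\). For \(U\) and \(\nabla u\), absolute continuity of the integral over sets of small measure gives the needed uniformity.
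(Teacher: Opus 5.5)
The gap is in your bound \(u\in L^\infty(0,T;L^3)\). Your density bounds, the pressure bound, and the uniform-tail argument are fine. So are the bounds \(u\in L^5_{t,x}\) and \(\nabla u\in L^{5/2}_{t,x}\) from maximal \(L^{5/3}\) regularity and the parabolic embedding. That is a valid alternative to the paper, which interpolates between \(\dot H^{1/2}\) and \(\dot H^{3/2}\).

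Your workaround needs an \(L^\infty_tL^q\) bound with \(q>3\). You derive it from \(\|e^{\tau\Delta}g\|_q\le C\tau^{-a}\|g\|_{5/3}\), with \(a=\frac32(\frac35-\frac1q)\), and H\"older in time against \(\|f(s)\|_{5/3}\in L^{5/3}(0,T)\). That requires \(\tau^{-a}\in L^{5/2}\) near \(0\), i.e.\ \(a<\frac25\), i.e.\ \(q<3\). Your check at \(q=4\) is an arithmetic slip: \(\frac{21}{40}\cdot\frac52=\frac{21}{16}>1\).

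No other choice of \(q\) rescues it. The spaces \(L^{5/3}_{t,x}\), \(L^\infty_tL^1\), \(L^2_tL^{3/2}\) and their interpolants all satisfy \(\frac2r+\frac3p=3\), so they have the same parabolic scaling. A scaling argument then shows the Duhamel term can at best reach the critical space \(L^\infty_tL^3\), never \(L^\infty_tL^q\) with \(q>3\). From \(L^1\) the time exponent \(\frac32(1-\frac1q)\) is already \(\ge1\) once \(q\ge3\). Moreover, \(\PP(nF)\) need not lie in \(L^1\), since \(\PP\) is unbounded there. So the interpolation with \(L^\infty_tL^2\) has nothing to interpolate against.

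This leaves the critical bound unproved. It is not cosmetic: it is exactly what \Cref{prop:terminal} needs for the Qian--Xi kernel, and what \Cref{cor:zero} needs for \(L_T\).

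What you need is an argument that is sharp at the critical level without losing the logarithm you correctly identified. The paper uses an \(L^2\)-in-time energy structure. Test the projected Stokes equation with \(\Lambda u\), \(\Lambda=(-\Delta)^{1/2}\), and use \(L^{3/2}\hookrightarrow\dot H^{-1/2}\) to get
\[
\frac{\dd}{\dd t}\|u\|_{\dot H^{1/2}}^2+\|u\|_{\dot H^{3/2}}^2\le C\|F\|_\infty^2\|n\|_{3/2}^2 .
\]
Then \(n\in L^2(0,T;L^{3/2})\), which you already proved, together with \(u_0\in\dot H^{1/2}\) gives \(u\in L^\infty_t\dot H^{1/2}\subset L^\infty_tL^3\). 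The same inequality also yields the \(L^5\) and \(L^{5/2}\) bounds by interpolation.

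If you prefer to stay within your maximal-regularity framework, use the trace theorem instead:
\[
W^{2,1}_{5/3}\hookrightarrow C_tB^{4/5}_{5/3,5/3}\hookrightarrow C_tB^{0}_{3,5/3}\hookrightarrow C_tL^3 .
\]
The last embedding holds because the summability index \(5/3\) does not exceed \(2\). Either way, some such critical-level argument must replace your \(q>3\) interpolation.
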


\begin{proof}
We will use the following form of parabolic Sobolev repeatedly:
for a time interval \(J\) and a spatially compactly supported
\(f\in L^\infty(J;L^2(\R^3))\cap L^2(J;H^1(\R^3))\),
\begin{equation}\label{eq:parabolic-Sobolev}
\int_J\int_{\R^3}|f|^{10/3}
\le C\left(\sup_{t\in J}\int_{\R^3}|f(t)|^2\right)^{2/3}
       \int_J\int_{\R^3}|\nabla f|^2.
\end{equation}
This follows by interpolating the spatial \(L^2\) and \(L^6\)
norms, applying \(\|f\|_6\le C\|\nabla f\|_2\), and integrating
in time; the same inequality holds for \(H^1\) functions by
approximation. Applied to \(\sqrt n\), together with spatial
Gagliardo--Nirenberg and mass conservation \eqref{eq:mass-max},
it gives
\begin{equation}\label{eq:density-Sobolev}
\iint n^{5/3}\le CM^{2/3}\iint|\nabla\sqrt n|^2,\qquad
\|n(t)\|_{3/2}^2\le CM\|\nabla\sqrt n(t)\|_2^2.
\end{equation}
The right-hand sides of \eqref{eq:density-Sobolev} are finite
by \eqref{eq:energy-budget} and
\(|\nabla\sqrt n|^2=|\nabla n|^2/(4n)\).
Interpolation also gives \(n\in L^p_{t,x}\) for every
\(1\le p\le5/3\).
Let \(\Lambda=(-\Delta)^{1/2}\). Taking the \(L^2\) inner product of the projected Stokes equation with \(\Lambda u\), we obtain
\begin{align*}
    \frac12\frac{d}{dt}\|\Lambda^{1/2}u\|_2^2
+\|\Lambda^{3/2}u\|_2^2
=
\left\langle
\Lambda^{-1/2}\mathbb P(nF),
\Lambda^{3/2}u
\right\rangle.
\end{align*}
The embedding \(L^{3/2}\hookrightarrow\dot H^{-1/2}\) and Young's inequality yield
\begin{equation}\label{eq:half-energy}
\frac{\dd}{\dd t}\|u\|_{\dot H^{1/2}}^2
+\|u\|_{\dot H^{3/2}}^2
\le C\|F\|_\infty^2\|n\|_{3/2}^2.
\end{equation}
Here \(\PP\) commutes with the Fourier multipliers and is bounded
on the relevant homogeneous Sobolev spaces.
The initial norm is finite because \(u_0\in H^4\).
Consequently \(u\in L^\infty_t\dot H^{1/2}
\cap L^2_t\dot H^{3/2}\), and
\(\dot H^{1/2}\hookrightarrow L^3\). Interpolation gives
\begin{equation}\label{eq:fluid-interpolation}
\|u\|_5^5\le C\|u\|_{\dot H^{1/2}}^3
                 \|u\|_{\dot H^{3/2}}^2,\qquad
\|\nabla u\|_{5/2}^{5/2}
\le C\|u\|_{\dot H^{1/2}}^{1/2}
       \|u\|_{\dot H^{3/2}}^2 .
\end{equation}
Integrating \eqref{eq:fluid-interpolation} and using
\eqref{eq:half-energy} proves \eqref{eq:u-integrable}.
The pressure potential satisfies
\begin{equation}\label{eq:pressure-potential}
\|\Delta^{-1}\nabla\cdot f\|_3\le C\|f\|_{3/2}.
\end{equation}
Apply \eqref{eq:pressure-potential} with \(f=nF\), and then
\eqref{eq:n-integrable}, to obtain the asserted pressure bound.
This is the pressure recovered from the projected mild system.

Finally, each local velocity or gradient norm is bounded by the
corresponding full-space norm over \((T-r^2,T)\), and
\[
P(r;x_0,T)\le
2\|\pi\|_{L^2(T-r^2,T;L^3(\R^3))}.
\]
Absolute continuity of these strong norms proves
\eqref{eq:uniform-fluid-tail}, uniformly in the spatial center.
\end{proof}

\section{Local epsilon regularity}\label{sec:epsilon}

This section proves \Cref{thm:epsilon}. 
We give the estimates
needed here explicitly. In particular, the pressure is estimated
in \(L^2_tL^3_x\), and we estimate the signal-gradient term from the density and velocity terms.
The parabolic scaling about \(z_0=(x_0,t_0)\) is
\begin{equation}\label{eq:scaling}
\begin{aligned}
n_R(y,s)&=R^2n(x_0+Ry,t_0+R^2s),&
c_R(y,s)&=c(x_0+Ry,t_0+R^2s),\\
u_R(y,s)&=Ru(x_0+Ry,t_0+R^2s),&
\pi_R(y,s)&=R^2\pi(x_0+Ry,t_0+R^2s),\\
F_R(y,s)&=RF(x_0+Ry,t_0+R^2s).
\end{aligned}
\end{equation}
The rescaled functions satisfy the same system. Both fluid
norms in \eqref{eq:scale-quantities} are invariant under this scaling.

\subsection{Joint local entropy and fixed-scale energy}

For this section only, set
\begin{equation}\label{eq:local-entropy-variables}
v=\sqrt{1+c},\qquad h=\nabla v,\qquad H=n\log n,\qquad
K=D^2v-\frac{h\otimes h}{v}.
\end{equation}
Let \(3<q\le10/3\), and define
\begin{equation}\label{eq:S-Pi}
S=\iint_{Q_1}(n^{q/2}+|h|^q+|u|^q),\qquad
\Pi=\|\pi-(\pi)_{B_1}(t)\|_{L_t^2L_x^3(Q_1)}.
\end{equation}

\begin{Prop}
\label{prop:point}
Let \((n,c,u,\pi)\) solve \eqref{eq:system} classically in \(Q_1\), and suppose that the solution extends classically to a neighborhood of \((0,0)\). Assume that \(n>0\), \(0\le c\le A\), and \(\|F\|_\infty\le G\) in \(Q_1\).
If \(S+\Pi^2\) is sufficiently small, depending only on
\(A,G,q\), then
\begin{equation}\label{eq:point-bound}
n(0,0)+|h(0,0)|^2+|u(0,0)|^2
\le C_{A,G,q}\big(S^{3/(4q)}+\Pi\big).
\end{equation}
\end{Prop}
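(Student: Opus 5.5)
Following the strategy announced in \Cref{rem:epsilon-comparison}, I will prove \Cref{prop:point} by a Caffarelli--Kohn--Nirenberg type backward-kernel induction built on a joint local entropy inequality for \(n\log n\) and the shifted signal Fisher density \(|h|^2\), with \(v=\sqrt{1+c}\) as in \eqref{eq:local-entropy-variables}.

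\emph{Step 1: local entropy inequality.} I repeat Steps 1--3 of \Cref{lem:energy}, now multiplied by a nonnegative test function \(\phi\) supported in \(Q_1\). With \(v=\sqrt{1+c}\) one has \(|\nabla c|^2/(1+c)=4|h|^2\). The mixed term \(\int\nabla n\cdot\nabla c\) is canceled up to terms with bounded coefficients (depending on \(A\)) by the Fisher identity \eqref{eq:Fisher-identity}. The Hessian dissipation is written through \(K=D^2v-h\otimes h/v\), and the strain term is absorbed as in \eqref{eq:entropy-strain}. The outcome is an inequality of the form
\[
\sup_t\int(H+|h|^2)\phi+\iint\Big(\frac{|\nabla n|^2}{n}+|K|^2+n|h|^2\Big)\phi
\le C\iint\big(|n\log n|+|h|^2\big)(|\partial_t\phi|+|\Delta\phi|)+\text{flux terms}.
\]
The flux terms are \((|n\log n|+|h|^2)|u|\,|\nabla\phi|\), \(n|h||\nabla\phi|\log n\), and the strain term \(|h|^2|\nabla u|\phi\). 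Because the strain term has no \(\nabla u\) smallness, I add a local Stokes energy: I test the velocity equation with \(u\phi\), which introduces \((\pi-(\pi)_{B_1})u\cdot\nabla\phi\), bounded by \(\Pi\,\|u\|_{L^3}\), and the forcing \(nF\cdot u\phi\). The negative part of \(n\log n\) is controlled by the density equation tested with \(\phi\) (the weighted mass estimate \eqref{eq:weighted-mass}), since \(n\log n\ge -e^{-1}\) is bounded below on a compact support.

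\emph{Step 2: backward-kernel induction.} For \(r_k=2^{-k}\), I take \(\phi=\chi\,\Gamma_k\), where \(\Gamma_k\) is the backward heat kernel centered at \((0,r_k^2)\) and \(\chi\) is a fixed cutoff. This gives an estimate for the scaled quantity
\[
E(r_k)=\sup_t r_k^{-1}\int_{B_{r_k}}(n+|h|^2+|u|^2)+\dots
\]
in terms of sums over \(j\le k\) of \(E(r_j)^{\theta}\) times the scaled \(L^q\) norms on \(Q_{r_j}\). The hypothesis \(q>3\) gives gain exponents, so the inductive claim
\[
r_k^{-2}\cdot\text{(scaled averages)}\le C\big(S^{3/(4q)}+\Pi\big)
\]
propagates when \(S+\Pi^2\) is small.

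\emph{Step 3: pressure.} On each ball I split the pressure as \(\pi=\pi_1+\pi_2\). Here \(\pi_1=\Delta^{-1}\nabla\cdot(\chi nF)\) is local and bounded by \(\|n\|_{3/2}\), hence by \(S\). The part \(\pi_2\) is harmonic, and its oscillation on \(B_{r_j}\) decays like \(r_j/r_0\) times \(\Pi\). This yields the summable flux estimate \eqref{eq:pressure-flux}.

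\emph{Step 4: conclusion.} Since the solution is classical near \((0,0)\), Lebesgue differentiation gives \(n(0,0)+|h|^2+|u|^2=\lim_k\) of the scaled averages, and the uniform bound yields \eqref{eq:point-bound}.

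The main obstacle is closing the induction with the logarithmic and strain terms. The term \(n|h|\log n\) does not fit homogeneously, and \(|h|^2|\nabla u|\) needs a gradient bound for \(u\) that is not part of \(S\). For the first, I will bound \(\log n\le C_\eta n^\eta\) and use the dissipation \(n|h|^2\) and \(|\nabla\sqrt n|^2\) together with the slack provided by \(q\le10/3\). For the second, I will integrate by parts to move the derivative onto \(h\), which produces \(|h||u||\nabla h|\), and then absorb the result via \(K\) and the quartic bound \eqref{eq:global-quartic} localized.
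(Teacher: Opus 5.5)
Your overall architecture matches the paper's: a joint entropy for $H+2|h|^2$ plus a multiple of the local Stokes energy, a fixed-scale bound, a backward-heat-kernel induction on averages $r^{-3}\int_{B_r}$, and a pointwise limit. However, two steps as you describe them would not close the induction.

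\emph{Pressure.} The density sources in the intermediate shells $B_{1/2}\setminus B_{2r_j}$ are never controlled at the right scale.

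If your cutoff $\chi$ is fixed, these sources sit in $\pi_1$, which you bound only by a unit-scale small number $\epsilon_0$ (coming from $S$ or $d_0$) in $L^2_tL^3_x$. The inductive average bound gives $\|u\|_{L^2_tL^{3/2}_x(Q_R)}\lesssim m_0^{1/2}R^3$, and $|\nabla\psi_j|\lesssim R^{-4}$ on the annulus of radius $R=r_k$. So that annulus contributes up to $\epsilon_0 m_0^{1/2}R^{-1}$, and the sum over $k\le j$ grows like $r_j^{-1}$; it is not uniform in $j$. If instead $\chi$ is adapted to $B_{r_j}$, the same sources sit in $\pi_2$, whose oscillation is then not controlled by $\Pi$.

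What is missing is a second, scale-dependent split of the Newtonian part. At radius $R$, take a near piece with source in $B_{2R}$, bounded by the parent-scale estimate $\|n\|_{L^2_tL^{3/2}_x(Q_{2R})}\lesssim m_0R^3$. Take a far piece satisfying $|\nabla\pi^{\rm far}|\lesssim G\sum_{\rho=2R,4R,\dots}\rho^{-3}\int_{B_{2\rho}}n\lesssim Gm_0\log(1/R)$ on $B_R$, which uses the bounds at all larger dyadic radii. This gives \eqref{eq:pressure-decay} and the summable flux \eqref{eq:pressure-flux}, of order $m_0^{3/2}R^2\log(1/R)+m_0^{1/2}R(\Pi+Gd_0)$.

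It also requires the exact partition \eqref{eq:pressure-partition} of $\psi_j$, with a separate pressure mean subtracted in each piece (allowed since $\nabla\cdot u=0$). The innermost piece must be estimated on $Q_{r_{j-1}}$ only. Your recursion with sums over $j\le k$ uses the very scale being proved, so as stated it is circular.

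\emph{Negative entropy and strain.} Because the terminal entropy is signed, you must show that $\int H_-\psi_j(\tau)$ is small, not merely bounded. The lower bound $n\log n\ge-e^{-1}$ only gives $\int H_-\psi_j\le e^{-1}\int\psi_j\le e^{-1}$, an $O(1)$ error that destroys the smallness of $\int|h|^2\psi_j$.

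The paper instead uses $H_-\le Cn^{3/4}$ and H\"older against $\int\psi_j\le1$, as in \eqref{eq:negative-local}. This is fed by the kernel-weighted mass bound \eqref{eq:weighted-mass}, which is itself proved from the mass identity with the same fluxes. The resulting term $(d_0+m_0^{3/2})^{3/4}\approx m_0^{9/8}$ is the weakest superlinear term in \eqref{eq:induction}, and it is what closes the induction with $m_0=C_0d_0^{1/2}$. The hypothesis $q>3$ is not the source of this gain; it enters only through $q'<q/2$ in the fixed-scale entropy fluxes.

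Drop your integration-by-parts fallback for the strain. After absorbing $\epsilon|\nabla h|^2$ it leaves $\iint|u|^2|h|^2\psi$. That quartic term is not controlled by the energy-level quantities in the induction; it would need $u\in L^4_{t,x}$. The route already in your Step~1 is the correct one: use \eqref{eq:strain-local}, absorb $|h|^4/v^2$ into $|K|^2$ via \eqref{eq:quartic-local} up to the cutoff error $|h|^3|\nabla\psi|$, and absorb $|\nabla u|^2$ by adding $4(1+A)$ times the local Stokes energy.

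The logarithmic flux you flag is harmless. The bound $|z\log z|^{3/2}\le C(z^{4/3}+z^{5/3})$ gives $\|H\|_{L^{3/2}(Q_r)}\lesssim m_0^{8/9}r^{10/3}$, and hence \eqref{eq:dyadic-flux}.
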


We first derive the local inequalities used to prove this
proposition. With the variables in
\eqref{eq:local-entropy-variables} and the operator \(L_u\)
used in \eqref{eq:Fisher-identity}, the signal equation gives
\begin{equation}\label{eq:local-signal-equation}
L_uv=\frac{|h|^2}{v}-\frac{nv}{2}+\frac{n}{2v},
\qquad
L_u(\partial_i v)=\partial_i(L_uv)-(\partial_i u_j)\partial_jv,
\end{equation}
The density equation and the differentiated signal equation
\eqref{eq:local-signal-equation} give,
respectively,
\begin{equation}\label{eq:local-pair-identities}
\begin{aligned}
L_uH
&=-\diver\big(n(1+\log n)\nabla c\big)
  +\nabla n\cdot\nabla c-\frac{|\nabla n|^2}{n},\\
L_u(2|h|^2)
&=-4|K|^2-2\left(v-\frac1v\right)h\cdot\nabla n
  -2n\left(1+\frac1{v^2}\right)|h|^2
  -4(h\otimes h):\nabla u.
\end{aligned}
\end{equation}
Indeed, \(\nabla c=2vh\), and the purely diffusive terms in the
second line are
\(-4|D^2v|^2+8D^2v:(h\otimes h)/v-4|h|^4/v^2=-4|K|^2\).
Adding the two lines of \eqref{eq:local-pair-identities},
the remaining mixed derivative is \(2h\cdot\nabla n/v\).
Since
\[
\frac2v\nabla n\cdot h
\le2|\nabla\sqrt n|^2+\frac{2n|h|^2}{v^2},
\]
we conclude
\begin{equation}\label{eq:local-entropy}
\begin{split}
L_u(H+2|h|^2)
+2|\nabla\sqrt n|^2+4|K|^2+2n|h|^2
&\le-\nabla\cdot\big(n(1+\log n)\nabla c\big)\\
&\quad-4(h\otimes h):\nabla u.
\end{split}
\end{equation}
The strain does not vanish under the divergence-free condition.
It satisfies
\begin{equation}\label{eq:strain-local}
4|\nabla u||h|^2
\le\frac{|h|^4}{v^2}+4(1+A)|\nabla u|^2.
\end{equation}

For a nonnegative spatial test function \(\psi\), put
\(J_\psi=\int (|h|^2h/v)\cdot\nabla\psi\).
The identity
\begin{equation}\label{eq:local-quartic-divergence}
\diver\!\left(\frac{|h|^2h}{v}\right)
=\frac{2K:(h\otimes h)+\tr K\,|h|^2}{v}
+2\frac{|h|^4}{v^2}
\end{equation}
and \(|2h\otimes h+|h|^2I_3|^2=11|h|^4\) imply
\begin{align}
\int\frac{|h|^4}{v^2}\psi
&\le\frac{11}{4}\int|K|^2\psi+|J_\psi|,
\label{eq:quartic-local}\\
\int\left(|\nabla h|^2+\frac{|h|^4}{v^2}\right)\psi
&\le\frac{41}{4}\int|K|^2\psi+3|J_\psi|.
\label{eq:hessian-local}
\end{align}
For \eqref{eq:quartic-local}, integrate
\eqref{eq:local-quartic-divergence} and use
\(\sqrt{11}\,ab\le b^2+(11/4)a^2\).
The second follows from
\(|\nabla h|^2\le2|K|^2+2|h|^4/v^2\).
Thus the quartic term in \eqref{eq:strain-local} can be absorbed
while retaining control of a positive multiple of
\(\int|\nabla h|^2\psi\). The remaining cutoff error is bounded
by \(C\int|h|^3|\nabla\psi|\).

The local Stokes energy identity is
\begin{equation}\label{eq:local-Stokes-energy}
(\partial_t-\Delta)|u|^2+2|\nabla u|^2
=-2\nabla\cdot(\pi u)+2nF\cdot u.
\end{equation}
Choose \(\beta_A=4(1+A)\). Multiply
\eqref{eq:local-Stokes-energy} by \(\beta_A\)
and combine it with \eqref{eq:local-entropy}--\eqref{eq:hessian-local}.
For a nonnegative smooth test function that vanishes near the
lower time face, integration up to \(\tau<0\) yields
\begin{equation}\label{eq:joint-local}
\begin{aligned}
&\int(H+2|h|^2+\beta_A|u|^2)\psi(\tau)
+c_A\iint_{t<\tau}\psi
  (|\nabla\sqrt n|^2+|\nabla h|^2+|\nabla u|^2)\\
&\le C_A\iint_{t<\tau}(|H|+|h|^2+|u|^2)
                   |\psi_t+\Delta\psi|\\
&\quad+C_A\iint_{t<\tau}
 \big[(|H|+n)|h|+|h|^3+(|H|+|h|^2)|u|\big]|\nabla\psi|\\
&\quad+2\beta_A\iint_{t<\tau}\pi u\cdot\nabla\psi
+2\beta_A\iint_{t<\tau}nF\cdot u\,\psi .
\end{aligned}
\end{equation}
The pressure term is kept as a signed integral.
The density mass identity for the same test is
\begin{equation}\label{eq:local-mass}
\int n\psi(\tau)
=\iint_{t<\tau}n(\psi_t+\Delta\psi)
+\iint_{t<\tau}n(\nabla c+u)\cdot\nabla\psi.
\end{equation}

Assume \(S\le1\). Write \(q'=q/(q-1)\).
Since \(q>3\), \(q'<q/2\), and
\begin{equation}\label{eq:local-entropy-growth}
|z\log z|^{q'}\le C_q(z+z^{q/2}),\qquad
|z\log z|\le C_q(z^{3/4}+z^{q/2}).
\end{equation}
Using \eqref{eq:local-entropy-growth} and the definition
\eqref{eq:S-Pi}, we obtain
\begin{equation}\label{eq:H-fixed}
\|H\|_{L^{q'}(Q_1)}\le C_qS^{2(q-1)/q^2},\qquad
\|H\|_{L^1(Q_1)}\le C_qS^{3/(2q)}.
\end{equation}
All fixed-cutoff entropy fluxes are bounded by
\(C_{A,G,q}S^{3/(2q)}\). For example,
\[
\iint|H|(|h|+|u|)\le C_qS^{(3q-2)/q^2},\qquad
\iint(n|h|+n|u|+|h|^3+|h|^2|u|)\le C_qS^{3/q}.
\]
The force term has the same acceptable bound.
In the complete spatial integral, incompressibility allows
subtraction of \((\pi)_{B_1}(t)\), so that
\begin{equation}\label{eq:pressure-fixed}
\left|\iint(\pi-(\pi)_{B_1})u\cdot\nabla\psi\right|
\le C\Pi\|u\|_{L_t^2L_x^{3/2}(Q_1)}
\le C(\Pi^2+S^{2/q}).
\end{equation}

The terminal entropy term in \eqref{eq:joint-local} is not nonnegative,
since $H=n\log n<0$ where $0<n<1$. We therefore cannot
discard its contribution outside the inner ball.
Set
\[
H_-=\max\{-H,0\}.
\]
For a fixed cutoff $\psi$, the mass identity \eqref{eq:local-mass} gives
\[
\int n\psi(\tau)\,dx\le C_A S^{2/q}.
\]
Since $H_-\le Cn^{3/4}$, H\"older's inequality gives,
for every nonnegative integrable weight $\omega$,
\begin{equation}\label{eq:negative-local}
\int H_-\omega
\le C\left(\int n\omega\right)^{3/4}
      \left(\int\omega\right)^{1/4}.
\end{equation}
Apply \eqref{eq:negative-local} with $\omega=\psi(\tau)$ on the full support
of the cutoff. Since $\psi$ is fixed, its integral is
bounded, and therefore
\[
\int H_-\psi(\tau)\,dx\le C_A S^{3/(2q)}.
\]
We add $2\int H_-\psi(\tau)\,dx$ to both sides of \eqref{eq:joint-local}.
The terminal entropy on the left then becomes
\[
\int (H+2H_-)\psi(\tau)\,dx
=
\int |H|\psi(\tau)\,dx.
\]
All terms on the left are now nonnegative. Choosing the
fixed cutoffs to equal one on $Q_{1/2}$, we may restrict
these terms to the inner cylinder. Combining
\eqref{eq:H-fixed}, \eqref{eq:pressure-fixed}, and
\eqref{eq:negative-local} with \eqref{eq:joint-local}
and \eqref{eq:local-mass} yields
\begin{equation}\label{eq:fixed-energy}
\begin{split}
&\sup_{-1/4<t<0}\int_{B_{1/2}}
       (n+|H|+|h|^2+|u|^2)\\
&\qquad+\iint_{Q_{1/2}}
       (|\nabla\sqrt n|^2+|\nabla h|^2+|\nabla u|^2)
\le d_0,\qquad
d_0=C_{A,G,q}(S^{3/(2q)}+\Pi^2).
\end{split}
\end{equation}
Every supremum is first taken below an arbitrary \(\tau<0\);
the constants are independent of \(\tau\).

\subsection{Pressure oscillation and backward-kernel induction}

Define the auxiliary quantity
\begin{equation}\label{eq:Er}
E(r)=r^{-3}\left[
\sup_{-r^2<t<0}\int_{B_r}(n+|H|+|h|^2+|u|^2)
+\iint_{Q_r}(|\nabla\sqrt n|^2+|\nabla h|^2+|\nabla u|^2)
\right].
\end{equation}
Set
\begin{equation}\label{eq:dyadic-parameters}
r_k=2^{-k-3}\quad(k\ge0),\qquad r_{-1}=\frac14,\qquad
m_0=C_0d_0^{1/2},
\end{equation}
where \(d_0\) is defined in \eqref{eq:fixed-energy} and \(C_0\)
will be fixed below. We prove
\[
E(r_j) \leq m_0
\]
by induction. The initial bounds at $r_{-1}$ and $r_0$ follow from \eqref{eq:fixed-energy}, provided $d_0$ is sufficiently small. Fix $j \geq 1$, and assume that
\begin{equation}\label{eq:dyadic-hypothesis}
E(r_k) \leq m_0,\qquad -1 \leq k \leq j-1.
\end{equation}
We now estimate the terms needed to prove the same bound at $r_j$. All pressure estimates below use only radii covered by
\eqref{eq:dyadic-hypothesis}.
If \(E(r)\le m_0\le1\), the definition \eqref{eq:Er} and
the local form of \eqref{eq:parabolic-Sobolev} (with the additional
\(r^{-2}\|f\|_2^2\) term in the spatial Sobolev estimate on a ball)
give
\begin{align}
\|n\|_{L^{5/3}(Q_r)}&\le Cm_0r^3,&
\|h\|_{L^{10/3}(Q_r)}+\|u\|_{L^{10/3}(Q_r)}
&\le Cm_0^{1/2}r^{3/2},\label{eq:dyadic-Sob}\\
\|n\|_{L_t^2L_x^{3/2}(Q_r)}&\le Cm_0r^3,&
\|u\|_{L_t^2L_x^{3/2}(Q_r)}&\le Cm_0^{1/2}r^3.
\label{eq:dyadic-mixed}
\end{align}
For the first mixed estimate, integrate
\[
\|n(t)\|_{L^{3/2}(B_r)}^2
\le C\left(\int_{B_r}n\right)
\left(\int_{B_r}|\nabla\sqrt n|^2+r^{-2}\int_{B_r}n\right).
\]
The velocity mixed estimate follows from finite-volume embedding,
the time-slice \(L^2\) bound, and the interval length.
Similarly,
\begin{equation}\label{eq:dyadic-entropy-bound}
\|H\|_{L^{3/2}(Q_r)}\le Cm_0^{8/9}r^{10/3},
\end{equation}
using \(|z\log z|^{3/2}\le C(z^{4/3}+z^{5/3})\).
Combining \eqref{eq:dyadic-Sob} with
\eqref{eq:dyadic-entropy-bound} by H\"older's inequality gives
\begin{equation}\label{eq:dyadic-flux}
\begin{aligned}
r^{-4}\iint_{Q_r}|H|(|h|+|u|)
&\le Crm_0^{25/18},\\
r^{-4}\iint_{Q_r}(n|h|+n|u|+|h|^3+|h|^2|u|)
&\le Crm_0^{3/2},\\
r^{-3}\iint_{Q_r}n|F||u|
&\le CG r^2m_0^{3/2}.
\end{aligned}
\end{equation}
All radius powers on the right are summable over dyadic scales.

We estimate the pressure at \(R=r_k\) using only \(E(2R)\)
and larger-scale information. Taking the divergence of Stokes gives
\(\Delta\pi=\nabla\cdot(nF)\).
Choose \(\chi\in C_c^\infty(B_{1/2})\), equal to one on
\(B_{3/8}\), and write
\begin{equation}\label{eq:pressure-decomposition}
\pi^{\rm loc}=\Delta^{-1}\nabla\cdot(\chi nF),\qquad
\pi^{\rm har}=\pi-\pi^{\rm loc}.
\end{equation}
The potential estimate \eqref{eq:pressure-potential}, applied to
\eqref{eq:pressure-decomposition}, and \eqref{eq:fixed-energy} give
\[
\|\pi^{\rm loc}\|_{L^2(-1/4,0;L^3(\R^3))}\le CGd_0.
\]
Choose the gauge \((\pi)_{B_1}(t)=0\).
The remainder is harmonic on \(B_{3/8}\); hence
\begin{equation}\label{eq:harmonic-pressure}
\|\nabla\pi^{\rm har}\|_{L^2(-1/4,0;L^\infty(B_{1/4}))}
\le C(\Pi+Gd_0).
\end{equation}

Split the local potential into its source in \(B_{2R}\) and
the complementary source. The near contribution satisfies
\begin{equation}\label{eq:near-pressure}
\|\pi^{\rm near}\|_{L_t^2L_x^3(Q_R)}
\le CG\|n\|_{L^2(-R^2,0;L^{3/2}(B_{2R}))}
\le CGm_0R^3.
\end{equation}
This uses the parent energy \(E(2R)\).
For \(x\in B_R\), the gradient kernel of the far potential has
size \(C|x-y|^{-3}\). At a time \(t\in(-R^2,0)\), the larger
dyadic mass bounds therefore give
\begin{equation}\label{eq:far-pressure-gradient}
|\nabla\pi^{\rm far}(x,t)|
\le CG\sum_{\rho=2R,4R,\ldots}
 \rho^{-3}\int_{B_{2\rho}}n(t)
\le CGm_0\big(1+\log(r_0/R)\big).
\end{equation}
The final fixed annulus is controlled by \eqref{eq:fixed-energy}.
In this calculation every time belongs to the larger cylinder
whose mass bound is used.
Equations \eqref{eq:harmonic-pressure}, \eqref{eq:near-pressure},
and \eqref{eq:far-pressure-gradient} yield
\begin{equation}\label{eq:pressure-decay}
\begin{split}
\|\pi-(\pi)_{B_R}(t)\|_{L_t^2L_x^3(Q_R)}
&\le CGm_0R^3\big(1+\log(r_0/R)\big)\\
&\quad+CR^2(\Pi+Gd_0).
\end{split}
\end{equation}
For example, a harmonic contribution satisfies
\(\|f-(f)_{B_R}\|_{L^3(B_R)}
\le CR^2\|\nabla f\|_{L^\infty(B_R)}\).
For the far contribution there is an additional factor \(R\)
from its time \(L^2\) norm.
Pairing \eqref{eq:pressure-decay} with
\eqref{eq:dyadic-mixed}, we find
\begin{equation}\label{eq:pressure-flux}
\begin{split}
R^{-4}\iint_{Q_R}|\pi-(\pi)_{B_R}(t)|\,|u|
&\le CGm_0^{3/2}R^2\big(1+\log(r_0/R)\big)\\
&\quad+Cm_0^{1/2}R(\Pi+Gd_0).
\end{split}
\end{equation}

We now prove \Cref{prop:point}. Let
\begin{equation}\label{eq:backward-kernel}
\Psi_j(x,t)=[4\pi(r_j^2-t)]^{-3/2}
\exp\!\left(-\frac{|x|^2}{4(r_j^2-t)}\right),\qquad
\psi_j=\Psi_j\xi,
\end{equation}
where \(0\le\xi\le1\) is supported in
\(B_{1/4}\times(-1/16,0]\), vanishes near the lower time face,
and equals one on \(Q_{1/8}\). The kernel in
\eqref{eq:backward-kernel} satisfies
\begin{equation}\label{eq:kernel-test}
\psi_j\ge cr_j^{-3}\ \hbox{on }Q_{r_j},\qquad
\int\psi_j(t)\le1,\qquad \partial_t\Psi_j+\Delta\Psi_j=0.
\end{equation}
The commutator \(\partial_t\psi_j+\Delta\psi_j\) is uniformly
bounded on a fixed outer region. On a parabolic annulus with
radius \(r_k\),
\begin{equation}\label{eq:kernel-annular-bounds}
|\psi_j|\le Cr_k^{-3},\qquad |\nabla\psi_j|\le Cr_k^{-4}.
\end{equation}
For the innermost part, use the whole parent cylinder
\(Q_{r_{j-1}}\). Bounds with \(r_j\) and \(r_{j-1}\) differ only
by fixed factors.

The pressure requires a smooth partition of the full test.
Choose smooth parabolic cutoffs \(\chi_k\), supported in
\(Q_{r_k}\) and equal to one on \(Q_{r_{k+1}}\). The identity
\begin{equation}\label{eq:pressure-partition}
\psi_j=(1-\chi_0)\psi_j+
\sum_{k=0}^{j-2}(\chi_k-\chi_{k+1})\psi_j
+\chi_{j-1}\psi_j
\end{equation}
is exact. For each summand \(\varphi\) and each function
\(a(t)\), incompressibility gives
\[
\int a(t)u\cdot\nabla\varphi=0.
\]
We may therefore subtract a separate spatial pressure average
in each complete summand before taking absolute values.
The derivative of a middle summand
is supported in \(Q_{r_k}\setminus Q_{r_{k+2}}\) and has size
at most \(Cr_k^{-4}\). This includes derivatives of both
cutoffs. The core uses the already controlled radius
\(r_{j-1}\), and its near pressure uses \(r_{j-2}\).
Thus the pressure estimate at the new radius \(r_j\) uses only
the previously controlled parent scales and does not assume the
energy bound being proved at \(r_j\).
Summing \eqref{eq:pressure-flux}, and using the fixed outer
estimate, gives
\begin{equation}\label{eq:pressure-sum}
\left|\iint_{t<\tau}\pi u\cdot\nabla\psi_j\right|
\le C\left[\Pi d_0^{1/2}+Gm_0^{3/2}
+m_0^{1/2}(\Pi+Gd_0)\right].
\end{equation}
The constants are uniform in \(j\) and \(\tau<0\).

The other fluxes are summed using \eqref{eq:dyadic-flux}
and the annular bounds \eqref{eq:kernel-annular-bounds}.
Their total is at most
\(C_{A,G}(d_0+m_0^{25/18}+m_0^{3/2})\).
The fixed outer region is controlled by
\eqref{eq:fixed-energy} and fixed-radius interpolation.
Using \eqref{eq:local-mass} with the same kernel weight gives
\begin{equation}\label{eq:weighted-mass}
\sup_{\tau<0}\int n\psi_j(\tau)\le C_A(d_0+m_0^{3/2}).
\end{equation}
Thus \eqref{eq:negative-local} and \(\int\psi_j\le1\) imply,
on the full support of the test,
\begin{equation}\label{eq:weighted-negative}
\int H_-\psi_j(\tau)\le C_A(d_0+m_0^{3/2})^{3/4}.
\end{equation}
As in the fixed-scale estimate, we add
$2\int H_-\psi_j(\tau)\,dx$ to both sides of \eqref{eq:joint-local}
before restricting to the smaller cylinder.
Estimate \eqref{eq:weighted-negative} controls the additional term on the
right, while the terminal entropy on the left becomes
$\int |H|\psi_j(\tau)\,dx$.

Inserting \eqref{eq:pressure-sum}--\eqref{eq:weighted-negative}
into \eqref{eq:joint-local}, taking time suprema, and using
\eqref{eq:kernel-test}, we obtain
\begin{equation}\label{eq:induction}
\begin{split}
E(r_j)\le C_{A,G,q}\big[
&d_0+m_0^{25/18}+m_0^{3/2}
+\Pi m_0^{1/2}+Gd_0m_0^{1/2}\\
&+(d_0+m_0^{3/2})^{3/4}\big].
\end{split}
\end{equation}
The term \(\Pi d_0^{1/2}\) has been absorbed into \(Cd_0\).
Recall the choice \(m_0=C_0d_0^{1/2}\) in
\eqref{eq:dyadic-parameters}, with \(C_0\) fixed. The two initial scales \(r_{-1},r_0\) are controlled
by \eqref{eq:fixed-energy}. Since \(\Pi\le Cd_0^{1/2}\),
the right-hand side of \eqref{eq:induction} is at most
\(C_{A,G,q,C_0}m_0^{9/8}\) when \(m_0\le1\).
Choosing \(d_0\) small closes the induction:
\(E(r_j)\le m_0\) for every \(j\).
No energy at the new scale is used on the right-hand side.
Taking the time limit to the smooth top point and then letting
\(j\to\infty\), spatial continuity gives
\[
n(0,0)+|h(0,0)|^2+|u(0,0)|^2
\le Cm_0\le C_{A,G,q}(S^{3/(4q)}+\Pi).
\]
This proves \Cref{prop:point}.

\subsection{Removing the gradient input and treating an open top}

We prove the unit-cylinder form of \Cref{thm:epsilon}.
Set \(p=q/2\in(3/2,5/3]\),
\(K_p=\|n\|_{L^p(Q_1)}\), and \(U=\|u\|_{L^5(Q_1)}\).
For sufficiently small \(U\), depending only on \(p\),
\begin{equation}\label{eq:interior-MR}
\|D^2c\|_{L^p(Q_{7/8})}+\|c_t\|_{L^p(Q_{7/8})}
\le C_p A(1+K_p).
\end{equation}
We detail the critical-drift absorption. Choose a space-time
cutoff \(\eta\) with compact spatial support and vanishing near
the lower time face, and let \(w=\eta c\). Extend \(w\) by zero
in space. Its heat equation is
\begin{equation}\label{eq:cutoff-signal-heat}
w_t-\Delta w=-u\cdot\nabla w-\eta nc
+c(\eta_t-\Delta\eta+u\cdot\nabla\eta)
-2\nabla\eta\cdot\nabla c.
\end{equation}
Fix a strict upper time \(\tau<0\), and set
\(g=w_t-\Delta w\) on \(\R^3\times(-1,\tau)\).
Extend \(g\) by zero outside this interval, writing the extension
as \(\widetilde g\), and define the causal heat potential
\[
\widetilde w(t)=\int_{-\infty}^{t}
e^{(t-s)\Delta}\widetilde g(s)\dd s.
\]
Heat-equation uniqueness and the zero lower time trace imply
\(\widetilde w=w\) for \(-1<t<\tau\).
The Calder\'on--Zygmund estimate for the derivatives of the
heat kernel gives the maximal \(L^p\) estimate below.
The gradient heat kernel is bounded by a parabolic potential
kernel of order one;
the parabolic Hardy--Littlewood--Sobolev inequality in homogeneous
dimension five gives the second estimate, for \(1<p<5\).
All norms in the next two displays are over
\(\R^3\times(-1,\tau)\):
\begin{equation}\label{eq:causal-heat-MR}
\|w_t\|_{L^p}+\|D^2w\|_{L^p}
\le C_p\|g\|_{L^p}.
\end{equation}
\begin{equation}\label{eq:parabolic-gradient-embedding}
\|\nabla w\|_{L^{5p/(5-p)}}\le
C_p(\|w_t\|_p+\|D^2w\|_p).
\end{equation}
Both constants are independent of \(\tau\): the estimates are
first applied to the causal potential on the full space-time
and then restricted. The extension is made on the right-hand
side \(g\), so no zero upper time trace of \(w\) is required.
For the linear parabolic \(L^p\) theory, see
\cite[Chapter~IV, \S9]{LSU1968}.
By H\"older's inequality and \eqref{eq:parabolic-gradient-embedding},
\[
\|u\cdot\nabla w\|_p
\le C_pU(\|w_t\|_p+\|D^2w\|_p).
\]
Combining this with \eqref{eq:causal-heat-MR}, and enlarging
\(C_p\) if necessary, choose \(C_pU<1/2\).
The cutoff-gradient terms are handled on
nested interior cylinders by spatial interpolation,
\[
\|\nabla c\|_p\le\epsilon\|D^2c\|_p+C_{\epsilon}\|c\|_p .
\]
Fix $-(7/8)^2<\tau<0$. For $7/8\le r<1$, define
\begin{equation}\label{eq:MR-truncated-norm}
Q_r^\tau=B_r\times(-r^2,\tau),
\qquad
X_\tau(r)
=
\|D^2c\|_{L^p(Q_r^\tau)}
+
\|\partial_t c\|_{L^p(Q_r^\tau)}.
\end{equation}
The cutoff estimates give, for $7/8\le r<R<1$,
\begin{equation}\label{eq:MR-nested-estimate}
X_\tau(r)
\le
\theta X_\tau(R)
+
C_{\theta,p}A(1+K_p)(R-r)^{-m},
\end{equation}
where $m$ is a fixed finite number and the constant is
independent of $\tau$. 
Fix $r_* = 15/16$, and set
\[
r_j = r_* - \left(r_* - \frac{7}{8}\right)2^{-j},\qquad j\geq 0.
\]
Then $r_0 = 7/8$, $r_j \uparrow r_*$, and
\[
r_{j+1}-r_j = \left(r_*-\frac{7}{8}\right)2^{-j-1}.
\]
Iterating \eqref{eq:MR-nested-estimate}, with the truncated
norms from \eqref{eq:MR-truncated-norm}, gives
\begin{equation}\label{eq:MR-iteration}
X_\tau(7/8) \le \theta^J X_\tau(r_J) + C_{\theta,p}A(1+K_p)\sum_{j=0}^{J-1}(\theta 2^m)^j.
\end{equation}
Choose $\theta$ so that $\theta 2^m <1$. For each fixed $\tau<0$, the quantity $X_\tau(r_*)$ is finite because the solution is classical strictly before the terminal time. Hence the first term tends to zero as $J\to\infty$. The resulting bound in \eqref{eq:MR-iteration} is independent
of \(\tau\), which proves \eqref{eq:interior-MR} after letting
\(\tau\uparrow0\).
In particular, the limiting argument uses only estimates at
strict upper times and the uniform constants in
\eqref{eq:causal-heat-MR}--\eqref{eq:parabolic-gradient-embedding}.

Spatial interpolation with the \(L^\infty\) bound for \(c\) now gives
\begin{equation}\label{eq:c-gradient-L2p}
\|\nabla c\|_{L^{2p}(Q_{3/4})}
\le C_pA(1+K_p)^{1/2}.
\end{equation}
Indeed, apply
\(\|\nabla f\|_{2p}^{2p}\le C_p\|f\|_\infty^p\|D^2f\|_p^p\)
to a spatial cutoff of \(c\), and use
\eqref{eq:interior-MR} for the resulting terms.
As \(p<5/2\), one has \(5p/(5-p)<2p\). If \(K_p\le1\), then
\begin{equation}\label{eq:drift-source}
\|u\cdot\nabla c\|_{L^p(Q_{3/4})}\le C_{A,p}U .
\end{equation}

On \(Q_{3/4}\), write \(c=a+w\), where \(a\) is caloric with
the same parabolic boundary data as \(c\), and \(w\) has zero
parabolic boundary values. Then
\begin{equation}\label{eq:caloric-decomposition}
w_t-\Delta w=-nc-u\cdot\nabla c,\qquad 0\le a\le A,\qquad
|w|\le A.
\end{equation}
The parabolic boundary here consists of the lower and lateral
faces; no upper-time data are prescribed. Dirichlet maximal
\(L^p\) regularity on the fixed smooth ball
\cite[Chapter~IV, \S9]{LSU1968} and \eqref{eq:drift-source} give
\[
\iint_{Q_{3/4}}|D^2w|^p\le C_{A,p}(K_p^p+U^p).
\]
To see that this estimate is uniform at an open top, first
truncate at \(\tau<0\), extend the source by zero from \(\tau\)
to zero, and solve the zero-data Dirichlet problem on the fixed
time interval. Its restriction agrees with \(w\) before
\(\tau\). The fixed-cylinder estimate is independent of
\(\tau\), and passage to \(\tau\uparrow0\) gives the displayed
bound.
On each time slice, integration by parts using the zero spatial
trace gives
\begin{equation}\label{eq:zero-trace-gradient}
\int|\nabla w|^{2p}\le
C_p\|w\|_\infty^p\int|D^2w|^p.
\end{equation}
One can obtain this by integrating
\(-w\,\diver(|\nabla w|^{2p-2}\nabla w)\) and then applying
H\"older's inequality. Applying \eqref{eq:zero-trace-gradient}
to the zero-boundary component in \eqref{eq:caloric-decomposition}
therefore gives
\begin{equation}\label{eq:w-small}
\iint_{Q_{3/4}}|\nabla w|^{2p}
\le C_{A,p}(K_p^p+U^p).
\end{equation}
Interior caloric estimates give \(|\nabla a|\le CA\) on \(Q_{1/2}\).
For \(0<\kappa\le1/4\), it follows that
\begin{equation}\label{eq:gradient-reduction}
\kappa^{2p-5}\iint_{Q_\kappa}
 (n^p+|\nabla\sqrt{1+c}|^{2p})
\le C_{A,p}\kappa^{2p-5}(K_p^p+U^p)
+C_pA^{2p}\kappa^{2p}.
\end{equation}
The rescaled velocity and pressure satisfy
\begin{equation}\label{eq:rescaled-fluid-input}
\kappa^{2p-5}\iint_{Q_\kappa}|u|^{2p}\le C_pU^{2p},
\qquad
\|\pi_\kappa-(\pi_\kappa)_{B_1}\|_{L_s^2L_y^3(Q_1)}
\le2\Pi .
\end{equation}
The radius powers in the first estimate cancel by H\"older's
inequality. The second uses the mixed-norm scaling and the
boundedness of the spatial averaging operator on \(L^3\).

Choose \(\kappa\) first so that the last term of
\eqref{eq:gradient-reduction} is sufficiently small for
\Cref{prop:point}. Then choose \(K_p^p+U+\Pi\) small enough
for all remaining inputs and for the drift absorption.
The scaled force is bounded by \(G\), after taking
\(\kappa\le1\). The point estimate yields
\begin{equation}\label{eq:point-density-only}
n(0,0)+|\nabla c(0,0)|^2+|u(0,0)|^2\le C_{A,G,p}.
\end{equation}

Finally let \(z=(x,t)\in Q_{R/2}(z_0)\), with \(t<t_0\), and
take \(\rho=R/2\). Then \(Q_\rho(z)\subset Q_R(z_0)\), and
the top point \(z\) is classical. The inputs satisfy
\[
\Acal_p(\rho;z)\le2^{5-2p}\Acal_p(R;z_0),\qquad
U(\rho;z)\le U(R;z_0),\qquad P(\rho;z)\le2P(R;z_0).
\]
The pressure bound follows by subtracting the outer-ball mean
and applying Jensen's inequality to the inner-ball mean.
Also \(\rho\|F\|_\infty\le G\).
After decreasing \(\varepsilon_*\) by fixed factors, apply
\eqref{eq:point-density-only} at each such \(z\) and rescale.
The resulting bound is uniform in \(z\). This proves
\Cref{thm:epsilon} on the open-top cylinder. 

\section{Terminal signal and normalized density compactness}
\label{sec:terminal-normalization}

This section prepares the two objects needed to analyze a possible
terminal concentration. First, we define the signal value at every
terminal point using a finite absorption potential and prove that
its backward tangent is constant. Second, we establish a density
entropy estimate that is uniform even when the coefficient of
signal consumption becomes arbitrarily large under amplitude
normalization. These are separate estimates: the terminal signal
argument uses positivity and the critical drift kernel, while the
normalized density estimate uses the exact entropy cancellation.
Their combination will be used only in
Section~\ref{sec:concentration}.

The main new estimate in the second part is
\eqref{eq:normalized-gain}. Its proof is organized around
\eqref{eq:normalized-identity} and \eqref{eq:Y-absorb}, which show
explicitly why its constant is independent of the normalization
parameter. Throughout this section all integrations may be
truncated strictly below the upper time, with constants unchanged.

\subsection{The terminal signal and its backward tangent}
\label{sec:terminal}

Let \(T<\infty\), and suppose that the solution is classical for \(t<T\). Since continuity at \(T\) is not yet known, the value \(c(T,x_0)\) is not available. We define a terminal value instead by the heat-kernel representation of the signal. We then prove that the rescaled signals converge to this value in \(L^a(Q_1)\) for every finite \(a\ge1\).

\begin{Prop}\label{prop:terminal}
Let \(T<\infty\), let \(u\) be divergence-free with
\(u\in L^\infty(0,T;L^3(\R^3))\), and suppose
\[
c_t-\Delta c+u\cdot\nabla c=-nc,\qquad
0\le c\le A,\qquad n\ge0,\qquad
n\in L^1(\R^3\times(0,T)).
\]
Assume \(c,n,u\) are classical for \(0<t<T\), and that
\(n,u,\nabla c\) are bounded on each strip
\(\R^3\times[t_b,\tau]\) with \(0<t_b<\tau<T\).
No uniform bound as \(\tau\uparrow T\) is assumed.
There is an upper semicontinuous representative \(c_*\) on
\(\R^3\times(0,T]\), relative to this set, which equals \(c\)
for \(t<T\) and takes values in \([0,A]\). At every \(x_0\),
for every finite \(a\ge1\),
\begin{equation}\label{eq:constant-tangent}
c(x_0+ry,T+r^2s)\longrightarrow c_*(T,x_0)
\quad\hbox{strongly in }L^a(Q_1),\qquad r\downarrow0 .
\end{equation}
\end{Prop}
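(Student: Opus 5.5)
We will build \(c_*\) from a Duhamel representation of the signal that has a nonnegative absorption term, using the Gaussian fundamental solution \(\Gamma_u(x,t;y,s)\) of \(\partial_t-\Delta+u\cdot\nabla\). Since \(u\in L^\infty(0,T;L^3)\) is divergence-free, Qian--Xi \cite{QianXi2019} applies. It gives a kernel on \((0,T]\), with two-sided Gaussian bounds
\[
C^{-1}(t-s)^{-3/2}e^{-C|x-y|^2/(t-s)}\le\Gamma_u\le C(t-s)^{-3/2}e^{-|x-y|^2/(C(t-s))},
\]
and with Hölder continuity of \(\Gamma_u\) in \((x,t)\) away from the diagonal. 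We use a base time \(t_b\in(0,T)\). Because \(n,u,\nabla c\) are bounded on \([t_b,\tau]\), the classical solution is represented, for \(t<T\), as
\[
c(x,t)=\int\Gamma_u(x,t;y,t_b)c(y,t_b)\,dy-\int_{t_b}^t\!\!\int\Gamma_u(x,t;y,s)\,n c(y,s)\,dy\,ds=:h(x,t)-\Phi(x,t).
\]
The homogeneous part \(h\) is continuous up to \(t=T\), because \(c(t_b)\) is bounded and \(\Gamma_u\) is continuous. The potential \(\Phi\) has a nonnegative integrand. By Fatou's lemma and the continuity of \(\Gamma_u\), \(\Phi\) extends to \(t=T\) as a lower semicontinuous function with values in \([0,\infty]\).

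The first step is to show that \(\Phi(x_0,T)\le h(x_0,T)\le A\). This is the finiteness of the potential. Pass to the limit along \(t\uparrow T\) using \(c\ge0\): we have \(\Phi(x_0,t)=h(x_0,t)-c(x_0,t)\le A\), and lower semicontinuity (Fatou) gives \(\Phi(x_0,T)\le\liminf\Phi\le A\). We then set \(c_*=h-\Phi\). It is upper semicontinuous, it equals \(c\) for \(t<T\), and it lies in \([0,A]\) at \(T\). The upper bound holds since \(\Phi\ge0\) and \(h\le A\). The lower bound is the limsup of \(c\ge0\) along suitable sequences, or follows directly once the tangent convergence is established.

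For \eqref{eq:constant-tangent}, we split the rescaled signal into the rescaled \(h\) and the rescaled \(\Phi\). Continuity of \(h\) at \((x_0,T)\) gives uniform convergence of \(h(x_0+ry,T+r^2s)\) to \(h(x_0,T)\) on \(Q_1\). Since everything is bounded by \(A\), it suffices to prove \(L^1(Q_1)\) convergence of \(\Phi_r(y,s)=\Phi(x_0+ry,T+r^2s)\) to \(\Phi(x_0,T)\). We split the source of \(\Phi\) at time \(T-\rho^2\), with \(\rho\gg r\) fixed first.

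The far part uses sources with \(s<T-\rho^2\). Its kernel is continuous at \((x_0,T)\), and it is dominated via the upper Gaussian bound and \(n\in L^1\). It therefore converges pointwise and uniformly on \(Q_1\) as \(r\to0\) to the corresponding piece of \(\Phi(x_0,T)\).

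The near part uses sources in \((T-\rho^2,T)\). The lower Gaussian bound, together with finiteness of \(\Phi(x_0,T)\), shows that the near part of \(\Phi(x_0,T)\) is small for small \(\rho\). We then control the averaged near part of \(\Phi_r\) over \(Q_1\). Averaging \(\Gamma_u(x_0+ry,T+r^2s;\cdot)\) over \((y,s)\in Q_1\) produces a kernel. For source points at parabolic distance \(\gg r\), this kernel is comparable, via the upper bound, to \(\Gamma_u(x_0,T;\cdot)\). For sources within distance \(\lesssim r\), it is bounded by \(C r^{-3}\). The first contribution is bounded by a constant times the near piece of \(\Phi(x_0,T)\), which is small. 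The second contribution is bounded by \(A r^{-5}\iint_{Q_{Cr}(x_0,T)}n\cdot r^2\). The parabolic Harnack-type comparison of the two Gaussian bounds shows this is again dominated by \(C\) times the near piece of \(\Phi(x_0,T)\): the lower bound gives \(\Gamma_u(x_0,T;y,s)\gtrsim r^{-3}\) for such sources. Here we would need \(c\) bounded below near the source, which fails, so this is the obstacle. To get around it, we estimate instead \(\iint \Gamma_u(x_0,T;\cdot)\,nc\) with \(c\) kept inside both integrals. The averaged kernel is pointwise \(\le C\,\Gamma_u(x_0,T;y,s)\) whenever \(T-s\gtrsim r^2\), and the remaining slab \(s>T-Cr^2\) is handled by the bound \(\int_{Q_1}\Gamma_u\,dy\,ds\le C r^{-3}r^{3}\) per unit source mass.

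Both steps therefore reduce to a single kernel comparison. This comparison, uniform in \(r\) for the averaged kernel versus the terminal kernel, is the main obstacle. It is where both Gaussian bounds of \cite{QianXi2019} are used, and it requires tracking the different constants in the two exponents. The conclusion is that \(\|\Phi_r-\Phi(x_0,T)\|_{L^1(Q_1)}\le o_\rho(1)+o_r(1)\), which yields \eqref{eq:constant-tangent}.
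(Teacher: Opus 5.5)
Your construction of $c_*$ matches the paper's. The homogeneous part $h$ is continuous up to $T$, $\Phi$ is lower semicontinuous by Fatou, and $\Phi(x,T)\le\liminf_{t\uparrow T}\Phi(x,t)\le\lim_{t\uparrow T}h(x,t)=h(x,T)\le A$ because $c\ge0$. This chain already gives $c_*(T,\cdot)\ge0$, so nothing needs to be deferred.

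The gap is in the tangent convergence \eqref{eq:constant-tangent}. You reduce $\iint_{Q_1}\Phi_r\to|Q_1|\,\Phi(x_0,T)$ to a pointwise comparison between the averaged kernel $\bar K_r(z,\sigma)=r^{-5}\iint_{Q_r(x_0,T)}\Gamma_u(x,t;z,\sigma)\,dx\,dt$ and the terminal kernel $\Gamma_u(x_0,T;z,\sigma)$. You propose to read this comparison off the two Gaussian bounds, and they do not imply it. Take a source with $T-\sigma=Kr^2$ and $|z-x_0|=D\gg\sqrt K\,r$. The bounds give only
\[
\bar K_r(z,\sigma)\lesssim (Kr^2)^{-3/2}e^{-(D-r)^2/(CKr^2)},\qquad
\Gamma_u(x_0,T;z,\sigma)\gtrsim (Kr^2)^{-3/2}e^{-CD^2/(Kr^2)} .
\]
These allow a ratio that is unbounded as $D/(\sqrt K r)\to\infty$, because the exponents carry $1/C$ and $C$. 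Tracking constants cannot repair this; you would need an additional tool, such as a scale-invariant parabolic Harnack inequality for $\partial_t-\Delta+u\cdot\nabla$, which you do not invoke.

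The top slab is a second, independent problem. Your per-unit-source-mass bound leaves a term of size $r^{-3}\int_{T-Cr^2}^T\!\int e^{-|z-x_0|^2/(Cr^2)}\,nc$, which is a critical, scale-invariant concentration quantity. Finiteness of $\Phi(x_0,T)$ controls it only for sources with $T-\sigma\ge\theta r^2$. For $T-\sigma<\theta r^2$ and $|z-x_0|\sim r$, the terminal kernel is itself $O(r^{-3}\theta^{-3/2}e^{-1/(C\theta)})$, so it gives no information there. You offer no other reason for this term to vanish.

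The paper avoids both issues. It never compares kernels based at different observation points, and it never estimates sources near the top.
\begin{itemize}
\item The composition law gives the exact identity $\mathcal U(T,T-\tau)c(T-\tau)(x_0)-c_*(T,x_0)=R(\tau)$. Here $R(\tau)$ is the $(T-\tau,T)$ tail of the finite terminal potential, so $R(\tau)\to0$.
\item Upper semicontinuity of $c_*$, which you establish but never use in the convergence step, gives $c\le c_*(T,x_0)+\epsilon$ on a fixed backward neighborhood. The Gaussian upper bound then yields $\mathcal U(T,T-\tau)\bigl(c(T-\tau)-c_*(T,x_0)\bigr)_+(x_0)\le\epsilon+CAe^{-d^2/(C\tau)}$.
\item The identity $|z|=2z_+-z$ turns these two facts into $\mathcal U(T,T-\tau)|c(T-\tau)-c_*(T,x_0)|(x_0)\to0$.
\item Only the lower Gaussian bound is then needed, on the region $\{\sigma r^2\le T-t\le r^2,\ |x-x_0|\le r\}$ where $\Gamma(T,x_0;t,x)\ge m_\sigma r^{-3}$. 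This passes to $L^1(B_1\times(-1,-\sigma))$.
\item The top strip $-\sigma<s<0$ is discarded at cost $A|B_1|\sigma$, simply because $|c_r-c_*(T,x_0)|\le A$.
\end{itemize}
The missing idea is therefore to combine upper semicontinuity with this one-point weighted identity, and to discard observation points near the top rather than try to control sources there.
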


\subsubsection{The drift kernel and a finite terminal potential}

\begin{proof}
Extend \(u\) by zero for \(t\ge T\). This preserves
incompressibility and the \(L^\infty_tL^3_x\) bound and
requires no value of \(u\) at time \(T\).
We use the critical divergence-free drift theorem of Qian and Xi
\cite[Theorem~2, Corollary~17, and Theorem~18]{QianXi2019},
including the approximation construction immediately before
Theorem~18.
It supplies a nonnegative conservative fundamental solution
\(\Gamma(t,x;s,y)\), continuous for \(t>s\), satisfying the
evolution composition law and
\begin{equation}\label{eq:Gaussian}
\frac{e^{-C|x-y|^2/(t-s)}}{C(t-s)^{3/2}}
\le\Gamma(t,x;s,y)
\le\frac{C e^{-|x-y|^2/[C(t-s)]}}{(t-s)^{3/2}},
\qquad t>s ,
\end{equation}
where \(C\) depends only on dimension and
\(\|u\|_{L^\infty_tL^3_x}\).
Indeed, approximate the extended drift by smooth divergence-free
fields with uniformly bounded \(L^\infty_tL^3_x\) norms. The
uniform Gaussian and H\"older estimates give locally uniform
kernel convergence away from the diagonal. Gaussian tail bounds
allow passage to the limit in the conservativity identities and
the composition law:
\[
\begin{gathered}
\int_{\R^3}\Gamma(t,x;s,y)\dd y
=\int_{\R^3}\Gamma(t,x;s,y)\dd x=1,\\
\Gamma(t,x;s,y)=\int_{\R^3}
\Gamma(t,x;\sigma,z)\Gamma(\sigma,z;s,y)\dd z,
\qquad s<\sigma<t.
\end{gathered}
\]
Define
\begin{equation}\label{eq:drift-evolution}
\Ucal(t,s)g(x)=\int\Gamma(t,x;s,y)g(y)\dd y.
\end{equation}
Conservativity gives \(\Ucal(t,s)1=1\) for the operator
defined in \eqref{eq:drift-evolution}.
The Gaussian upper bound defines this operator for bounded
initial data, including a nonzero constant background.
This extension from the \(L^2\) initial-data representation in
\cite[Theorem~18]{QianXi2019} follows by spatial truncation;
it is not an additional assumption on the initial signal.

Fix \(t_b=T/2\) and let \(f=nc\in L^1\), extended by zero
beyond \(T\). Starting at this strictly preterminal time, set
\begin{equation}\label{eq:H-V}
\begin{split}
H_0(t,x)&=\Ucal(t,t_b)c(t_b)(x),\\
V_0(t,x)&=\int_{t_b}^t\int
            \Gamma(t,x;s,y)f(s,y)\dd y\dd s,
\qquad t_b<t\le T.
\end{split}
\end{equation}
This choice avoids any assumption about an initial trace at time
zero in the proposition. On each interval \([t_b,\tau]\),
\(\tau<T\), the classical bounded solution satisfies Duhamel's
formula \(c=H_0-V_0\). One may first use spatially truncated data
and source, then pass to the limit by the Gaussian bounds and
nonnegativity of \(f\). Uniqueness for the bounded transported
heat equation identifies the resulting representation with \(c\).
The function \(H_0\) is continuous for \(t>t_b\), including at
\(T\). For bounded, possibly nonintegrable \(c(t_b)\), this
follows by spatial truncation: off-diagonal kernel continuity
handles a fixed bounded region, and the Gaussian upper bound
controls its complement uniformly near any \((T,x)\).

Extend the kernel by zero for $t\le s$. This extension
is lower semicontinuous in $(t,x)$: for $t>s$ this follows
from kernel continuity, while at $t=s$ it follows from
nonnegativity and the assigned value zero. Since $f=nc\ge0$,
Fatou's lemma shows that $V_0$ is lower semicontinuous.
For $t<T$, the identity $c=H_0-V_0$ and the nonnegativity
of $c$ give
\[
0\le V_0(t,x)\le H_0(t,x)\le A.
\]
Combining this pointwise bound with lower semicontinuity
of $V_0$ and continuity of $H_0$, we obtain
\begin{equation}\label{eq:potential-finite}
V_0(T,x)\le\liminf_{t\uparrow T}V_0(t,x)
\le\lim_{t\uparrow T}H_0(t,x)=H_0(T,x)\le A .
\end{equation}
Thus $V_0(T,x)$ is finite for every $x\in\mathbb R^3$.
We can define
\begin{equation}\label{eq:cstar}
c_*(t,x)=H_0(t,x)-V_0(t,x),\qquad t_b<t\le T .
\end{equation}
For \(0<t\le t_b\), set \(c_*(t,x)=c(t,x)\).
The two definitions agree with the classical solution on the
overlap before \(T\). Thus \(c_*\) is upper semicontinuous on
\(\R^3\times(0,T]\), equals \(c\) before \(T\), and takes
values in \([0,A]\). A different choice of \(t_b\) gives the
same terminal value by the evolution composition law.

\subsubsection{Convergence on backward cylinders}

Fix \(x_0\) and write \(c_\star=c_*(T,x_0)\).
For \(0<\tau<T-t_b\), the composition law and Tonelli's theorem imply
\begin{equation}\label{eq:tail-potential}
\begin{split}
\Ucal(T,T-\tau)c(T-\tau)(x_0)-c_\star
&=\int_{T-\tau}^T\int
 \Gamma(T,x_0;s,y)f(s,y)\dd y\dd s\\
&=:R(\tau)\ge0,\qquad R(\tau)\longrightarrow0.
\end{split}
\end{equation}
The last limit is the time-tail limit of the finite integral
\(V_0(T,x_0)\).

For any \(\epsilon>0\), upper semicontinuity supplies \(d>0\)
such that
\begin{equation}\label{eq:terminal-upper-bound}
c(t,y)\le c_\star+\epsilon
\quad\hbox{if }|y-x_0|<d,\quad T-d^2<t<T.
\end{equation}
Conservativity, \eqref{eq:terminal-upper-bound}, and the
Gaussian upper bound \eqref{eq:Gaussian} yield
\begin{equation}\label{eq:terminal-positive-excess}
a(\tau):=\Ucal(T,T-\tau)(c(T-\tau)-c_\star)_+(x_0)
\le\epsilon+CAe^{-d^2/(C\tau)}.
\end{equation}
First let \(\tau\downarrow0\), then let \(\epsilon\downarrow0\).
It follows from \eqref{eq:terminal-positive-excess} that
\(a(\tau)\to0\).
Since \(|z|=2z_+-z\), \eqref{eq:tail-potential} gives
\begin{equation}\label{eq:weighted-convergence}
\omega(\tau):=
\Ucal(T,T-\tau)|c(T-\tau)-c_\star|(x_0)
=2a(\tau)-R(\tau)\longrightarrow0.
\end{equation}

Fix \(0<\sigma<1\). If \(\sigma r^2\le\tau\le r^2\) and
\(|y-x_0|\le r\), the lower bound in \eqref{eq:Gaussian} gives
\begin{equation}\label{eq:terminal-kernel-lower}
\Gamma(T,x_0;T-\tau,y)\ge m_\sigma r^{-3}
\end{equation}
with \(m_\sigma>0\).
Writing \(c_r(y,s)=c(x_0+ry,T+r^2s)\), we combine
\eqref{eq:weighted-convergence} and \eqref{eq:terminal-kernel-lower}
to obtain
\begin{equation}\label{eq:tangent-away-top}
\begin{split}
\int_{-1}^{-\sigma}\int_{B_1}|c_r-c_\star|
&=r^{-5}\int_{\sigma r^2}^{r^2}
 \int_{B_r(x_0)}|c(T-\tau,x)-c_\star|\dd x\dd\tau\\
&\le m_\sigma^{-1}\sup_{0<\tau\le r^2}\omega(\tau)
\longrightarrow0 .
\end{split}
\end{equation}
The remaining top strip contributes at most \(A|B_1|\sigma\).
Let \(r\downarrow0\) and then \(\sigma\downarrow0\).
This proves strong \(L^1\) convergence. For finite \(a\ge1\),
\[
|c_r-c_\star|^a\le A^{a-1}|c_r-c_\star|,
\]
which proves \eqref{eq:constant-tangent}. If \(A=0\),
the conclusion is immediate.
\end{proof}

\begin{Rem}\label{rem:tangent-scope}
The proposition does not assert \(L^\infty\) convergence.
At a point with \(c_\star>0\), it gives no positive lower
bound throughout a neighborhood. At a point with \(c_\star=0\),
upper semicontinuity does give an arbitrarily small-signal
backward neighborhood. These distinct consequences are used
in Section~\ref{sec:concentration}.
\end{Rem}

\subsection{Entropy uniform under amplitude normalization}
\label{sec:normalization}

The density normalization used later changes the signal
consumption coefficient. The next estimate is uniform in
that coefficient. The velocity is treated as a prescribed
divergence-free drift in this section.

\begin{Prop}\label{prop:uniform-entropy}
Let \(p_0>5/4\) and \(A,K_0<\infty\).
There is \(\eta_0=\eta_0(A,p_0,K_0)>0\) such that the following holds.
Suppose \(\lambda\ge1\) and \(N,c,b\) are classical on the
open-top cylinder \(Q_1\), with
\begin{equation}\label{eq:normalized-system}
\begin{cases}
N_s+b\cdot\nabla N=\Delta N-\nabla\cdot(N\nabla c),\\
c_s+b\cdot\nabla c=\Delta c-\lambda Nc,\qquad \nabla\cdot b=0.
\end{cases}
\end{equation}
Assume \(N>0\), \(0\le c\le A\), and
\begin{equation}\label{eq:normalized-hyp}
\iint_{Q_1}N^{p_0}\le K_0,\qquad
U:=\|b\|_{L^5(Q_1)},\qquad S_b:=\|\nabla b\|_{L^{5/2}(Q_1)},
\qquad U^2+S_b\le\eta_0.
\end{equation}
Then
\begin{equation}\label{eq:normalized-gain}
\sup_{-1/2<s<0}\int_{B_{1/2}}N(s)
+\iint_{Q_{1/2}}
\big(|\nabla\sqrt N|^2+N|\nabla c|^2+N^{5/3}\big)
\le C(A,p_0,K_0).
\end{equation}
In addition, the signal satisfies
\begin{equation}\label{eq:normalized-signal-gain}
\begin{split}
&\frac1\lambda\sup_{-1/2<s<0}
                  \int_{B_{1/2}}|\nabla c(s)|^2\\
&\quad+\iint_{Q_{1/2}}\left(
 |\nabla c|^2+\lambda Nc^2
 +\frac{|D^2c|^2+|\nabla c|^4}{\lambda}\right)
 \le C(A,p_0,K_0).
\end{split}
\end{equation}
The constants in \eqref{eq:normalized-gain} and
\eqref{eq:normalized-signal-gain} are independent of \(\lambda\),
of any pointwise density bound, and of the truncated upper time.
\end{Prop}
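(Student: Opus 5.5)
The approach is a localized version of the global computation in \Cref{lem:energy}, with the signal Fisher energy weighted by \(\lambda^{-1}\) so that the mixed derivatives cancel exactly, independently of \(\lambda\). Fix the shift \(v=c+\delta\) with \(\delta=1/4\) and a cutoff \(\psi\) equal to one on \(Q_{1/2}\), supported in \(Q_1\), and vanishing near the lower time face. Testing the \(N\)-equation with \((1+\log N)\psi\) gives the density entropy with the mixed term \(\int\nabla N\cdot\nabla v\,\psi\). For the signal, \(g=-\lambda N(v-\delta)\) in \eqref{eq:Fisher-identity}. The mixed term there is then \(-\lambda(1-\delta/v)\nabla N\cdot\nabla v\), and the dissipations are \(\lambda N|\nabla v|^2/(2v)\) and \(\lambda\delta N|\nabla v|^2/(2v^2)\). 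Multiplying the Fisher identity by \(\lambda^{-1}\) and adding it to the density entropy leaves, as in Step~3 of \Cref{lem:energy}, the \(\lambda\)-free identity
\[
\int\psi\Big(N\log N+\tfrac{|\nabla v|^2}{2\lambda v}\Big)(s)+\iint\psi\Big(\tfrac34\tfrac{|\nabla N|^2}{N}+\tfrac{v|D^2\log v|^2}{\lambda}+\tfrac{N|\nabla v|^2}{2v}\Big)\le\hbox{cutoff terms}+\hbox{strain}.
\]
This is the intended \eqref{eq:normalized-identity}. The point is that the coercive term \(N|\nabla c|^2\) carries no power of \(\lambda\), while the Hessian and Fisher terms carry \(\lambda^{-1}\).

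Next come the error terms. The strain \(\lambda^{-1}\iint\psi\,(\nabla v\otimes\nabla v):\nabla b/v\) is bounded as in Step~3, using a localized version of \eqref{eq:global-quartic} (as in \eqref{eq:quartic-local}). This gives \(\lambda^{-1}\iint\psi|\nabla v|^4/v^3\le C\lambda^{-1}\iint\psi v|D^2\log v|^2+C\lambda^{-1}\iint|\nabla v|^3|\nabla\psi|\). The strain is then at most \(\tfrac14\lambda^{-1}\)(Hessian dissipation) plus \(C\lambda^{-1}\iint|\nabla b|^2\)-type terms. Since only \(\nabla b\in L^{5/2}\) is available, I would instead pair it as \(\lambda^{-1}\|\nabla b\|_{5/2}\| |\nabla v|^2\psi\|_{5/3}\). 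Then parabolic Sobolev \eqref{eq:parabolic-Sobolev} applied to \(\nabla v\,\psi^{1/2}\) bounds this by \(S_b\) times \(\lambda^{-1}(\sup\int\psi|\nabla v|^2+\iint\psi|D^2v|^2)\) plus lower-order terms, which is absorbable for \(S_b\le\eta_0\). The transport terms \(\iint N\log N\, b\cdot\nabla\psi\) and \(\lambda^{-1}\iint|\nabla v|^2 b\cdot\nabla\psi\) are handled similarly with \(\|b\|_5\), using \(U^2\) against the quadratic energies.

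The cutoff terms of the density are \(\iint |N\log N|\,|\psi_s+\Delta\psi|\) and \(\iint N(1+|\log N|)|\nabla v||\nabla\psi|\). I would use \(|N\log N|\lesssim N^{p_0}+N^{3/4}\) with \(p_0>5/4\) to control the first by \(K_0\). The second is bounded by \(\epsilon\iint\psi N|\nabla v|^2\) plus \(C\iint N(1+\log^2N)|\nabla\psi|^2/\psi\), using a cutoff of the form \(\psi=\phi^2\). The latter needs \(N\log^2N\), which is controlled by \(N^{p_0}\) only up to logarithms. This is where the nontrivial step lies, and I would close it with an absorption inequality \eqref{eq:Y-absorb}. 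Set \(Y=\sup_s\int\psi N+\iint\psi(|\nabla\sqrt N|^2+N|\nabla c|^2)\). Local interpolation, that is, \eqref{eq:parabolic-Sobolev} applied to \(\sqrt N\phi\), gives \(\iint\psi N^{5/3}\le CY^{5/3}\). Interpolating \(N^{p}\log^2 N\) between \(N^{p_0}\) and \(N^{5/3}\) then yields errors bounded by \(C(K_0)Y^{\theta}\) with \(\theta<1\), so \(Y\le C+\tfrac12Y\).

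The \(\lambda^{-1}\)-weighted signal terms on the right-hand side need \(\lambda^{-1}\iint|\nabla v|^2|\psi_s+\Delta\psi|\). This is bounded by interpolating \(\|\nabla c\|_2^2\le A\|D^2 c\|_1\)-style with \(\epsilon\lambda^{-1}\iint|D^2c|^2\) plus \(C\lambda^{-1}A^2\le CA^2\), and this is where \(\lambda\ge1\) is used. The negative entropy at the terminal time is treated by \eqref{eq:negative-local} together with the local mass identity, as in \eqref{eq:fixed-energy}; the mass identity is closed by \(\iint N|\nabla c|\le(\iint N)^{1/2}(\iint N|\nabla c|^2)^{1/2}\) and is absorbed into \(Y\). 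This proves \eqref{eq:normalized-gain}.

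For \eqref{eq:normalized-signal-gain}, the \(\lambda^{-1}\) terms come out of the same inequality together with \eqref{eq:signal-Hessian-comparison}. The terms \(\iint(|\nabla c|^2+\lambda Nc^2)\) come from testing the \(c\)-equation with \(c\psi\): this gives \(\iint\psi(|\nabla c|^2+\lambda Nc^2)\le C A^2+\iint c\nabla c\cdot\nabla\psi+\iint c^2 b\cdot\nabla\psi\), all bounded by \(A\), \(U\), and Young's inequality. I expect the main obstacle to be arranging the \(S_b\) strain absorption and the logarithmic density flux so that every constant is free of \(\lambda\).
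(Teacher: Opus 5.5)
Your architecture is the paper's. You weight the shifted Fisher energy by $\lambda^{-1}$ so that the mixed term collapses to $\frac{\delta}{v}\nabla N\cdot\nabla v$. You bound the strain by $S_b\,\lambda^{-1}\|\psi^{1/2}\nabla v\|_{10/3}^2$ through parabolic Sobolev and the localized quartic estimate. The paper writes this with $W=\nabla v/\sqrt{\lambda v}$ and $Z=\sqrt\theta\,\eta W$, getting $\|Z\|_{10/3}^2\le C(Y_\tau+A^2(1+U))$. You then bound the Fisher transport by $U^2\|Z\|_{10/3}^2$ and absorb under small $U^2+S_b$. Two steps, however, do not close as you wrote them.

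\textbf{The $\lambda^{-1}$-weighted cutoff terms.} Three terms need care:
\begin{itemize}
\item $\lambda^{-1}\iint|\nabla v|^2|\psi_s+\Delta\psi|$;
\item the lower-order term $\lambda^{-1}\iint|\nabla v|^2|\nabla\psi^{1/2}|^2$ from parabolic Sobolev;
\item the cubic cutoff term in your quartic estimate, which after Cauchy--Schwarz becomes $\lambda^{-1}\iint|\nabla v|^2|\nabla\eta|^2$.
\end{itemize}
All three live where $\psi$ degenerates; the $\theta'\eta^2$ piece sits near the lower time face, where $\theta$ is small. Your Hessian dissipation is weighted by $\psi$, so the proposed interpolation against $\epsilon\lambda^{-1}\iint|D^2c|^2$ has no controlled Hessian to absorb into there.

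The paper's remedy is already in your last paragraph; it just has to come first and on a larger region. Test $L_b(c^2/2)=-|\nabla c|^2-\lambda Nc^2$ with an outer cutoff equal to one on $\supp\psi$. This gives $\iint_{\supp\psi}(|\nabla c|^2+\lambda Nc^2)\le CA^2(1+U)$ with no power of $\lambda$. Then $\lambda\ge1$ bounds every term above by $CA^2(1+U)$; that is where $\lambda\ge1$ is genuinely used.

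\textbf{The logarithmic flux.} $N\log^2N$ is not an obstacle: $z(\log z)^2\le C_{p_0}(1+z^{p_0})$ for every $p_0>1$, so that flux is bounded by $C(1+K_0)$ directly. Your proposed interpolation against $\iint\psi N^{5/3}\lesssim Y^{5/3}$ would not work anyway. That bound carries a power of the cutoff, which degenerates on $\supp\nabla\phi$, exactly where the error sits.

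The real use of $p_0>5/4$ is the density transport term: $\iint|N\log N||b||\nabla\psi|\le CU\|N\log N\|_{L^{5/4}(Q_1)}\le CU(1+K_0)^{4/5}$. This needs no absorption, so it should not be grouped with the Fisher transport under "$U^2$ against the quadratic energies."

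\textbf{Entropy choice.} The paper uses $h_0(N)=N\log N-N+1\ge0$ instead of $N\log N$. This removes the negative-entropy step entirely, since the time-slice mass follows from $N\le h_0(N)+e^2$. Your route through the local mass identity also works: with $\psi=\theta\phi^2$, Cauchy--Schwarz bounds $\iint N|\nabla c||\nabla\psi|$ by $Y^{1/2}(\iint N|\nabla\phi|^2)^{1/2}$, which absorbs. It is just longer.
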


\subsubsection{Signal budget and the exact shifted identity}

\begin{proof}
Write \(L_b=\partial_s-\Delta+b\cdot\nabla\).
First use
\begin{equation}\label{eq:normalized-signal-budget-identity}
L_b(c^2/2)=-|\nabla c|^2-\lambda Nc^2.
\end{equation}
Testing \eqref{eq:normalized-signal-budget-identity} with an
outer cutoff supported in \(Q_1\), equal to one on
\(B_{15/16}\times(-15/16,0)\), gives
\begin{equation}\label{eq:outer-signal}
\int_{-15/16}^{0}\int_{B_{15/16}}
 (|\nabla c|^2+\lambda Nc^2)\le CA^2(1+U).
\end{equation}
Indeed, the right-hand side after testing consists of
\((c^2/2)(\chi_s+\Delta\chi+b\cdot\nabla\chi)\).
The terminal term is nonnegative. This calculation incurs
no positive power of \(\lambda\).

We next combine the density entropy with the signal
Fisher energy. The consumption term in the signal equation
is $-\lambda Nc$, so the mixed derivative term in the
Fisher identity carries a factor $\lambda$. The corresponding
term in the density entropy has no such factor. We therefore
multiply the Fisher energy by $\lambda^{-1}$ before adding
the two identities.
We also use $v=c+1/4$ in the denominator to avoid requiring
a positive lower bound for $c$. For the density, we use
$h_0(N)=N\log N-N+1$, which is nonnegative.
Fix the following definitions throughout this proof:
\begin{equation}\label{eq:normalized-variables}
\begin{gathered}
\delta=\frac14,\qquad v=c+\delta,\qquad
h_0(z)=z\log z-z+1,\\
W=\frac{\nabla v}{\sqrt{\lambda v}},\qquad
\mathcal H=h_0(N)+\frac{|W|^2}{2}.
\end{gathered}
\end{equation}
In particular, the hypotheses of the proposition imply
\begin{equation}\label{eq:normalized-v-bounds}
\frac14\le v\le A+\frac14,\qquad \nabla v=\nabla c,
\qquad h_0(N)\ge0.
\end{equation}
Apply \eqref{eq:Fisher-identity} with $u$ replaced by $b$ and
$g=-\lambda N(v-\delta)$. After multiplication by
$\lambda^{-1}$, the mixed derivative term from the
Fisher identity is
\[
-\left(1-\frac{\delta}{v}\right)
\nabla N\cdot\nabla v.
\]
Adding the term $\nabla N\cdot\nabla v$ from the density
entropy leaves only
$\frac{\delta}{v}\nabla N\cdot\nabla v$.
We thus obtain
\begin{equation}\label{eq:normalized-identity}
\begin{split}
L_b\mathcal H
&+\frac{|\nabla N|^2}{N}
+\frac v\lambda|D^2\log v|^2
+\frac{N|\nabla v|^2}{2v}
+\frac{\delta N|\nabla v|^2}{2v^2}\\
&=-\nabla\cdot(N\log N\nabla v)
+\frac{\delta}{v}\nabla N\cdot\nabla v
-(W\otimes W):\nabla b .
\end{split}
\end{equation}
For clarity, the first equation of \eqref{eq:normalized-system}
and \(h_0\) from \eqref{eq:normalized-variables} give
\begin{equation}\label{eq:normalized-density-identity}
L_bh_0(N)=-\nabla\cdot(N\log N\nabla v)
+\nabla N\cdot\nabla v-\frac{|\nabla N|^2}{N}.
\end{equation}
Thus \eqref{eq:normalized-identity} is the sum of
\eqref{eq:normalized-density-identity} and the
\(\lambda^{-1}\)-weighted Fisher identity \eqref{eq:Fisher-identity}.
With \(\delta=1/4\) from \eqref{eq:normalized-variables},
the mixed term in \eqref{eq:normalized-identity} satisfies
\begin{equation}\label{eq:normalized-mixed-Young}
\frac{\delta}{v}|\nabla N||\nabla v|
\le\frac14\frac{|\nabla N|^2}{N}
+\frac{\delta}{4}\frac{N|\nabla v|^2}{v^2}.
\end{equation}
Substituting \eqref{eq:normalized-mixed-Young} into
\eqref{eq:normalized-identity} yields
\begin{equation}\label{eq:normalized-inequality}
\begin{split}
L_b\mathcal H
&+\frac34\frac{|\nabla N|^2}{N}
+\frac v\lambda|D^2\log v|^2
+\frac{N|\nabla v|^2}{2v}
+\frac{\delta N|\nabla v|^2}{4v^2}\\
&\le-\nabla\cdot(N\log N\nabla v)+|\nabla b|\,|W|^2.
\end{split}
\end{equation}
In particular, the strain is expressed entirely in terms of
\(W\); its coefficient is uniform in \(\lambda\).

\subsubsection{Fisher dissipation and the drift terms}

To estimate the strain term \(\iint|\nabla b||W|^2\), we need \(W\) in \(L^{10/3}\). The parabolic Sobolev inequality \eqref{eq:parabolic-Sobolev}
gives this bound from the time-slice \(L^2\) norm of \(W\) and the space-time \(L^2\) norm of \(\nabla W\). We therefore first show that the Fisher dissipation controls \(\nabla W\) up to cutoff terms. Write \(q=\nabla\log v\) and \(H_v=D^2\log v\).
For a spatial cutoff \(\eta\), define
\begin{equation}\label{eq:localized-Fisher-quantities}
J_4=\int\eta^2v|q|^4,\qquad
J_2=\int\eta^2v|H_v|^2,\qquad
B_\eta=\int v|q|^2|\nabla\eta|^2.
\end{equation}
Multiplying \eqref{eq:log-gradient-divergence} by \(\eta^2\)
and integrating gives, with \eqref{eq:localized-Fisher-quantities},
\begin{equation}\label{eq:localized-quartic-estimate}
J_4\le(2+\sqrt3)J_2^{1/2}J_4^{1/2}
+2B_\eta^{1/2}J_4^{1/2}.
\end{equation}
Dividing \eqref{eq:localized-quartic-estimate} by
\(J_4^{1/2}\), if it is nonzero, and squaring gives
\(J_4\le C(J_2+B_\eta)\). Differentiating \(W\) as defined in
\eqref{eq:normalized-variables} gives
\begin{equation}\label{eq:W-differentiation}
\nabla W=\sqrt{\frac v\lambda}
\left(H_v+\frac12q\otimes q\right),
\end{equation}
so \eqref{eq:localized-quartic-estimate} and
\eqref{eq:W-differentiation} imply
\begin{equation}\label{eq:W-gradient}
\int|\nabla(\eta W)|^2
\le\frac C\lambda\int\eta^2v|D^2\log v|^2
+C\int|W|^2|\nabla\eta|^2.
\end{equation}

Choose \(\eta\) supported in \(B_{7/8}\), equal to one on
\(B_{3/4}\), and choose a nondecreasing time cutoff \(\theta\)
that vanishes for \(s\le-7/8\) and equals one for \(s\ge-3/4\).
Let \(\psi=\theta\eta^2\), \(Z=\sqrt\theta\,\eta W\), and
\begin{equation}\label{eq:Ytau}
\begin{split}
Y_\tau={}&\sup_{-1<s\le\tau}\int\mathcal H(s)\psi(s)\\
&+\int_{-1}^{\tau}\int\psi\left(
\frac{|\nabla N|^2}{N}+\frac v\lambda|D^2\log v|^2
+\frac{N|\nabla v|^2}{v}\right),\qquad \tau<0.
\end{split}
\end{equation}
All cutoff supports lie in the region of \eqref{eq:outer-signal}.
By \eqref{eq:normalized-v-bounds}, \(\lambda\ge1\), and
the signal budget \eqref{eq:outer-signal},
\begin{equation}\label{eq:W-outer-budget}
\iint_{\supp\psi\,\cup\,\supp D\psi}|W|^2\le CA^2(1+U),
\end{equation}
where \(D\psi\) represents the cutoff derivatives in use.
Apply \eqref{eq:parabolic-Sobolev} to \(Z\), and use
\eqref{eq:W-gradient}, \eqref{eq:Ytau}, and
\eqref{eq:W-outer-budget}, to obtain
\begin{equation}\label{eq:Z-Sob}
\|Z\|_{L^{10/3}(B_1\times(-1,\tau))}^2
\le C\left(Y_\tau+CA^2(1+U)\right).
\end{equation}
The spatial cutoff permits zero extension. No time derivative
of \(\sqrt\theta\) is needed in this inequality.

For \(h_0\) in \eqref{eq:normalized-variables}, the strict
inequality \(p_0>5/4\) implies
\begin{equation}\label{eq:h-growth}
h_0(z)+h_0(z)^{5/4}+z(\log z)^2
\le C_{p_0}(1+z^{p_0}),\qquad z>0.
\end{equation}
Thus \eqref{eq:h-growth}, the density bound
\eqref{eq:normalized-hyp}, and \eqref{eq:W-outer-budget}
control the ordinary cutoff terms by \(C(A,p_0,K_0)\).
For the chemotactic entropy flux,
\begin{equation}\label{eq:normalized-chem-flux}
|N\log N\nabla v\cdot\nabla\psi|
\le\frac14\psi\frac{N|\nabla v|^2}{v}
+4\theta vN(\log N)^2|\nabla\eta|^2 .
\end{equation}
The first term is absorbed, and the second is controlled
by \eqref{eq:h-growth}.

There are two transport cutoff terms. By H\"older's inequality,
\eqref{eq:h-growth}, and \eqref{eq:normalized-hyp}, the density
part satisfies
\begin{equation}\label{eq:density-transport}
\iint h_0(N)|b||\nabla\psi|
\le CU\|h_0(N)\|_{L^{5/4}(Q_1)}
\le C_{p_0}U(1+K_0)^{4/5}.
\end{equation}
This is the reason for the restriction \(p_0>5/4\).
For the Fisher part, Young's and H\"older's inequalities give
\begin{equation}\label{eq:Fisher-transport}
\begin{split}
\iint\frac{|W|^2}{2}|b||\nabla\psi|
&\le C\iint|b|^2|Z|^2
+C\iint\theta|W|^2|\nabla\eta|^2\\
&\le CU^2\|Z\|_{10/3}^2+CA^2(1+U).
\end{split}
\end{equation}
The strain satisfies
\begin{equation}\label{eq:normalized-strain}
\iint\psi|\nabla b||W|^2\le S_b\|Z\|_{10/3}^2.
\end{equation}
Test \eqref{eq:normalized-inequality}, take the terminal
supremum, and apply \eqref{eq:Z-Sob}--\eqref{eq:normalized-strain}.
Under the preliminary restriction \(U^2+S_b\le1\), this yields
\begin{equation}\label{eq:Y-absorb}
Y_\tau\le C_0+C_1(U^2+S_b)Y_\tau.
\end{equation}
Here \(C_0,C_1\ge1\) depend only on \(A,p_0,K_0\) and the
fixed cutoffs. They are independent of \(\lambda\), \(\tau\),
pointwise upper bounds for \(N\), and positive lower bounds for
\(N\) or \(c\). In \eqref{eq:normalized-hyp}, choose
\begin{equation}\label{eq:normalized-smallness-choice}
\eta_0=\min\{1,(2C_1)^{-1}\}.
\end{equation}
Then \eqref{eq:normalized-smallness-choice} makes the last
coefficient in \eqref{eq:Y-absorb} at most \(1/2\), and hence
\begin{equation}\label{eq:normalized-Y-bound}
\sup_{-1<\tau<0}Y_\tau\le2C_0,
\end{equation}
uniformly in \(\lambda\). By the definitions
\eqref{eq:normalized-variables} and \eqref{eq:Ytau},
nonnegativity in \eqref{eq:normalized-v-bounds}, and
\(N\le h_0(N)+e^2\), this controls the time-slice mass and the
two density dissipations on \(B_{3/4}\times(-3/4,0)\).
For the mixed dissipation, the denominator is removed only by
the upper bound in \eqref{eq:normalized-v-bounds}:
\begin{equation}\label{eq:normalized-unweighted-coupling}
\iint_{Q_{1/2}}N|\nabla c|^2
\le\left(A+\frac14\right)
       \iint_{Q_{1/2}}\frac{N|\nabla v|^2}{v}
\le C(A,p_0,K_0).
\end{equation}
Choose a spatial cutoff \(\xi\) supported in \(B_{3/4}\),
equal to one on \(B_{1/2}\). Apply \eqref{eq:parabolic-Sobolev}
to \(f=\xi\sqrt N\) on \((-1/2,0)\):
\begin{equation}\label{eq:normalized-density-Sobolev}
\int_{-1/2}^0\int|f|^{10/3}
\le C\left(\sup_{-1/2<s<0}\int|f(s)|^2\right)^{2/3}
       \int_{-1/2}^0\int|\nabla f|^2
\le C(A,p_0,K_0).
\end{equation}
Here the cutoff term in \(\nabla f\) is controlled by the
time-slice mass just obtained. Equations
\eqref{eq:normalized-unweighted-coupling} and
\eqref{eq:normalized-density-Sobolev}, together with
\(|\nabla N|^2/N=4|\nabla\sqrt N|^2\), prove
\eqref{eq:normalized-gain}.

To record the signal estimates, integrate
\eqref{eq:localized-quartic-estimate} with the time weight
\(\theta/\lambda\), and use \eqref{eq:W-outer-budget} and
\eqref{eq:normalized-Y-bound}. This yields
\begin{equation}\label{eq:normalized-quartic-bound}
\frac1\lambda\iint_{Q_{3/4}}
             \frac{|\nabla c|^4}{v^3}\le C(A,p_0,K_0).
\end{equation}
Equation \eqref{eq:signal-Hessian-comparison}, multiplied by
\(\lambda^{-1}\), then bounds the Hessian integral by
\eqref{eq:normalized-quartic-bound} and the Fisher dissipation
in \eqref{eq:Ytau}. The time-slice gradient norm follows from
\(|W|^2=|\nabla c|^2/(\lambda v)\) in
\eqref{eq:normalized-variables}. Finally, use
\eqref{eq:normalized-v-bounds} to remove the bounded weights
\(v\), and \eqref{eq:outer-signal} for the unweighted signal
budget. This proves \eqref{eq:normalized-signal-gain}.
All calculations are first performed below \(\tau<0\);
monotone convergence removes the time truncation.
\end{proof}

\begin{Rem}\label{rem:normalized-uniformity}
The natural mixed dissipation in \eqref{eq:Ytau} is
\(N|\nabla c|^2/v\); the unweighted term in
\eqref{eq:normalized-gain} is its consequence
\eqref{eq:normalized-unweighted-coupling}. The factors
\(\lambda^{-1}\) in \eqref{eq:normalized-signal-gain} are
essential to the stated uniform estimate. This proposition does
not assert a \(\lambda\)-independent unweighted bound for the
time-slice signal gradient or the signal Hessian.
No upper bound for \(\lambda N\) was used.
When this proposition is applied to a rescaled Stokes solution,
the smallness of \(U^2+S_b\) follows from
\Cref{lem:fluid}. It does not follow merely from a factor \(r\)
in the rescaled force: the normalized Stokes forcing would
also contain the potentially large density-amplitude factor.
\end{Rem}

\section{Exclusion of density concentration}
\label{sec:concentration}

We now use the preceding section to prove decay of the density
scale quantity at every candidate singular point. The argument
depends on the terminal signal value. For a positive value, the
constant signal tangent forces normalized density limits to vanish;
the uniform entropy gain and an exact recurrence recover decay of
the original density quantity. For a zero value, upper
semicontinuity gives a backward region with small signal, where a
cosine weight makes the density-gradient quadratic form coercive.
The new role of this weight here is a local decay estimate with
transport, rather than a global small-initial-signal assumption.
Neither branch assumes a bound for the density at the terminal time.

\subsection{Positive terminal signal: normalization and recurrence}\label{sec:positive}

For the remainder of the proof, suppose for contradiction that
\(T=T_{\max}<\infty\). Assume first that \(n_0\) is not identically
zero. The strong maximum principle gives \(n>0\) for \(t>0\).
All global budgets in \Cref{lem:energy,lem:fluid} are available.
Use \(p,p',\gamma\) from \eqref{eq:fixed-p}, and fix \(x_0\).
Abbreviate
\begin{equation}\label{eq:terminal-density-functional}
\Acal(r)=r^{-\gamma}\iint_{Q_r(x_0,T)}n^p .
\end{equation}
The functional \eqref{eq:terminal-density-functional} is finite
at each fixed radius with \(r^2<T\), by \eqref{eq:n-integrable}.
No uniform bound in \(r\) is assumed.

\begin{Prop}\label{prop:positive}
If \(c_*(T,x_0)>0\), then
\begin{equation}\label{eq:positive-decay}
\lim_{r\downarrow0}\Acal(r)=0 .
\end{equation}
\end{Prop}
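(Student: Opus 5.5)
Set \(c_\star=c_*(T,x_0)>0\) and argue by contradiction along a sequence. Suppose \(\limsup_{r\downarrow0}\Acal(r)=2\alpha>0\). For each small \(r\), rescale as in \eqref{eq:scaling} about \((x_0,T)\), obtaining \((n_r,c_r,u_r)\) on \(Q_1\). Set \(\lambda_r=\max\{1,\Acal(r)^{1/p}\}\) and \(N_r=n_r/\lambda_r\). Since \(\iint_{Q_1}n_r^p=\Acal(r)\), we have \(\iint_{Q_1}N_r^p\le 1\). The pair \((N_r,c_r)\) solves \eqref{eq:normalized-system} with \(b=u_r\) and \(\lambda=\lambda_r\). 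By \eqref{eq:uniform-fluid-tail} and scale invariance, \(\|u_r\|_{L^5(Q_1)}^2+\|\nabla u_r\|_{L^{5/2}(Q_1)}\to0\). Hence \Cref{prop:uniform-entropy}, with \(p_0=p>5/4\) and \(K_0=1\), applies for all small \(r\). It gives \(\iint_{Q_{1/2}}N_r^{5/3}\le C\), the bound \(\iint_{Q_{1/2}}\lambda_rN_rc_r^2\le C\) from \eqref{eq:normalized-signal-gain}, and bounds on \(\nabla\sqrt{N_r}\) and \(N_r|\nabla c_r|^2\), all uniform in \(r\).

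Next I show that \(N_r\to0\) in \(L^p(Q_{1/2})\). By \Cref{prop:terminal}, \(c_r\to c_\star\) strongly in every \(L^a(Q_1)\). Write
\[
\iint_{Q_{1/2}}N_rc_\star^2\le 2\iint N_rc_r^2+2\iint N_r|c_r-c_\star|^2 .
\]
The first term is at most \(C/\lambda_r\). The second is bounded by \(\|N_r\|_{5/3}\|c_r-c_\star\|_{5}^2\to0\). Therefore \(\iint_{Q_{1/2}}N_r\le C\lambda_r^{-1}+o(1)\).

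If \(\lambda_r\to\infty\) along the sequence, this gives \(N_r\to0\) in \(L^1\). Since \(N_r\ge0\), this is strong \(L^1\) convergence. Interpolating with the uniform \(L^{5/3}\) bound, which is possible because \(p<5/3\), upgrades it to \(N_r\to0\) in \(L^p(Q_{1/2})\). If instead \(\lambda_r\) stays bounded along a subsequence, I would pass to the weak limit in the rescaled signal equation. Using the Fisher bounds, \(c_r\) converges to the constant \(c_\star\), so the limit equation forces \(\lambda N c_\star=0\) in distributions. Thus every weak limit \(N\) vanishes, and the same interpolation gives strong \(L^p\) convergence. Both cases could be merged by working only with the consumption bound above.

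Finally I convert this into decay of \(\Acal\). The scaling gives the exact recurrence
\[
\Acal(r/2)=2^{\gamma}\lambda_r^{p}\,r^{-\gamma}\!\iint_{Q_{1/2}}N_r^p\cdot(\text{fixed factor}),
\]
that is, \(\Acal(r/2)=2^{\gamma}\lambda_r^p\iint_{Q_{1/2}}N_r^p\). When \(\lambda_r>1\) we have \(\lambda_r^p=\Acal(r)\), so \(\Acal(r/2)\le o(1)\,\Acal(r)\). When \(\lambda_r=1\) we get \(\Acal(r/2)\le o(1)\) directly. The argument applies to any sequence \(r\to0\), so \(\Acal(r/2)\le\epsilon(r)\max\{1,\Acal(r)\}\) with \(\epsilon(r)\to0\). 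Iterating along dyadic radii \(r_02^{-k}\) from a fixed finite starting value, and handling intermediate radii by comparability factors \(2^\gamma\), gives first that \(\Acal\) becomes bounded, then \(\le\epsilon\) eventually, and hence \(\Acal(r)\to0\).

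The main obstacle is making \(\epsilon(r)\to0\) uniform rather than merely subsequential. I would phrase it as: for every \(\epsilon\) there is \(r_\epsilon\) such that \(\Acal(r/2)\le\epsilon\max\{1,\Acal(r)\}\) for all \(r<r_\epsilon\). Otherwise there is a sequence violating it, which contradicts the compactness argument just given.
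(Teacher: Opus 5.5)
Your proposal is correct and follows the paper's proof essentially step for step: the same normalization $\lambda_r=\max\{1,\Acal(r)^{1/p}\}$, the $\lambda$-uniform $L^{5/3}$ gain from \Cref{prop:uniform-entropy}, vanishing of $N_r$ forced by the constant tangent $c_\star>0$, interpolation up to $L^p(Q_{1/2})$, the exact recurrence $\Acal(r/2)=2^\gamma\max\{1,\Acal(r)\}\iint_{Q_{1/2}}N_r^p$ with dyadic iteration, and a subsequence argument for uniformity in $r$. The one deviation is that for $\lambda_r\to\infty$ you use the consumption term $\iint_{Q_{1/2}}\lambda_rN_rc_r^2\le C$ of \eqref{eq:normalized-signal-gain}, whereas the paper needs no case split, because for nonnegative $\varphi\in C_c^\infty(Q_1)$ one has $0\le\iint N_rc_r\varphi\le\iint\lambda_rN_rc_r\varphi=\iint c_r(\varphi_s+\Delta\varphi+b_r\cdot\nabla\varphi)\to0$ for every $\lambda_r\ge1$; it is this testing argument that merges your two cases, not the consumption bound, which is only $O(1)$ when $\lambda_r$ stays bounded.
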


\begin{proof}
Use the parabolic scaling \eqref{eq:scaling}, and define
\begin{equation}\label{eq:lambda-N}
\lambda_r=\max\{1,\Acal(r)^{1/p}\},\qquad
N_r=\frac{n_r}{\lambda_r}.
\end{equation}
Write \(b_r=u_r\). The scalar equations become
\begin{equation}\label{eq:rescaled-normalized-system}
\begin{cases}
(N_r)_s+b_r\cdot\nabla N_r
 =\Delta N_r-\nabla\cdot(N_r\nabla c_r),\\
(c_r)_s+b_r\cdot\nabla c_r
 =\Delta c_r-\lambda_rN_rc_r.
\end{cases}
\end{equation}
Also
\begin{equation}\label{eq:Nr-bound}
\iint_{Q_1}N_r^p
=\frac{\Acal(r)}{\max\{1,\Acal(r)\}}\le1.
\end{equation}
The exact scaling relations are
\begin{equation}\label{eq:critical-drift-scaling}
\|b_r\|_{L^5(Q_1)}=\|u\|_{L^5(Q_r(x_0,T))},\qquad
\|\nabla_yb_r\|_{L^{5/2}(Q_1)}
=\|\nabla_xu\|_{L^{5/2}(Q_r(x_0,T))}.
\end{equation}
By \eqref{eq:critical-drift-scaling} and
\eqref{eq:uniform-fluid-tail}, the smallness condition
\eqref{eq:normalized-hyp} of \Cref{prop:uniform-entropy} holds for all sufficiently small
\(r\). Apply \eqref{eq:normalized-gain} with \(p_0=p\), \(K_0=1\),
and \(\lambda=\lambda_r\) to the system
\eqref{eq:rescaled-normalized-system}, using
\eqref{eq:Nr-bound}:
\begin{equation}\label{eq:Nr-gain}
\sup_{0<r<r_0}\iint_{Q_{1/2}}N_r^{5/3}\le C(A,p).
\end{equation}
Uniformity in \(\lambda_r\) is essential here.

Let \(r_k\downarrow0\) be arbitrary. By \eqref{eq:Nr-bound},
a subsequence satisfies
\begin{equation}\label{eq:normalized-weak-limit}
N_{r_k}\rightharpoonup N\quad\hbox{in }L^p(Q_1),\qquad N\ge0 .
\end{equation}
Put \(c_\star=c_*(T,x_0)>0\). By \Cref{prop:terminal},
\(c_{r_k}\to c_\star\) strongly in \(L^{p'}(Q_1)\).
For \(\varphi\in C_c^\infty(Q_1)\), the signal equation gives
\begin{equation}\label{eq:signal-test}
\iint\lambda_{r_k}N_{r_k}c_{r_k}\varphi
=\iint c_{r_k}
 \big(\varphi_s+\Delta\varphi+b_{r_k}\cdot\nabla\varphi\big)
\longrightarrow0.
\end{equation}
The first two terms converge to
\(c_\star\iint(\varphi_s+\Delta\varphi)=0\).
The drift term is at most
\(A\|b_{r_k}\|_5\|\nabla\varphi\|_{5/4}\), which tends to zero.
For nonnegative \(\varphi\), positivity and \(\lambda_{r_k}\ge1\)
imply
\[
0\le\iint N_{r_k}c_{r_k}\varphi
\le\iint\lambda_{r_k}N_{r_k}c_{r_k}\varphi\longrightarrow0.
\]
On the other hand,
\begin{equation}\label{eq:signal-density-limit}
\left|\iint N_{r_k}(c_{r_k}-c_\star)\varphi\right|
\le\|N_{r_k}\|_p\|c_{r_k}-c_\star\|_{p'}\|\varphi\|_\infty
\longrightarrow0 .
\end{equation}
Combining \eqref{eq:signal-test}, \eqref{eq:signal-density-limit},
and the weak convergence \eqref{eq:normalized-weak-limit} gives
\(c_\star N=0\).
Since \(c_\star>0\), \(N=0\).

The weak convergence \eqref{eq:normalized-weak-limit} holds on
the entire cylinder \(Q_1\).
Testing it with the constant function \(1\in L^{p'}(Q_1)\)
and using nonnegativity yields
\begin{equation}\label{eq:Nr-L1}
\|N_{r_k}\|_{L^1(Q_1)}=\iint_{Q_1}N_{r_k}\longrightarrow0.
\end{equation}
Thus no normalized mass is lost at the spatial or temporal
boundary of the cylinder. Interpolating \eqref{eq:Nr-L1}
with \eqref{eq:Nr-gain}, we get
\begin{equation}\label{eq:Nr-Lp}
\|N_{r_k}\|_{L^{19/12}(Q_{1/2})}
\le\|N_{r_k}\|_{L^1(Q_{1/2})}^{3/38}
   \|N_{r_k}\|_{L^{5/3}(Q_{1/2})}^{35/38}
\longrightarrow0 .
\end{equation}
Every original sequence has a subsequence with this property.
A contradiction argument therefore shows
\(\iint_{Q_{1/2}}N_r^p\to0\) for all \(r\downarrow0\).

The convergence of \(N_r\) does not directly imply decay of the original density integral, because the factors \(\lambda_r\) may be unbounded. Instead, we compare the density integrals at the two radii \(r\) and \(r/2\). The change of variables gives
\begin{equation}\label{eq:exact-recurrence}
\iint_{Q_{1/2}}N_r^p
=2^{-\gamma}\frac{\Acal(r/2)}{\max\{1,\Acal(r)\}}.
\end{equation}
Hence
\begin{equation}\label{eq:recurrence-coefficient}
\beta(r):=\frac{\Acal(r/2)}{\max\{1,\Acal(r)\}}\longrightarrow0.
\end{equation}
By \eqref{eq:recurrence-coefficient}, choose \(r_0>0\) so that
\(\beta(r)\le1/2\) for \(r\le r_0\).
For \(a_j=\Acal(2^{-j}r_0)\),
\begin{equation}\label{eq:dyadic-recurrence}
a_{j+1}\le\frac12\max\{1,a_j\}.
\end{equation}
The finite value \(a_0\) decreases by at least a factor \(1/2\)
as long as it is above one, by \eqref{eq:dyadic-recurrence}.
After finitely many steps,
\(a_j\le1\), and this interval is invariant under the recurrence.
Thereafter
\[
a_{j+1}=\beta(2^{-j}r_0)\longrightarrow0.
\]
For \(2^{-j-1}r_0\le r\le2^{-j}r_0\), cylinder inclusion gives
\(\Acal(r)\le2^\gamma a_j\). This proves
\eqref{eq:positive-decay} for arbitrary radii.
\end{proof}

\subsection{A small-signal estimate with critical transport}\label{sec:small}

The zero terminal signal branch requires a different estimate.
Only finiteness of the critical drift norm is needed in this section.
Weighted density energies of the form \(\int N^p\phi(c)\)
are a standard tool for small-signal estimates; see
Tao \cite{Tao2011} and the account in
Lankeit--Winkler \cite[Section~2]{LankeitWinkler2023}.
Here we localize that method, retain a divergence-free drift in
the critical space \(L^\infty_sL^3_y\), and apply the resulting bound in the
zero terminal signal branch.
Keep \(p=19/12\), and set
\begin{equation}\label{eq:small-signal-exponent}
s=\frac{5p}{3}=\frac{95}{36}>\frac52 .
\end{equation}

\begin{Prop}\label{prop:small}
Suppose \(N>0\), \(0\le c\le1\), and \(b\) are classical on
the open-top cylinder \(Q_1\), and satisfy
\begin{equation}\label{eq:small-system}
\begin{cases}
N_t+b\cdot\nabla N=\Delta N-\nabla\cdot(N\nabla c),\\
c_t+b\cdot\nabla c=\Delta c-Nc,\qquad\nabla\cdot b=0.
\end{cases}
\end{equation}
Assume
\begin{equation}\label{eq:small-signal-hyp}
K_0=\iint_{Q_1}N^p<\infty,\qquad
L=\|b\|_{L^\infty(-1,0;L^3(B_1))}<\infty.
\end{equation}
Then
\begin{align}
&\sup_{-9/16<t<0}\int_{B_{3/4}}N^p(t)
+\iint_{Q_{3/4}}\!
 \big(|\nabla N^{p/2}|^2+N^p|\nabla c|^2\big)
\le C(1+L^2)K_0,\label{eq:small-energy}\\
&\iint_{Q_{2/3}}N^s
\le C[(1+L^2)K_0]^{5/3}.\label{eq:small-gain}
\end{align}
In particular, for \(0<\rho\le2/3\),
\begin{equation}\label{eq:small-Morrey}
\rho^{2p-5}\iint_{Q_\rho}N^p
\le C(1+L^2)K_0\,\rho^{1/6}.
\end{equation}
No smallness is required of \(L\).
\end{Prop}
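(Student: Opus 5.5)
The plan is the weighted-energy method of Tao, localized and run with a cosine weight. Set
\[
\phi(c)=\cos^{-2p/(p-1)\cdot\theta}(\alpha c)
\]
or, more concretely, \(\phi(c)=(\cos(\alpha c))^{-\mu}\) with \(\alpha\in(0,\pi/2)\) and \(\mu>0\) fixed depending only on \(p=19/12\). Since \(0\le c\le1\), this gives \(1\le\phi\le C\). We test the density equation against \(pN^{p-1}\phi(c)\zeta^2\), where \(\zeta\) is a parabolic cutoff supported in \(Q_1\), equal to one on \(Q_{3/4}\), and vanishing near the lower time face. All integrations are done below a strict upper time \(\tau<0\), and \(\tau\uparrow0\) at the end, as in \Cref{thm:epsilon}.

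\textbf{Step 1: the pointwise identity.} Computing \(L_b(N^p\phi(c))\) produces the quadratic form
\[
-\Bigl[p(p-1)N^{p-2}\phi|\nabla N|^2+(2p\phi'-p(p-1)\phi)N^{p-1}\nabla N\cdot\nabla c+(\phi''-p\phi')N^p|\nabla c|^2\Bigr],
\]
together with the consumption term \(-\phi'N^{p+1}c\le0\), which is dropped since \(\phi'\ge0\), and a divergence. With \(\phi=\cos^{-\mu}(\alpha c)\) one has \(\phi'=\mu\alpha\tan(\alpha c)\phi\) and \(\phi''\ge\mu\alpha^2\phi\). Choosing \(\mu\) suitably and \(\alpha\) small makes the form coercive:
\[
\ge c_p\phi\bigl(N^{p-2}|\nabla N|^2+N^p|\nabla c|^2\bigr).
\]
This is the standard discriminant condition; the cosine weight is exactly what makes it hold on \(c\in[0,1]\) uniformly, without any smallness of \(c\) beyond \(c\le1\). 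Incompressibility turns the transport term into \(N^p\phi\,b\cdot\nabla\zeta^2\).

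\textbf{Step 2: cutoff errors.} The errors are of the form
\[
\iint N^p|\zeta_t+\Delta\zeta^2|,\qquad \iint N^{p}|\nabla c||\nabla\zeta|\zeta,\qquad \iint N^{p-1}|\nabla N||\nabla c|\ldots
\]
The last type is absorbed via Young's inequality into the dissipation, leaving \(C\iint N^p|\nabla\zeta|^2\le CK_0\).

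The drift error is the main obstacle, since \(L\) is only finite, not small. We bound
\[
\iint N^p|b||\nabla\zeta|\zeta\le\int L\,\|\zeta N^{p/2}\|_{L^3}\,\|N^{p/2}\nabla\zeta\|_{L^2}\,dt,
\]
then interpolate \(\|\zeta N^{p/2}\|_3\le\|\zeta N^{p/2}\|_2^{1/2}\|\zeta N^{p/2}\|_6^{1/2}\). Young's inequality absorbs the \(\dot H^1\) part, leaving
\[
CL^{4/3}\int\|\zeta N^{p/2}\|_2^{2/3}\|N^{p/2}\nabla\zeta\|_2^{4/3}.
\]
Using \(\|\zeta N^{p/2}\|_2\le\|N^{p/2}\|_{L^2(B_1)}\) and integrating in time, this is bounded by \(C(1+L^2)K_0\) after one further Young inequality. (The crude inequality \(L^{4/3}\le1+L^2\) suffices.) To avoid circularity, it is cleaner to write
\[
\iint N^p|b|\,|\nabla\zeta|\zeta\le\varepsilon\iint|\nabla(\zeta N^{p/2})|^2+C_\varepsilon L^2\iint N^p|\nabla\zeta|^2,
\]
via the estimate \(\|\zeta N^{p/2}|b|^{1/2}\ldots\|\). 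In either form, with \(L\) finite, we obtain \eqref{eq:small-energy} with the factor \((1+L^2)K_0\). Using \(\phi\ge1\) removes the weight.

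\textbf{Step 3: the gain and the Morrey bound.} Apply the parabolic Sobolev inequality \eqref{eq:parabolic-Sobolev} to \(f=\xi N^{p/2}\), with \(\xi\) a spatial cutoff equal to one on \(B_{2/3}\). Since \(f^{10/3}=N^{5p/3}=N^s\),
\[
\iint_{Q_{2/3}}N^s\le\Bigl(\sup_t\int f^2\Bigr)^{2/3}\iint|\nabla f|^2\le C[(1+L^2)K_0]^{5/3},
\]
which is \eqref{eq:small-gain}. For \eqref{eq:small-Morrey}, apply Hölder on \(Q_\rho\):
\[
\iint_{Q_\rho}N^p\le|Q_\rho|^{1-p/s}\Bigl(\iint N^s\Bigr)^{p/s}=C\rho^{5\cdot2/5}\bigl[(1+L^2)K_0\bigr],
\]
since \(p/s=3/5\). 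Hence \(\rho^{2p-5}\iint_{Q_\rho}N^p\le C(1+L^2)K_0\,\rho^{2p-3}\), and \(2p-3=1/6\) for \(p=19/12\), exactly as claimed.
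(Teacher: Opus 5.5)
Your architecture is the paper's: a localized energy $\int\psi N^p\phi(c)$ with a cosine weight, completion of the square, absorption of the merely finite critical drift through Sobolev applied to $\zeta N^{p/2}$, and then parabolic Sobolev and H\"older. Your Step~3 is exactly the paper's. The gap is in Step~1, which is the only place where $0\le c\le 1$ enters. You assert coercivity for small $\alpha$ without checking the discriminant, and for small $\alpha$ coercivity fails. With $w=\phi'/\phi$, your quadratic form is positive definite if and only if
\[
w'-\frac{w^2}{p-1}-\frac{p(p-1)}{4}>0\quad\text{on }[0,1].
\]
For $\phi=(\cos\alpha c)^{-\mu}$ one has $w=\mu\alpha\tan(\alpha c)$, and the left side equals
\[
\mu\alpha^2+\mu\alpha^2\Bigl(1-\frac{\mu}{p-1}\Bigr)\tan^2(\alpha c)-\frac{p(p-1)}{4}.
\]
At $c=1$, its supremum over $\mu>0$ is $\frac{p-1}{4}\bigl(\frac{4\alpha^2}{\sin^2 2\alpha}-p\bigr)$. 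This is negative whenever $2\alpha/\sin 2\alpha<\sqrt p$, in particular for every small $\alpha$, whatever $\mu$ you choose. The reason: with $\mu\alpha^2=K$ fixed and $\alpha\to0$, the weight degenerates to $e^{Kc^2/2}$, and a Gaussian weight only handles signal ranges shorter than $1/\sqrt p<1$.

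So $\alpha$ must be bounded below, not small. Taking $\mu=p-1$ gives $w'-w^2/(p-1)=(p-1)\alpha^2$, so any $\alpha\in(\sqrt p/2,\pi/2)$ works. The paper takes $\alpha=1$, which gives the constant $d=(p-1)(1-p/4)=203/576$ and the exact square $|\nabla N-b_0(c)N\nabla c|^2$ with $b_0=\tfrac12-\tan c$.

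There is also a smaller slip in Step~2. The bound $\|b\|_3\|\zeta N^{p/2}\|_3\|N^{p/2}\nabla\zeta\|_2$ is not a H\"older inequality, since the reciprocal exponents sum to $7/6$. Use the triple $L^3\times L^6\times L^2$ instead. Sobolev for the zero-trace function $\zeta N^{p/2}$ then gives your second form, $\varepsilon\iint|\nabla(\zeta N^{p/2})|^2+C_\varepsilon L^2\iint N^p|\nabla\zeta|^2$, directly. This is what the paper does, and no interpolation is needed. With the parameter choice corrected and this Hölder step fixed, the rest of your argument goes through as you describe.
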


\begin{proof}
On the interval \(0\le c\le1\), define
\begin{equation}\label{eq:cos-weight}
\varphi(c)=(\cos c)^{-(p-1)},\quad
w(c)=\frac{\varphi'(c)}{\varphi(c)}=(p-1)\tan c,\quad
b_0(c)=\frac12-\tan c .
\end{equation}
All these functions and the derivatives in use are bounded
on this interval, and \(\varphi\ge1\), \(\varphi'\ge0\).
Moreover,
\begin{equation}\label{eq:cos-coefficient}
w'-\frac{w^2}{p-1}-\frac{p(p-1)}4
=d,\qquad
d=(p-1)\left(1-\frac p4\right)=\frac{203}{576}>0 .
\end{equation}

To derive the weighted identity, multiply the density equation
in \eqref{eq:small-system}
by \(pN^{p-1}\varphi(c)\psi\) and the signal equation by
\(N^p\varphi'(c)\psi\), and add. The time derivatives combine
to \(\partial_t(N^p\varphi(c))\psi\). For clarity, the density
diffusion contributes
\[
-p(p-1)\psi\varphi N^{p-2}|\nabla N|^2
-p\psi\varphi' N^{p-1}\nabla N\cdot\nabla c
-p\varphi N^{p-1}\nabla N\cdot\nabla\psi,
\]
whereas the signal diffusion contributes
\[
-p\psi\varphi'N^{p-1}\nabla N\cdot\nabla c
-\psi\varphi''N^p|\nabla c|^2
-\varphi'N^p\nabla c\cdot\nabla\psi.
\]
The last terms in these two displays sum to
\(-\nabla(N^p\varphi)\cdot\nabla\psi\), which produces
\(N^p\varphi\Delta\psi\). Chemotaxis contributes
\[
p(p-1)\psi\varphi N^{p-1}\nabla N\cdot\nabla c
+p\psi\varphi'N^p|\nabla c|^2
+p\varphi N^p\nabla c\cdot\nabla\psi.
\]
Finally, consumption gives \(-\psi N^{p+1}c\varphi'\), and the
two transport terms combine into
\(\psi b\cdot\nabla(N^p\varphi)\). After integration, the latter
is transferred to \(N^p\varphi b\cdot\nabla\psi\) using
\(\diver b=0\). Completing the resulting quadratic form gives
\begin{equation}\label{eq:cos-identity}
\begin{aligned}
\frac{\dd}{\dd t}\int\psi N^p\varphi(c)
&+p(p-1)\int\psi\varphi(c)N^{p-2}
             |\nabla N-b_0(c)N\nabla c|^2\\
&+d\int\psi\varphi(c)N^p|\nabla c|^2
+\int\psi N^{p+1}c\varphi'(c)\\
&=\int N^p\varphi(c)(\psi_t+\Delta\psi+b\cdot\nabla\psi)
+p\int N^p\varphi(c)\nabla c\cdot\nabla\psi .
\end{aligned}
\end{equation}
For verification, the quadratic terms before completing the
square are
\begin{equation}\label{eq:cos-quadratic-form}
\begin{split}
&-p(p-1)\psi\varphi N^{p-2}|\nabla N|^2\\
&\quad+p\psi N^{p-1}\big((p-1)\varphi-2\varphi'\big)
                   \nabla N\cdot\nabla c
+\psi N^p(p\varphi'-\varphi'')|\nabla c|^2 .
\end{split}
\end{equation}
Substituting \eqref{eq:cos-weight}--\eqref{eq:cos-coefficient}
into \eqref{eq:cos-quadratic-form} gives exactly the two square
dissipations in \eqref{eq:cos-identity}.
Incompressibility puts all transport into
\(\int N^p\varphi\,b\cdot\nabla\psi\); no strain term appears.

Let \(f=N^{p/2}\). Direct comparison of the gradients gives
\begin{equation}\label{eq:cos-gradient-comparison}
|\nabla f|^2
\le\frac{p^2}{2}N^{p-2}|\nabla N-b_0N\nabla c|^2
+\frac{p^2}{2}\|b_0\|_\infty^2N^p|\nabla c|^2.
\end{equation}
Together with the positive coefficient \(d\) in
\eqref{eq:cos-coefficient} and bounded \(b_0\) from
\eqref{eq:cos-weight}, \eqref{eq:cos-gradient-comparison}
implies that, for some fixed \(\kappa_0>0\),
\begin{equation}\label{eq:cos-coercivity}
\begin{split}
&p(p-1)\varphi N^{p-2}|\nabla N-b_0N\nabla c|^2
+\frac d2\varphi N^p|\nabla c|^2\\
&\hspace{25mm}\ge
\kappa_0\big(|\nabla f|^2+N^p|\nabla c|^2\big).
\end{split}
\end{equation}
Choose a spatial cutoff \(\zeta\) in \(B_1\), equal to one
on \(B_{3/4}\), and a nondecreasing time cutoff \(\theta\),
zero near \(-1\) and equal to one for \(t\ge-9/16\).
Take \(\psi=\theta\zeta^2\).
The chemotactic cutoff term satisfies
\begin{equation}\label{eq:cos-cutoff}
pN^p\varphi|\nabla c||\nabla\psi|
\le\frac d2\psi\varphi N^p|\nabla c|^2
+C\theta N^p|\nabla\zeta|^2 .
\end{equation}

For transport, apply spatial H\"older and the zero-trace Sobolev
inequality at each time:
\begin{equation}\label{eq:cos-transport}
\begin{aligned}
\left|\int N^p\varphi b\cdot\nabla\psi\right|
&\le C\theta\|b(t)\|_3\|\zeta f\|_6\|f\nabla\zeta\|_2\\
&\le C\theta L\|\nabla(\zeta f)\|_2\|f\nabla\zeta\|_2\\
&\le\frac{\kappa_0}{2}\theta\int\zeta^2|\nabla f|^2
+C(1+L^2)\theta\int f^2|\nabla\zeta|^2.
\end{aligned}
\end{equation}
In the last step, expand \(\nabla(\zeta f)\) and then use
Young's inequality. The first term on the right of \eqref{eq:cos-transport} is absorbed into the gradient dissipation. The remaining term is bounded by
\[
C(1+L^2)\iint N^p |\nabla\zeta|^2 \le C(1+L^2)K_0.
\]
Thus $L$ need not be small: its size affects the constant in the estimate, but not the absorption of the gradient term.

Insert \eqref{eq:cos-coercivity}--\eqref{eq:cos-transport}
into \eqref{eq:cos-identity} and integrate up to \(\tau<0\).
The ordinary time and Laplacian cutoff terms are at most
\(CK_0\) by \eqref{eq:small-signal-hyp}. The consumption term is nonnegative because
\(c\varphi'(c)\ge0\).
After taking the terminal supremum and passing
\(\tau\uparrow0\), this proves \eqref{eq:small-energy}.

Choose another spatial cutoff \(\xi\), supported in \(B_{3/4}\)
and equal to one on \(B_{2/3}\). On the interval \((-4/9,0)\),
the parabolic Sobolev inequality \eqref{eq:parabolic-Sobolev}
and \eqref{eq:small-energy} give
\[
\iint_{Q_{2/3}}N^{5p/3}
\le C\left(\sup_t\int|\xi f(t)|^2\right)^{2/3}
       \iint|\nabla(\xi f)|^2
\le C[(1+L^2)K_0]^{5/3}.
\]
The extra spatial-cutoff term is controlled by
\eqref{eq:small-energy}. This proves \eqref{eq:small-gain}.
Finally, H\"older's inequality yields
\[
\begin{split}
\rho^{2p-5}\iint_{Q_\rho}N^p
&\le C\rho^{2p-5}|Q_\rho|^{2/5}
 \left(\iint_{Q_{2/3}}N^{5p/3}\right)^{3/5}\\
&\le C(1+L^2)K_0\,\rho^{2p-3}.
\end{split}
\]
Since \(p=19/12\) by \eqref{eq:fixed-p},
\(2p-3=1/6\); with the exponent \eqref{eq:small-signal-exponent},
this proves \eqref{eq:small-Morrey}.
\end{proof}

\subsection{Zero terminal signal: quantitative density decay}

The estimate just proved is local: smallness of the signal is
needed only on one backward cylinder. Upper semicontinuity from
Section~\ref{sec:terminal} supplies precisely such a cylinder at a
zero terminal value. We keep that radius fixed while shrinking the
inner radius, so the outer density integral need only be finite.

\begin{Cor}\label{cor:zero}
If \(c_*(T,x_0)=0\), then
\[
\lim_{r\downarrow0}\Acal_p(r;x_0,T)=0.
\]
\end{Cor}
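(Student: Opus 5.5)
The plan is to use \Cref{prop:terminal} (upper semicontinuity of \(c_*\)) to find a backward cylinder on which the signal is at most one, rescale it to \(Q_1\), and apply \Cref{prop:small}. The Morrey estimate \eqref{eq:small-Morrey} then transfers back to the original variables.

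\emph{Step 1 (small-signal cylinder).} Since \(c_*(T,x_0)=0\) and \(c_*\) is upper semicontinuous relative to \(\R^3\times(0,T]\), there is \(d>0\) with \(d^2<T\) such that
\[
c(t,y)\le1 \quad\text{for } |y-x_0|<d,\ T-d^2<t<T.
\]
Any threshold at most one suffices, since \Cref{prop:small} only requires \(0\le c\le1\).

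\emph{Step 2 (rescale at the fixed radius \(d\)).} Use the scaling \eqref{eq:scaling} with \(R=d\) and top point \((x_0,T)\). Set \(N=n_d\), \(c_d\), and \(b=u_d\). These are classical on the open-top cylinder \(Q_1\) and satisfy \eqref{eq:small-system} with \(N>0\) and \(0\le c_d\le1\). The scaling has unit consumption coefficient, so no normalization is needed.

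We then check the hypotheses \eqref{eq:small-signal-hyp}.
\begin{itemize}
\item \(K_0=\iint_{Q_1}N^p=d^{2p-5}\iint_{Q_d(x_0,T)}n^p\), which is finite by \eqref{eq:n-integrable}, since \(p=19/12<5/3\) and \(Q_d\) has finite measure.
\item \(L=\|u_d\|_{L^\infty(-1,0;L^3(B_1))}\le\|u\|_{L^\infty(0,T;L^3)}<\infty\) by \eqref{eq:u-integrable}, because the \(L^\infty_tL^3_x\) norm is scale invariant.
\end{itemize}
No smallness of \(L\) or \(K_0\) is required, which is why the radius \(d\) can stay fixed.

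\emph{Step 3 (transfer back).} For \(0<\rho\le2/3\), scaling gives
\[
\Acal_p(\rho d;x_0,T)=\rho^{2p-5}\iint_{Q_\rho}N^p .
\]
By \eqref{eq:small-Morrey} this is at most \(C(1+L^2)K_0\,\rho^{1/6}\). Setting \(r=\rho d\) yields
\[
\Acal_p(r;x_0,T)\le C(1+L^2)K_0\,(r/d)^{1/6}\longrightarrow0 \quad\text{as } r\downarrow0,
\]
which also gives a quantitative rate.

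The only delicate point is verifying that the rescaled solution fits the setting of \Cref{prop:small}: it must be classical on the open-top cylinder, with all estimates made strictly below \(T\) and then passed to the limit, exactly as that proposition allows. Positivity \(N>0\) comes from the standing assumption \(n_0\not\equiv0\) and the strong maximum principle. Everything else is direct bookkeeping.
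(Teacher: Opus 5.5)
Your proposal is correct and follows the paper's proof essentially step for step. Upper semicontinuity of \(c_*\) gives a fixed backward cylinder \(Q_d(x_0,T)\) on which \(0\le c\le1\). Rescaling at that fixed radius puts you in the setting of \Cref{prop:small}, with \(K_0=\Acal_p(d;x_0,T)<\infty\) and \(L\le\|u\|_{L^\infty(0,T;L^3)}\). Then \eqref{eq:small-Morrey} with \(\rho=r/d\) gives exactly the paper's rate \eqref{eq:zero-rate}.
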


\begin{proof}
By upper semicontinuity, choose \(R>0\), \(R^2<T\), such that
\(0\le c\le1\) in \(Q_R(x_0,T)\).
Use only the parabolic scaling:
\[
\begin{aligned}
N(y,s)&=R^2n(x_0+Ry,T+R^2s),\\
C(y,s)&=c(x_0+Ry,T+R^2s),\\
b(y,s)&=Ru(x_0+Ry,T+R^2s).
\end{aligned}
\]
Then \((N,C,b)\) satisfies \eqref{eq:small-system}.
Here \(K_0=\Acal_p(R;x_0,T)<\infty\) and
\[
\|b\|_{L^\infty_sL^3_y(Q_1)}
\le L_T:=\|u\|_{L^\infty(0,T;L^3(\R^3))}<\infty .
\]
Apply \eqref{eq:small-Morrey} with \(\rho=r/R\):
\begin{equation}\label{eq:zero-rate}
\Acal_p(r;x_0,T)
\le C(1+L_T^2)\Acal_p(R;x_0,T)(r/R)^{1/6},
\qquad 0<r\le2R/3.
\end{equation}
The right-hand side tends to zero.
\end{proof}

\section{Uniform continuation on the whole space}\label{sec:global}

We now assemble the estimates and prove \Cref{thm:main}.
The distinction between a compact set and spatial infinity
is necessary: pointwise regular neighborhoods alone do not
give a uniform whole-space continuation time. The order of choices
below is part of the proof. We first use the fluid budget to choose
one radius at infinity, then use global density integrability to
choose the spatial tail, and only afterwards take a finite interior
cover. This converts the pointwise concentration argument into the
single bounded \(X\) norm required by local well-posedness.

\begin{proof}[Proof of \Cref{thm:main}]
Let \((n,c,u)\) be the maximal mild solution from
\Cref{lem:local}. It is classical under the stated data assumptions.
Suppose first that \(n_0\not\equiv0\), and assume for contradiction
that \(T=T_{\max}<\infty\). Then \(n>0\) for \(t>0\),
\eqref{eq:mass-max} holds, and \Cref{lem:energy,lem:fluid} give all
the preterminal bounds used in Sections~\ref{sec:terminal-normalization}
and~\ref{sec:concentration}. In particular \(n^p\) is integrable
on the whole space-time interval, and the terminal representative
of \Cref{prop:terminal} is available at every spatial point.

\subsection{Pointwise density decay and uniform bounds at infinity}

By \Cref{prop:positive,cor:zero}, every terminal point satisfies
\begin{equation}\label{eq:all-density-decay}
\Acal_p(r;x_0,T)\longrightarrow0,\qquad x_0\in\R^3.
\end{equation}
The representative in \Cref{prop:terminal} is finite everywhere,
so its positive and zero values exhaust all points.
For each fixed \(x_0\), combine this limit with
\eqref{eq:uniform-fluid-tail} and choose a radius on which the sum
in \eqref{eq:epsilon-hyp} is below \(\varepsilon_*\).
\Cref{thm:epsilon} then bounds \(n,\nabla c,u\) on a backward
neighborhood of \((x_0,T)\). It is applicable because all top
times in its proof are approached from below; it does not require
that \((x_0,T)\) was already known to be a regular point.
The radius may depend on \(x_0\), which is why an additional
argument is needed on \(\R^3\).

Let \(G=\|F\|_\infty\) and use the constants
\(\varepsilon_*,C\) of \Cref{thm:epsilon} for \(A,G,p\).
First choose one fixed radius \(0<\rho\le1\), with
\(\rho^2<T/4\), such that
\begin{equation}\label{eq:choose-rho}
\|u\|_{L^5(\R^3\times(T-\rho^2,T))}
+2\|\pi\|_{L^2(T-\rho^2,T;L^3(\R^3))}
<\frac{\varepsilon_*}{2}.
\end{equation}
Next use \(n^p\in L^1(\R^3\times(0,T))\) to choose
\(R_\infty\) so large that
\begin{equation}\label{eq:choose-tail}
\int_0^T\int_{|x|>R_\infty}n^p
<\frac{\varepsilon_*}{2}\rho^\gamma.
\end{equation}
For every \(|x_0|>R_\infty+\rho\), the ball \(B_\rho(x_0)\)
lies in this tail. Thus
\(\Acal_p(\rho;x_0,T)<\varepsilon_*/2\).
Combining \eqref{eq:choose-rho} and \eqref{eq:choose-tail},
\Cref{thm:epsilon} yields the uniform estimate
\begin{equation}\label{eq:infinity-bound}
n(x,t)+|\nabla c(x,t)|^2+|u(x,t)|^2\le C\rho^{-2}
\end{equation}
whenever \(|x|>R_\infty+\rho\) and \(T-\rho^2/4<t<T\).
The order of choices is fixed: first \(\rho\), then
\(R_\infty\).

\subsection{Finite covering and exclusion of a finite maximal time}

For every \(x\) in the compact set
\(K_\infty=\overline{B_{R_\infty+2\rho}}\),
\eqref{eq:all-density-decay} and
\eqref{eq:uniform-fluid-tail} give a radius
\(0<r_x<\min\{1,\sqrt T/2\}\) such that
\eqref{eq:epsilon-hyp} holds in \(Q_{r_x}(x,T)\).
Hence
\[
\sup_{Q_{r_x/2}(x,T)}
 (n+|\nabla c|^2+|u|^2)\le Cr_x^{-2}.
\]
The balls \(B_{r_x/4}(x)\) cover \(K_\infty\).
Take a finite subcover with centers \(x_1,\ldots,x_J\)
and radii \(r_1,\ldots,r_J\).
Let
\begin{equation}\label{eq:cover-time-width}
\delta_T=\frac14\min\{\rho^2,r_1^2,\ldots,r_J^2\}>0.
\end{equation}
If \(x\in K_\infty\), select an index \(j\) with
\(x\in B_{r_j/4}(x_j)\). For \(T-\delta_T<t<T\), the point
\((x,t)\) then lies in \(Q_{r_j/2}(x_j,T)\), because
\(\delta_T\le r_j^2/4\) by \eqref{eq:cover-time-width}. If \(x\notin K_\infty\), estimate
\eqref{eq:infinity-bound} applies. Thus the finite family of
interior estimates and the exterior estimate give
\begin{equation}\label{eq:global-top-bound}
\sup_{T-\delta_T<t<T}
\left(\|n(t)\|_\infty+\|\nabla c(t)\|_\infty^2
+\|u(t)\|_\infty^2\right)<\infty .
\end{equation}

Mass conservation controls \(\|n(t)\|_1\), the maximum
principle controls \(\|c(t)\|_\infty\), and
\Cref{lem:energy} controls \(\|u(t)\|_2\).
Consequently, \eqref{eq:mass-max}, \eqref{eq:energy-budget},
and \eqref{eq:global-top-bound}, substituted into the norm
\eqref{eq:Xnorm}, give
\begin{equation}\label{eq:uniform-restart-bound}
\sup_{T-\delta_T<t<T}\|(n,c,u)(t)\|_X<\infty.
\end{equation}
Let \(M_*\) exceed the uniform bound
\eqref{eq:uniform-restart-bound}. The local existence
time in \Cref{lem:local} can be chosen as one number
\(h_*=h_*(M_*,\|F\|_\infty)>0\), independently of the
restarting time. Choose
\[
T-\min\{\delta_T,h_*/2\}<t_*<T.
\]
The datum \((n,c,u)(t_*)\) belongs to \(X\), with norm at most
\(M_*\). The restarted solution exists on \([t_*,t_*+h_*]\)
and agrees with the original solution on their common interval
by uniqueness in \(C_tX\). Its positive-time regularity makes
it classical across \(T\), since \(T<t_*+h_*\). This contradicts
maximality. Notice that no limiting \(X\)-valued datum at time
\(T\) has been presumed or needed. Hence \(T_{\max}=\infty\).

If \(n_0\equiv0\), then \(n\equiv0\) and
\(u=e^{t\Delta}u_0\). The signal solves a linear transported
heat equation with this smooth bounded velocity. Its maximum
principle and the heat-kernel gradient estimate, iterated on
short intervals, give a finite \(W^{1,\infty}\) bound on every
finite interval. The same continuation argument applies.
This handles the case excluded when positivity of \(n\)
was used in the local entropies.

We have proved global existence and \eqref{eq:mass-max}.
The local construction and continuation give the finite-time
\(X\) bounds. Applying \Cref{lem:energy} and the proof of
\Cref{lem:fluid} on any finite interval also  yields an 
\(L^3\) velocity bound. This proves \eqref{eq:main-bound}
and completes the proof of \Cref{thm:main}.
\end{proof}


\section{Fluid-free consequences and stability under damping}\label{ks:section}

\begin{proof}[Proof of Corollary~\ref{cor:ks}]
Set $F=0$ and $u_0=0$ in Theorem~\ref{thm:main}. The homogeneous
Stokes equation gives $u=0$, and the scalar equations reduce to
\eqref{ks:system}. The stated bounds follow from
\eqref{eq:mass-max} and \eqref{eq:main-bound}.
Uniqueness in $C([0,T];\BUC\times \BUC^1)$ follows from the scalar
mild equations by the heat-semigroup estimates used below.
\end{proof}

\begin{proof}[Proof of Corollary~\ref{cor:stability}]
Fix $0<T<\infty$. By Corollary~\ref{cor:ks}, the undamped
solution exists on $[0,T]$. Choose $K\geq1$ such that
\begin{equation}\label{eq:stability-reference-bound}
 \sup_{0\leq t\leq T}
 \bigl(
   \|n(t)\|_\infty+\|c(t)\|_{W^{1,\infty}}
 \bigr)\leq K.
\end{equation}
Let $T_{\max,\mu}$ denote the maximal existence time of the
solution to \eqref{ks:damped}. The local construction in
$\BUC\times\BUC^1$ is uniform for $\mu\in[0,1]$ on bounded
sets. Indeed, the additional term in the mild density equation is
\[
 -\mu\int_0^t e^{(t-s)\Delta}n_\mu(s)^2\,ds.
\]
For two functions $m_1,m_2$ satisfying
$\sup_{0\leq s\leq h}\|m_i(s)\|_\infty\leq R$, the heat
semigroup contraction gives
\[
 \sup_{0\leq t\leq h}
 \left\|
 \mu\int_0^t e^{(t-s)\Delta}
       \bigl(m_1(s)^2-m_2(s)^2\bigr)\,ds
 \right\|_\infty
 \leq
 2Rh\sup_{0\leq s\leq h}\|m_1(s)-m_2(s)\|_\infty.
\]
The remaining terms are treated by the heat estimates
\eqref{eq:heat-bounds}, exactly as in the scalar part of the
construction in Appendix~\ref{app:local}. Consequently, the
local existence time can be chosen uniformly in $\mu\in[0,1]$
on bounded sets, and
\[
 T_{\max,\mu}<\infty
 \quad\Longrightarrow\quad
 \limsup_{t\uparrow T_{\max,\mu}}
 \bigl(
   \|n_\mu(t)\|_\infty
   +\|c_\mu(t)\|_{W^{1,\infty}}
 \bigr)=\infty.
\]

On the common existence interval, put
$a=n_\mu-n$, $b=c_\mu-c$, and
\begin{equation}\label{eq:stability-distance}
 D(t)=\|a(t)\|_\infty+\|b(t)\|_\infty
      +\|\nabla b(t)\|_\infty.
\end{equation}
Since the initial data agree, $D(0)=0$. Moreover, $D$ is
continuous. Define
\[
 \tau_\mu
 :=
 \sup\left\{
 \tau\in(0,\min\{T,T_{\max,\mu}\}):
 D(s)<1\ \text{for every }s\in[0,\tau]
 \right\}.
\]
Local existence and continuity imply $\tau_\mu>0$.
For $0\leq s<\tau_\mu$, the reference bound
\eqref{eq:stability-reference-bound} gives
\[
 \begin{aligned}
 \|n_\mu(s)\|_\infty+\|c_\mu(s)\|_{W^{1,\infty}}
 &\leq
 \|n(s)\|_\infty+\|c(s)\|_{W^{1,\infty}}+D(s)\\
 &\leq K+1.
 \end{aligned}
\]
In particular, the damped coefficients are bounded independently
of $\mu$ throughout this interval.

Subtracting \eqref{ks:system} from \eqref{ks:damped}
yields
\begin{equation}\label{eq:stability-difference-equations}
 \begin{aligned}
 a_t-\Delta a
 &=-\nabla\cdot(a\nabla c_\mu+n\nabla b)-\mu n_\mu^2,\\
 b_t-\Delta b
 &=-ac_\mu-nb,
 \qquad a(0)=b(0)=0.
 \end{aligned}
\end{equation}
Thus, for $0\leq t<\tau_\mu$,
\[
 \begin{aligned}
 a(t)
 ={}&
 -\int_0^t \nabla\cdot e^{(t-s)\Delta}
       \bigl(a\nabla c_\mu+n\nabla b\bigr)(s)\,ds\\
 &-\mu\int_0^t e^{(t-s)\Delta}n_\mu(s)^2\,ds,\\
 b(t)
 ={}&
 -\int_0^t e^{(t-s)\Delta}
       \bigl(ac_\mu+nb\bigr)(s)\,ds.
 \end{aligned}
\]
Choose a dimensional constant $C_H$ such that, for $r>0$,
\[
 \begin{aligned}
 \|e^{r\Delta}f\|_\infty
 &\leq\|f\|_\infty,\\
 \|\nabla e^{r\Delta}f\|_\infty
 &\leq C_Hr^{-1/2}\|f\|_\infty,\\
 \|\nabla\cdot e^{r\Delta}F\|_\infty
 &\leq C_Hr^{-1/2}\|F\|_\infty.
 \end{aligned}
\]
The coefficient bounds above imply
\[
 \begin{aligned}
 \|a\nabla c_\mu+n\nabla b\|_\infty
 &\leq
 (K+1)\|a\|_\infty+K\|\nabla b\|_\infty
 \leq (K+1)D,\\
 \|ac_\mu+nb\|_\infty
 &\leq
 (K+1)\|a\|_\infty+K\|b\|_\infty
 \leq (K+1)D,\\
 \mu\|n_\mu^2\|_\infty
 &\leq \mu(K+1)^2.
 \end{aligned}
\]
Applying the heat estimates to the mild formulas, we obtain
\[
 \begin{aligned}
 \|a(t)\|_\infty
 &\leq
 C_H(K+1)\int_0^t(t-s)^{-1/2}D(s)\,ds
 +\mu(K+1)^2t,\\
 \|b(t)\|_\infty
 &\leq
 (K+1)\int_0^tD(s)\,ds,\\
 \|\nabla b(t)\|_\infty
 &\leq
 C_H(K+1)\int_0^t(t-s)^{-1/2}D(s)\,ds.
 \end{aligned}
\]
Adding these three inequalities gives
\[
 D(t)
 \leq
 \mu(K+1)^2t
 +(K+1)\int_0^t
 \bigl(1+2C_H(t-s)^{-1/2}\bigr)D(s)\,ds.
\]
Since $t\leq T$, we may take
\[
 A_T=T(K+1)^2,
 \qquad
 B_T=(K+1)\max\{1,2C_H\},
\]
and conclude that
\begin{equation}\label{ks:volterra}
 D(t)\leq A_T\mu
 +B_T\int_0^t
       \bigl(1+(t-s)^{-1/2}\bigr)D(s)\,ds.
\end{equation}
The constants $A_T$ and $B_T$ are independent of $\mu$.

We next estimate \eqref{ks:volterra} by an exponential weight.
Since
\[
 \int_0^T e^{-\omega r}(1+r^{-1/2})\,dr
 \leq
 \omega^{-1}+\sqrt{\pi}\,\omega^{-1/2}
 \longrightarrow0
 \qquad\text{as }\omega\to\infty,
\]
choose $\omega>0$, depending only on $B_T$, such that
\begin{equation}\label{eq:Volterra-weight}
 B_T\int_0^T e^{-\omega r}(1+r^{-1/2})\,dr
 \leq\frac12.
\end{equation}
For $0<t_*<\tau_\mu$, define
\[
 Z(t_*)=\sup_{0\leq t\leq t_*}e^{-\omega t}D(t).
\]
Multiplying \eqref{ks:volterra} by $e^{-\omega t}$, for
$0\leq t\leq t_*$, gives
\[
 \begin{aligned}
 e^{-\omega t}D(t)
 &\leq
 A_T\mu
 +B_T\int_0^t
 e^{-\omega(t-s)}
 \bigl(1+(t-s)^{-1/2}\bigr)
 e^{-\omega s}D(s)\,ds\\
 &\leq A_T\mu+\frac12Z(t_*).
 \end{aligned}
\]
Taking the supremum in $t$ yields
$Z(t_*)\leq2A_T\mu$. Therefore,
\begin{equation}\label{eq:Volterra-conclusion}
 \sup_{0\leq t\leq t_*}D(t)
 \leq 2A_Te^{\omega T}\mu
 =:C_T\mu.
\end{equation}
Choose
\[
 \mu_T=\min\left\{1,\frac{1}{2C_T}\right\}.
\]
For $0\leq\mu\leq\mu_T$, estimate
\eqref{eq:Volterra-conclusion}, valid for every
$t_*<\tau_\mu$, implies
\[
 D(t)\leq\frac12,
 \qquad 0\leq t<\tau_\mu.
\]
If $\tau_\mu<\min\{T,T_{\max,\mu}\}$, continuity would extend
the inequality $D<1$ to a larger interval, contradicting the
definition of $\tau_\mu$. Hence
\[
 \tau_\mu=\min\{T,T_{\max,\mu}\}.
\]
If $T_{\max,\mu}\leq T$, the same estimate would give
\[
 \sup_{0\leq t<T_{\max,\mu}}
 \bigl(
   \|n_\mu(t)\|_\infty
   +\|c_\mu(t)\|_{W^{1,\infty}}
 \bigr)
 \leq K+\frac12,
\]
contradicting the continuation criterion. Consequently,
$T_{\max,\mu}>T$.

Finally, letting $t_*\uparrow T$ in
\eqref{eq:Volterra-conclusion} and using continuity at $T$, we
obtain
\[
 \sup_{0\leq t\leq T}
 \bigl(
   \|n_\mu(t)-n(t)\|_\infty
   +\|c_\mu(t)-c(t)\|_{W^{1,\infty}}
 \bigr)
 \leq C_T\mu.
\]
In particular,
\[
 \sup_{0\leq t\leq T}
 \bigl(
   \|n_\mu(t)\|_\infty
   +\|c_\mu(t)\|_{W^{1,\infty}}
 \bigr)
 \leq K+C_T\mu\leq K+\frac12.
\]
This proves \eqref{ks:stability}. All constants depend only on
$T$ and the reference bound
\eqref{eq:stability-reference-bound}, and are independent of
$\mu\in[0,\mu_T]$.
\end{proof}

\appendix
\section{Local well-posedness and a uniform restart time}
\label{app:local}

We prove \Cref{lem:local} in the space \eqref{eq:Xspace}
with the norm \eqref{eq:Xnorm}.
The purpose of the calculation is to make the dependence of the
lifespan explicit: the contraction uses only the data norm in
\(X\) and \(\|F\|_\infty\), whereas higher regularity of \(F\)
is used later to recover classical derivatives. In particular,
the argument is valid for bounded signals with a nonzero constant
background.

\begin{proof}[Proof of \Cref{lem:local}]
By time translation, it suffices to start at \(t_*=0\); we retain
\(t_*\) in the mild formulas to emphasize the restart property.
Since \(\diver u=0\), the density equation is
\(n_t-\Delta n=-\diver(n(\nabla c+u))\). The three mild formulas
are therefore
\begin{equation}\label{eq:mild}
\begin{aligned}
n(t)&=e^{(t-t_*)\Delta}n(t_*)
-\int_{t_*}^{t}\nabla\cdot e^{(t-s)\Delta}
       [n(\nabla c+u)](s)\dd s,\\
c(t)&=e^{(t-t_*)\Delta}c(t_*)
-\int_{t_*}^{t}e^{(t-s)\Delta}
       [nc+u\cdot\nabla c](s)\dd s,\\
u(t)&=e^{(t-t_*)\Delta}u(t_*)
+\int_{t_*}^{t}e^{(t-s)\Delta}\PP(nF)(s)\dd s.
\end{aligned}
\end{equation}
Here \(\PP\) denotes the Leray projection. In particular, the
sign of the transport in the density flux is the same as that
of the chemotactic flux in this formula.

\smallskip
\noindent\textit{Step 1: the linear bounds.}
For \(q=1,\infty\), the heat kernel gives
\begin{equation}\label{eq:heat-bounds}
\|e^{s\Delta}f\|_q\le\|f\|_q,\qquad
\|\nabla e^{s\Delta}f\|_q\le Cs^{-1/2}\|f\|_q.
\end{equation}
The heat semigroup is strongly continuous on \(L^1\), \(L^2\),
and \(\BUC\), and on \(\BUC^1\) after commuting first derivatives.
For the Stokes term we use
\begin{equation}\label{eq:Stokes-L2}
\|e^{s\Delta}\PP f\|_2\le\|f\|_2,\qquad
\|e^{s\Delta}\PP f\|_\infty\le Cs^{-3/4}\|f\|_2.
\end{equation}
The second estimate follows by splitting the heat semigroup
into two factors and using \(L^2\) boundedness of \(\PP\)
and the heat \(L^2\)-to-\(L^\infty\) bound. We do not require
boundedness of the Leray projection on \(L^\infty\).

\smallskip
\noindent\textit{Step 2: estimates for the fixed-point map.}
Consider the closed ball of radius \(R\) in
\(C([t_*,t_*+h];X)\). The density flux obeys
\begin{equation}\label{eq:local-density-flux}
\|n(\nabla c+u)\|_1+\|n(\nabla c+u)\|_\infty
\le (\|n\|_1+\|n\|_\infty)
    (\|\nabla c\|_\infty+\|u\|_\infty)\le CR^2.
\end{equation}
By \eqref{eq:heat-bounds} and \eqref{eq:local-density-flux},
its contribution to the \(L^1\cap\BUC\) norm is at most
\(CR^2\int_0^h s^{-1/2}\dd s\le C\sqrt h R^2\).
The signal source satisfies
\begin{equation}\label{eq:local-signal-source}
\|nc+u\cdot\nabla c\|_\infty\le CR^2.
\end{equation}
Applying \eqref{eq:heat-bounds} to
\eqref{eq:local-signal-source} and its heat-evolved gradient gives a
\(\BUC^1\) contribution bounded by \(C(h+\sqrt h)R^2\).
No spatial integral of \(c\) is used. Finally,
\begin{equation}\label{eq:local-Stokes-force}
\|nF\|_2\le\|F\|_\infty
                  \|n\|_1^{1/2}\|n\|_\infty^{1/2}
\le C\|F\|_\infty R.
\end{equation}
By \eqref{eq:Stokes-L2} and \eqref{eq:local-Stokes-force},
the fluid contribution is at most
\(C\|F\|_\infty(h+h^{1/4})R\). These bounds also show that
the integral terms tend to zero in \(X\) at the starting time.
Their continuity follows by splitting an integral into a short
terminal part, controlled by the integrable kernel, and an earlier
part, on which the semigroup is strongly continuous.

\smallskip
\noindent\textit{Step 3: difference estimates and contraction.}
For two elements \((n_i,c_i,u_i)\), \(i=1,2\), in the same
ball, the density flux difference has the exact decomposition
\begin{equation}\label{eq:difference-density-flux}
n_1(\nabla c_1+u_1)-n_2(\nabla c_2+u_2)
=(n_1-n_2)(\nabla c_1+u_1)
+n_2\big(\nabla(c_1-c_2)+u_1-u_2\big).
\end{equation}
The \(L^1\) and \(L^\infty\) norms in
\eqref{eq:difference-density-flux} are bounded by \(CR\)
times the \(X\) norm \eqref{eq:Xnorm} of the difference. Likewise,
\begin{equation}\label{eq:difference-signal-source}
\begin{aligned}
n_1c_1-n_2c_2&=(n_1-n_2)c_1+n_2(c_1-c_2),\\
u_1\cdot\nabla c_1-u_2\cdot\nabla c_2
&=(u_1-u_2)\cdot\nabla c_1+u_2\cdot\nabla(c_1-c_2).
\end{aligned}
\end{equation}
The two terms in \eqref{eq:difference-signal-source} have the
same bound in \(L^\infty\).
For the force difference,
\begin{equation}\label{eq:difference-Stokes-force}
\|(n_1-n_2)F\|_2
\le\|F\|_\infty\|n_1-n_2\|_1^{1/2}
                          \|n_1-n_2\|_\infty^{1/2}
\le\frac{\|F\|_\infty}{2}
      (\|n_1-n_2\|_1+\|n_1-n_2\|_\infty).
\end{equation}
Apply \eqref{eq:heat-bounds} and \eqref{eq:Stokes-L2} to
\eqref{eq:difference-density-flux}--\eqref{eq:difference-Stokes-force}.
The fixed-point map then has Lipschitz constant at most
\begin{equation}\label{eq:contraction}
CR(h+\sqrt h)+C\|F\|_\infty(h+h^{1/4}).
\end{equation}
Choose \(R=2(M_*+1)\), increasing the fixed factor if necessary
to bound the linear evolutions, and then take \(0<h\le1\) so
that the nonlinear map preserves the ball and
\eqref{eq:contraction} is at most \(1/2\). Banach's fixed-point
theorem gives existence and uniqueness. Both requirements depend
only on \(M_*\) and \(\|F\|_\infty\). The difference estimates
\eqref{eq:difference-density-flux}--\eqref{eq:difference-Stokes-force}
also yield continuous dependence and uniqueness on longer
bounded intervals by successive applications of this argument.

\smallskip
\noindent\textit{Step 4: classical regularity at positive times.}
On a compact subinterval of \((t_*,T_{\max})\), the constructed
solution has bounded \(n,c,\nabla c,u\). Thus
\[
n_t-\Delta n=-\diver(n(\nabla c+u)),\qquad
c_t-\Delta c=-nc-u\cdot\nabla c
\]
have, respectively, a bounded divergence source and a bounded
scalar source. Interior heat-kernel difference estimates give
parabolic H\"older bounds for \(n\) and \(\nabla c\), with a
fixed small exponent \(0<\alpha<1/4\). Since
\(n\in L^1\cap L^\infty\), \(nF\) is bounded in every finite
\(L^q\). In particular, the Stokes gradient estimate with \(q=6\)
has singularity \((t-s)^{-3/4}\), which is time integrable.
Difference estimates give H\"older control of \(u\) and
\(\nabla u\) on a smaller time interval.

The density flux is now H\"older continuous. Interior
divergence-form Schauder estimates give \(\nabla n\), and the
signal equation gives \(D^2c\) and \(c_t\). Consequently
\[
n_t-\Delta n=-(u+\nabla c)\cdot\nabla n-n\Delta c
\]
has a H\"older right-hand side, so \(n_t,D^2n\) follow from
the scalar Schauder estimate. The Stokes equations give
\(u_t,D^2u,\nabla\pi\) by the corresponding interior estimate.
For the linear parabolic estimates and the classical regularity
used in this bootstrap, see \cite[Chapter~IV]{LSU1968} and
\cite[Section~5.1]{Lunardi1995}. The assumptions on \(F\) suffice for
this finite bootstrap. With the smooth initial data in the
theorem, the solution attains those data and has the usual
classical regularity at positive times.

\smallskip
\noindent\textit{Step 5: positivity, mass, and continuation.}
For nonnegative density data, the scalar maximum principle
applied to the linear equation for \(n\) with the constructed
coefficients preserves nonnegativity. The signal equation then
preserves \(0\le c\le\|c(t_*)\|_\infty\). One may first apply
this argument to smooth approximations on an interval of the
uniform local existence length and pass to the mild solution
by continuous dependence. For the data in the theorem, the
calculation also follows directly on classical time intervals.
The density flux belongs to \(L^1\) at every time and is bounded
there on each local interval. Integrate its mild formula and
use the zero integral of the heat-kernel gradient to obtain
\[
\int_{\R^3}n(t)=\int_{\R^3}n(t_*).
\]
This proves mass conservation without an assumption on
pointwise decay of the derivatives at infinity.

If a finite maximal time had a uniform preterminal \(X\) bound,
choose \(t_*\) within half the uniform existence length of that
time and repeat the construction. Uniqueness joins the new and
old solutions, and positive-time smoothing makes the extension
classical. This contradicts maximality and proves
\eqref{eq:continuation}. The pressure recovered from Stokes is
determined up to a spatial constant depending on time.
\end{proof}

\section*{Declarations}
\begin{itemize}
		\item \textbf{Acknowledgments} The authors would like to thank Professor Zhifei Zhang for some helpful communications. 
		W. Wang was supported by National Key R\&D Program of China (No.2023YFA1009200) and NSFC under grant 12471219. 
		\item \textbf{Conflict of interest} The authors declare that they have no conflict of interest.
		\item \textbf{Data Availability} Data sharing is not applicable to this article as no datasets were generated or analyzed during the current study.
        \item \textbf{Use of artificial intelligence} During the preparation of this work, the authors used ChatGPT (OpenAI) in order to assist with language editing, manuscript organization, and checking mathematical arguments. After using this tool, the authors reviewed and edited the content as needed and take full responsibility for the content of this manuscript.
	\end{itemize}

\providecommand{\bysame}{\leavevmode\hbox to3em{\hrulefill}\thinspace}
\providecommand{\MR}{\relax\ifhmode\unskip\space\fi MR }
\providecommand{\MRhref}[2]{%
  \href{http://www.ams.org/mathscinet-getitem?mr=#1}{#2}
}
\providecommand{\href}[2]{#2}

\end{document}